\documentclass[11pt]{amsart}
\usepackage[T1]{fontenc}
\usepackage{lmodern,microtype}
\usepackage[a4paper,margin=29mm]{geometry}
\usepackage{amssymb,mathtools}
\usepackage[hidelinks]{hyperref}
\hypersetup{
  pdftitle={Sharp bounds and gap phenomena for large p-subgroups of automorphism groups of curves},
  pdfauthor={Saeed Tafazolian},
  pdfkeywords={Automorphism groups of curves, positive characteristic, p-rank, Artin-Schreier curves}
}
\newtheorem{theorem}{Theorem}[section]
\newtheorem{proposition}[theorem]{Proposition}
\newtheorem{lemma}[theorem]{Lemma}
\newtheorem{corollary}[theorem]{Corollary}
\theoremstyle{remark}
\newtheorem{remark}[theorem]{Remark}
\newtheorem*{remark*}{Remark}

\newcommand{\Aut}{\operatorname{Aut}}
\newcommand{\PSL}{\operatorname{PSL}}
\newcommand{\PGL}{\operatorname{PGL}}
\newcommand{\GL}{\operatorname{GL}}
\newcommand{\PSU}{\operatorname{PSU}}
\newcommand{\PGU}{\operatorname{PGU}}
\newcommand{\SU}{\operatorname{SU}}

\newcommand{\Ree}{\operatorname{Ree}}
\newcommand{\F}{\mathbb F}
\newcommand{\PP}{\mathbb P}
\newcommand{\PhiG}{\Phi}

\title[Sharp bounds for large $p$-subgroups]
{Sharp bounds and gap phenomena for large $p$-subgroups of automorphism groups of curves}

\author{Saeed Tafazolian}
\address{Departamento de Matem\'atica, Instituto de Matem\'atica, Estat\'istica e Computa\c{c}\~ao Cient\'ifica (IMECC), Universidade Estadual de Campinas (UNICAMP), Campinas, SP, Brazil}
\email{tafazolian@ime.unicamp.br}
\date{}

\begin{document}

\begin{abstract}
Let $X$ be a curve of genus $g\ge2$ over an algebraically closed field of odd characteristic $p$, and let $G=\Aut(X)$. A theorem of Giulietti and Korchm\'aros shows that if $G$ fixes no point and a $p$-subgroup $S\le G_P$ satisfies
\[
 |S|>\frac{p}{p-1}g,
\]
then $X$ is one of the classical hyperelliptic, Hermitian, or Ree curves. We determine the sharp boundary and the first gap below it. At equality, besides the classical curves, exactly three types occur: the ordinary curves $y^p-y=x+c/x$, the zero-$p$-rank curves $y^3=x^p-x$ for $p\equiv2\pmod3$, and, in characteristic $3$, the ordinary genus-six family
\[
 y^3-y=c\left(x+\frac1{x^3-x}\right).
\]
As a key ingredient, we classify zero-$p$-rank curves whose full automorphism group fixes no point and which admit a $p$-subgroup of order larger than the genus. We then prove that, when $|S|>p$, the boundary classification already follows from the weaker inequality $|S|>p(g-p+1)/(p-1)$. For $|S|\ge p^3$, the forbidden interval above $m=(p-1)|S|/p$ has width at least $p(p-1)/2$, and below $m+(p^2-1)/2$ at most one further genus can occur, with a rigid ramification structure. Finally, we classify the first admissible layer: it occurs only for $|S|=p^2$, and the resulting ordinary curves admit a complete description in terms of two-dimensional Artin--Schreier spaces.
\end{abstract}

\subjclass[2020]{Primary 14H37; Secondary 14G17, 14H45, 20B20}
\keywords{Automorphism groups of curves, positive characteristic, $p$-rank, Artin--Schreier curves, Hermitian and Ree curves}

\maketitle

\section{Introduction}

Let $k$ be an algebraically closed field of characteristic $p>0$, and let $X/k$ be a smooth projective curve of genus $g\ge2$. When $p$ divides the order of the automorphism group, the classical Hurwitz bound no longer controls $|\Aut(X)|$, and the local action of large $p$-subgroups becomes decisive. A natural question is therefore how large a $p$-subgroup of a point stabilizer can be before the curve is forced into one of the classical highly symmetric families.

A theorem of Giulietti and Korchm\'aros \cite[Theorem~1.1]{GK10} gives a sharp starting point for this question. It is stated there for the first ramification group $G_P^{(1)}$; since $G_P^{(1)}$ is the unique Sylow $p$-subgroup of $G_P$, it is equivalent to the formulation below for a $p$-subgroup $S\le G_P$. Restricting to odd characteristic, assume that
\[
 G=\Aut_k(X)
\]
has no global fixed point and that
\begin{equation}\label{eq:GK-strict}
 |S|>\frac{p}{p-1}g.
\end{equation}
Then $X$ is one of three classical families: the hyperelliptic curve $y^2=x^n-x$, the Hermitian curve, or the Ree curve. The aim of this paper is to determine the sharp boundary of this theorem and to describe the first possible genera immediately below it. We first classify the equality case
\[
 |S|=\frac{p}{p-1}g,
\]
and then prove gap results showing that, for larger $p$-subgroups, the same classification persists under strictly weaker numerical hypotheses.

\begin{theorem}\label{thm:boundary}
Let $k$ be algebraically closed of odd characteristic $p$, let $X/k$ have genus $g\ge2$, and put $G=\Aut_k(X)$. Assume that $G$ has no global fixed point on $X$ and that, for some $P\in X$, the stabilizer $G_P$ contains a $p$-subgroup $S$ such that
\begin{equation}\label{eq:closedbound}
 |S|\ge\frac{p}{p-1}g.
\end{equation}
Then one of the following holds.
\begin{enumerate}
\item $X$ is one of the classical odd-characteristic curves in \cite[Theorem~1.1]{GK10}: the hyperelliptic curve $y^2=x^n-x$ with $n=p^a\ge5$, the Hermitian curve, or the Ree curve.
\item Equality holds in \eqref{eq:closedbound}, $g=p-1$, and
\[
 X:\quad y^p-y=x+\frac{c}{x},\qquad c\in k^*.
\]
This curve is ordinary.
\item Equality holds, $p\equiv2\pmod3$, $g=p-1$, and
\[
 X:\quad y^3=x^p-x.
\]
This curve has $p$-rank zero.
\item $p=3$, equality holds, $g=6$, $|S|=9$, and
\[
 X:\quad y^3-y=c\left(x+\frac1{x^3-x}\right),\qquad c\in k^*.
\]
This curve is ordinary.
\end{enumerate}
Conversely, each curve in \emph{(2)--(4)} realizes equality in \eqref{eq:closedbound} for a $p$-subgroup fixing a point, while its full automorphism group has no global fixed point.
\end{theorem}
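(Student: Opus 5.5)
The strict case $|S|>\frac{p}{p-1}g$ is exactly \cite[Theorem~1.1]{GK10} and gives (1). So assume equality $|S|=\frac{p}{p-1}g$, i.e.\ $g=(p-1)|S|/p$, and write $|S|=p^h$. The analysis will go through the quotient $Y=X/S$ and the Riemann--Hurwitz/Hilbert different formula at the totally ramified point $P$.

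\emph{Step 1: $Y$ and the other ramification.} We have $2g-2=|S|(2g_Y-2)+d_P+\sum_{Q\ne P}d_Q$, where $d_P\ge 2(|S|-1)$ by Hilbert's formula, since the first ramification group at $P$ is all of $S$. With $g=(p-1)|S|/p$ this forces $g_Y=0$ and leaves very little room: $\sum d_Q=2g-2+2|S|-d_P\le 2g$. I would split according to the $p$-rank $\gamma$, computed by Deuring--Shafarevich: $\gamma-1=|S|(-1)+\sum(|S_Q|-1)$ summed over the $S$-orbits.

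\emph{Step 2: the zero $p$-rank case.} If $\gamma=0$, then $P$ is the only ramified point of $X\to X/S$. We have $|S|>g$, and $G$ fixes no point, so the key ingredient announced in the abstract applies: the classification of zero-$p$-rank curves with no global fixed point and a $p$-subgroup of order exceeding the genus. Intersecting that list with $|S|=\frac{p}{p-1}g$ should leave only $y^3=x^p-x$ with $p\equiv2\pmod 3$ and $g=p-1$, together with possibly classical curves, which are excluded by equality. For this curve the $p$-rank is zero, and the relevant condition is that the order-3 automorphisms together with $x\mapsto x+a$ and the extra symmetries do not fix $\infty$.

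\emph{Step 3: $\gamma>0$.} Here some other $S$-orbit is ramified. Equality together with $d_P\ge2(|S|-1)$ and the analogous bound at each further ramified orbit forces exactly one extra orbit, with specified stabilizer order and minimal conductor. Concretely, either $S$ is cyclic of order $p$ with two totally ramified points, each of conductor $1$ (so $g=p-1$), or the extra orbit has stabilizers of index $p$ with $d_Q=2(|S_Q|-1)$. In the first subcase the Artin--Schreier theory of $\mathbb{Z}/p$-covers of $\PP^1$ ramified at $0$ and $\infty$ with simple poles gives $y^p-y=ax+b/x$, which normalizes to $x+c/x$. For the second subcase I expect a group-theoretic argument to bound $|S|\le p^2$ and, when $p$ is odd, to force $p=3$. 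The argument would use that $G$ must move $P$ to a point of the other orbit, and that the two orbits give matching local structures, so $|S|=|S_Q|\cdot p$ must be balanced. With $|S|=9$ elementary abelian this yields an $(\mathbb{Z}/3)^2$-cover of $\PP^1$, whose $\F_3$-space of Artin--Schreier functions is spanned by $x$ and $1/(x^3-x)$ up to change of coordinates, giving (4). This step, ruling out $|S|\ge p^3$ and $p>3$ in the second subcase, is the main obstacle. I would first try to exploit a second point stabilizer $G_Q\ge S'$ conjugate to $S$, and count its fixed points against the orbit sizes.

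\emph{Converse.} For (2), (3) and (4), compute the genus via Riemann--Hurwitz and exhibit an automorphism not fixing $P$. For (2) this is $x\mapsto c/x$, which swaps $0$ and $\infty$. For (4) it is the map induced by $x\mapsto -1/x$, which permutes the poles. The ordinarity claims and the $p$-rank zero claim follow from Deuring--Shafarevich.
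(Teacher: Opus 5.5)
\textbf{What matches the paper.} The strict case, the Hurwitz dichotomy at equality, the zero-$p$-rank branch and the case $|S|=p$ all follow the paper. In the dichotomy, either there is no further ramified orbit (so the $p$-rank is zero), or there is exactly one further orbit with minimal differents. The zero-$p$-rank branch goes via Theorem~\ref{thm:zerorank}, where $q-1=\frac{p-1}{p}q$ forces $q=p$. The case $|S|=p$ goes via reduced Artin--Schreier theory. The converses for (2) and (4) are also as in the paper. For (3) you must still name a lift of a projectivity moving $\infty$, e.g.\ $(x,y)\mapsto(-1/x,\,y/x^{(p+1)/3})$; the deck and translation automorphisms you mention all fix the point over $\infty$.

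\textbf{The gap.} The ordinary case with $|S|>p$ is left as an expectation, and it carries all the content of (4). The route you sketch would not close it.
\begin{itemize}
\item Your set-up is off. Equality in Hurwitz forces $|S_Q|=p$ and $|\Delta|=|S|/p$: the stabilizers have \emph{order} $p$, not index $p$. The two agree only when $|S|=p^2$, so ``$|S|=|S_Q|\cdot p$'' is wrong for $|S|\ge p^3$.
\item Fixed-point and orbit counting cannot bound $|S|$. Suppose one knows that $G$ permutes $\{P\}\cup\Delta$, that $G/S_R\cong\PSL(2,n)$ with $n=|S|/p$ acts on $\PP^1(\F_n)$, and that $S/S_R$ acts by translations. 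This configuration is numerically consistent for every $n$: a $C_p$-cover of $\PP^1$ weakly ramified at $n+1$ points has exactly the boundary genus $n(p-1)$. The obstruction is arithmetic.
\item Your description of (4) is wrong. The $(\mathbb Z/3)^2$-cover of the $x$-line with Artin--Schreier space spanned by $x$ and $1/(x^3-x)$ has genus $0+4+6+6=16$. The genus-six curve is the \emph{cyclic} cover $X\to X/L$ of the $x$-line, with $S/L$ acting by $x\mapsto x+t$.
\end{itemize}

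\textbf{The missing argument.}
\begin{enumerate}
\item Since $S_P^{(2)}=1$, $S$ is elementary abelian. Hence $L=S_R$ with $R\in\Delta$ fixes all of $\{P\}\cup\Delta$.
\item Hurwitz gives $X/L\cong\PP^1$, with $S/L$ acting by translations by an additive group $W$ of order $n$.
\item These translations lift to elements commuting with the deck group. Comparing simple principal parts gives the model $y^p-y=f(x)=\alpha x+c/A_W(x)$, where $A_W(x)=\prod_{w\in W}(x-w)$.
\item Castelnuovo--Severi, with $g=n(p-1)>(p-1)^2$, gives $L\lhd G$. This is what justifies your claim that $G$ moves $P$ into $\Delta$. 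So $H=G/L\le\PGL(2,k)$ preserves $W\cup\{\infty\}$, and since $G$ fixes no point, $H$ is doubly transitive there.
\item Check that $S$ is Sylow, via the normalizer condition and the strict form of \cite[Theorem~1.1]{GK10}. Dickson's classification then gives $W=\F_n$ and $H\cong\PSL(2,n)$ or $\PGL(2,n)$.
\item The decisive step is lifting $x\mapsto-1/x$. If the lift acts on $L$ by $\lambda\in\F_p^*$, then $f(-1/x)-\lambda f(x)$ must be an Artin--Schreier coboundary. Its residue $-c(w^2-\lambda)$ at every $w\in\F_n^*$ must vanish, so all squares in $\F_n^*$ coincide. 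This forces $n=p=3$, $\lambda=1$ and $c=\alpha$.
\end{enumerate}
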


Thus the strict bound of \cite{GK10} can be replaced by a non-strict one once the equality cases are added to the list. The ordinary genus-$p-1$ family in Theorem~\ref{thm:boundary}(2) is the classical Kloosterman-type Artin--Schreier family studied by van der Geer--van der Vlugt \cite{vdGvdV}.

\begin{remark*}[Relation with \cite{GK17}]
Giulietti and Korchm\'aros \cite[Theorem~1.1]{GK17} proved that if a $p$-subgroup $S$ satisfies $|S|>\frac{p^2}{p^2-p-1}(g-1)$, then one of three things happens:
\begin{itemize}
\item $X$ has $p$-rank zero and $X\to X/S$ is totally ramified at a unique place and unramified elsewhere;
\item $|S|=p$ and $X$ is ordinary of genus $p-1$;
\item $X$ is ordinary and Nakajima-extremal, and there is a subgroup $M<S$ of index $p$ such that $X\to X/M$ is unramified and $X/M$ is an ordinary curve of genus $p-1$.
\end{itemize}
Their Remarks~2.2--2.3 discuss the curves $y^p-y=ax+1/x$, their automorphism group, and the exceptional curve in characteristic $3$ with automorphism group $\GL(2,3)$.

On the boundary $g=\frac{p-1}{p}q$, where $q=|S|$, one has
\[
 q>\frac{p^2(g-1)}{p^2-p-1}\iff q<p^2 .
\]
Hence for $q=p$, the zero-$p$-rank or ordinary dichotomy in Proposition~\ref{prop:rank-dichotomy} follows from \cite[Theorem~1.1]{GK17}, since all its positive-$p$-rank alternatives give an ordinary curve of genus $p-1$. In particular, Nakajima-extremality gives $p=\frac{p}{p-2}(\gamma-1)$ and hence $\gamma=p-1=g$. In the ordinary case, the two fixed points and the rational quotient follow from Deuring--Shafarevich and Hurwitz, and reduced Artin--Schreier theory gives the Kloosterman family of Proposition~\ref{prop:ordinary-p}. Proposition~\ref{prop:kloosterman-aut} gives a self-contained treatment of the automorphism groups discussed in \cite[Remarks~2.2--2.3]{GK17} and adds the isomorphism criterion.

For $q\ge p^2$ and $g\ge m=(p-1)q/p$, one has
\[
 \frac{p^2(g-1)}{p^2-p-1}
 \ge \frac{p(p-1)q-p^2}{p^2-p-1}
 \ge q,
\]
so the strict numerical hypothesis of \cite[Theorem~1.1]{GK17} fails. Thus the genus-six boundary family and the regions $g>m$ studied in Theorems~\ref{thm:gap}--\ref{thm:firstlayer} lie outside its range.
\end{remark*}

The point here is the complete description of the boundary under the hypothesis that the full automorphism group has no global fixed point. The proof separates naturally according to the $p$-rank. At the boundary, the Deuring--Shafarevich formula forces a dichotomy: either the $p$-rank is zero or the curve is ordinary. The zero-$p$-rank part is supplied by the following stronger result.

The zero-$p$-rank branch belongs naturally to the Harbater--Katz--Gabber setting. We use the extra-automorphism theorem of Bleher--Chinburg--Poonen--Symonds \cite[Theorems~1.8 and~1.11]{BCPS}. Their Example~6.2 already contains the curves $z^n=t^q-t$ with $n\mid(q+1)$, including our cubic family.

Their theorem treats solvable groups. For nonsolvable groups, \cite[Remark~7.15]{BCPS} refers to the structural theorem of Guralnick--Malmskog--Pries \cite[Theorem~3.16]{GMP}. Its geometric consequence, \cite[Theorem~3.19]{GMP}, applies in our odd-characteristic, zero-$p$-rank setting when a Sylow $p$-subgroup $Q$ is noncyclic and $N_G(Q)\ne G$: it gives a normal $p'$-subgroup $M=Z(N_G(Q))$ such that $G/M$ is almost simple with socle in an explicit list. The underlying group-theoretic proof uses the classification of finite simple groups.

Our argument instead follows the orbit method of \cite{GK10}. It treats the cyclic-Sylow cases as well, and it determines the genus and an explicit model.

The same type of cover appears in the theory of big actions. Lehr--Matignon \cite{LM05} and Matignon--Rocher \cite{MR08} study pairs $(X,S)$ with $S$ a $p$-group and $|S|/g>2p/(p-1)$; then $S$ fixes a point and $X\to X/S\cong\PP^1$ is ramified only there. Theorem~\ref{thm:zerorank} concerns such covers in the weaker range $|S|>g$, under the global hypothesis that $\Aut(X)$ fixes no point. For zero $p$-rank as a consequence of large automorphism groups, see also \cite{GK19}; for the characteristic-$2$ zero-rank case, see \cite{GKjlms}.

The cubic family is also a classical Hermitian quotient when $3\mid(q+1)$; see, for example, \cite{GSX}. None of these curves is claimed to be new. The new statements are:
\begin{itemize}
\item the boundary classification for $|S|\ge p^2$ (for $|S|=p$ see the Remark above);
\item Theorem~\ref{thm:zerorank} and the automorphism groups of the cubic and genus-six families;
\item the isomorphism criterion for the Kloosterman family;
\item the numerical gaps below the boundary and the structure theorem beyond the refined gap;
\item the first-layer classification.
\end{itemize}

Ordinary curves with large automorphism groups have been studied by Korchm\'aros--Montanucci \cite{KM19} and Montanucci--Speziali \cite{MS}. The Artin--Schreier curves $y^q-y=x+c/x$, which contain our first-layer family, appear as extremal examples in \cite{KM19}. Our first-layer theorem starts instead from the numerical condition above and the hypothesis that the full automorphism group has no global fixed point, forces $|S|=p^2$, and classifies the corresponding rank-two Artin--Schreier spaces.

\begin{theorem}\label{thm:zerorank}
Let $k$ be algebraically closed of odd characteristic $p$, and let $X/k$ have genus $g\ge2$ and $p$-rank zero. Put $G=\Aut_k(X)$, and let $S\le G_P$ be a $p$-subgroup for some $P\in X$. Assume that $|S|>g$ and that $G$ has no global fixed point on $X$. Then $X$ is $k$-isomorphic to the smooth projective model of one of the following curves:
\begin{enumerate}
\item $y^2=x^n-x$, where $n=p^a\ge5$;
\item the Hermitian curve $y^n+y=x^{n+1}$, where $n=p^a\ge3$;
\item the Ree curve, where $p=3$, $n=3n_0^2$, and $n_0=3^a$;
\item $y^3=x^q-x$, where $q=p^a\equiv2\pmod3$.
\end{enumerate}
Conversely, all these curves satisfy the hypotheses. In case \emph{(4)}, $g=q-1$, $|S|=q$, and
\[
 \Aut_k(X)\cong C_3\times\PGL(2,q).
\]
\end{theorem}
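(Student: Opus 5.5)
We follow the orbit method of \cite{GK10}. Since $X$ has $p$-rank zero, every nontrivial $p$-element of $G$ has exactly one fixed point. Hence $S$, and the full Sylow $p$-subgroup $Q$ of $G_P$ containing it, fixes only $P$. Distinct points with nontrivial $p$-stabilizer therefore have Sylow $p$-subgroups meeting trivially, and $G$ acts on the set $\Omega$ of such points.

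The first step is local. Put $\bar X=X/Q$. Because $Q$ is a $p$-group fixing a single point and $X$ has $p$-rank zero, Deuring--Shafarevich gives $\gamma(\bar X)=0$, so $\bar X\cong\PP^1$. Riemann--Hurwitz then reads $2g-2=-2|Q|+d_P$, where $d_P=\sum_{i\ge0}(|G_P^{(i)}|-1)$. In particular the cover $X\to X/Q$ is totally ramified at $P$ and unramified elsewhere; this is the Harbater--Katz--Gabber situation. The hypothesis $|S|>g$ gives $|Q|>g$, and this bounds the higher ramification jumps: writing $d_P\le 2|Q|+2g-2<4|Q|$, the lower ramification filtration of $Q$ can have only very few nontrivial layers beyond $G_P^{(1)}$.

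The second step is global. Since $G$ fixes no point, $|\Omega|\ge2$, and $G$ acts transitively on $\Omega$. Otherwise, Riemann--Hurwitz for $X\to X/G$ with two short orbits each contributing about $|G|(2-\text{small})$ would contradict $|Q|>g$, exactly as in \cite{GK10}. Then $G$ acts on $\Omega$ as a permutation group in which the stabilizer $G_P$ contains a normal Sylow $p$-subgroup $Q$ acting regularly on $\Omega\setminus\{P\}$. This follows because a nontrivial element of $Q$ fixes only $P$, and a counting argument with Riemann--Hurwitz for $X\to X/G$ (using $|Q|>g$) forces $|\Omega|=|Q|+1$. So $G^{\Omega}$ is $2$-transitive with a regular normal $p$-subgroup on the point stabilizer. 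By the Kantor--O'Nan--Seitz / Hering classification, as used in \cite{GK10}, the group $G^\Omega$ has socle $\PSL(2,n)$, $\PSU(3,n)$, $\Ree(n)$, or a Suzuki group. The Suzuki case is excluded since $p$ is odd. The kernel $M$ of the action on $\Omega$ is a cyclic $p'$-group, because it fixes $P$ and has trivial intersection with $Q$.

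The third step is identification. We compute the genus of $X/M$ from the known action of the socle on its natural $2$-transitive set, which fixes the ramification data. Then we apply the existing characterizations: the Hermitian and Ree curves are determined by their automorphism groups together with genus (\cite{GK10}), and the rational case $X/M\cong\PP^1$ with $G^\Omega\supseteq\PSL(2,q)$ is handled directly. In that last case $X\to X/M=\PP^1$ is a cyclic cover of degree $|M|$ branched exactly over the $q+1$ points of $\PP^1(\F_q)$, with all ramification indices equal to $|M|$. By Kummer theory the model is $y^{|M|}=x^q-x$, up to the normalization of the point at infinity; this requires $|M|\mid q+1$ for the branching at infinity to be total. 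The genus is $(|M|-1)(q-1)/2$, and the condition $q>g$ then gives $|M|\in\{2,3\}$. For $|M|=2$ we obtain the hyperelliptic curve, with $n=q\ge5$ for $g\ge2$. For $|M|=3$, total ramification at infinity requires $3\mid q+1$, giving case (4) with $g=q-1$.

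For the converse we use known automorphism groups, and for case (4) we show $\Aut(X)=C_3\times\PGL(2,q)$ by observing that this group already acts and is the normalizer supplied by the argument above. The main obstacle is the second step: proving transitivity on $\Omega$ and the exact equality $|\Omega|=|Q|+1$ from the weak inequality $|Q|>g$ rather than the stronger bound of \cite{GK10}. We expect to handle this by a careful case split on the number of short $G$-orbits in Riemann--Hurwitz.
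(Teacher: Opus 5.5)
Your skeleton matches the paper's: unique $p$-fixed points, double transitivity on $\Omega$ with $|\Omega|=q+1$, Kantor--O'Nan--Seitz, and a cyclic kernel of order $t\le3$. The identification step, however, has a genuine gap. The $2$-transitive action on $\Omega$ does not fix the ramification data. Hurwitz only gives $2g-2=(q+1)(h/w-1)$, where $w$ is the stabilizer order of a possible extra tame short orbit, and $w$ is not determined by the group. So one socle allows several genera. In the range $\frac{p-1}{p}q\le g<q$, which is exactly what $|S|>g$ adds to \cite{GK10}, some of these genera are not hyperelliptic, Hermitian or Ree genera. Hence no characterization ``by group and genus'' can be invoked.

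The paper applies \cite[Theorem~1.1]{GK10} when $q>\frac{p}{p-1}g$, and then proves that the faithful case $K=1$ cannot occur in this strip. That needs arguments absent from your plan:
\begin{itemize}
\item Faithful $\PSL(2,q)$ or $\PGL(2,q)$: you omit this case entirely, since you treat only $X/M\cong\PP^1$. Irreducibility of the complement on $S$ makes every nontrivial lower group equal to $S$, and the strip then forces the jump $3$ and $g=q-1$. An Artin--Schreier computation then gives $X\cong\{y^3=x^q-x\}$; when $q=5$ this needs a trigonal-pencil argument to kill $c$ in $v^5-v=u^3+cu$. But that curve's full group has kernel $C_3$, a contradiction.
\item $\PSU(3,5)$: the value $w=3$ gives $g=106\in[100,125)$. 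It is excluded only by analysing the ramification filtration of the extraspecial group $5^{1+2}$ (Lemma~\ref{lem:106}).
\end{itemize}

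Several other steps are asserted rather than proved.
\begin{itemize}
\item For $t=2$, nothing in $q>g$ forces $X/M\cong\PP^1$; you only get $g=2g(X/M)+\frac{q-1}2$. The paper excludes $g(X/M)>0$ by jump counting in the linear case, by splitting the central extension in the unitary and Ree cases, and by a separate count for $\Ree(3)$. Your outline never addresses kernels over unitary or Ree quotients.
\item The classification you quote also has a regular-normal-subgroup (solvable) branch. The paper removes it by proving $G$ nonsolvable via \cite{BCPS}.
\item For $t=3$, Kummer theory gives $y^3=\prod(x-a_i)^{e_i}$. The exponents are all equal only because the perfect group $\PSL(2,q)$ centralizes $K$.
\item Your derivation of $X/Q\cong\PP^1$ is wrong as stated, since $p$-rank zero does not imply genus zero. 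Use Hurwitz instead: $g(X/Q)\ge1$ would give $g\ge|Q|$.
\item The step you flag as the main obstacle is not settled by counting short orbits alone. The paper also uses that a nontrivial tame element of $G_P$ has at most two fixed points on $X/S\cong\PP^1$, which forces a single $S$-orbit on $\Omega\setminus\{P\}$.
\item In case~(4), the bound $|\Aut(X)|\le3|\PGL(2,q)|$ needs a real argument. One option is uniqueness of the trigonal pencil: Castelnuovo--Severi for $q>5$, and the canonical quadric cone for $q=5$.
\end{itemize}
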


\begin{corollary}\label{cor:zerogap}
Under the hypotheses of Theorem~\ref{thm:zerorank}, if $X$ is not one of the three classical families, then $|S|=g+1$ and $|S|\equiv2\pmod3$. In particular, $|S|\ge g+2$ forces one of the classical families.
\end{corollary}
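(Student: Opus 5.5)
The corollary should follow directly from Theorem~\ref{thm:zerorank}, with no new geometric input. Assume the hypotheses of Theorem~\ref{thm:zerorank} and that $X$ is not one of the three classical families (1)--(3). By the classification in Theorem~\ref{thm:zerorank}, $X$ is then $k$-isomorphic to $y^3=x^q-x$ for some $q=p^a\equiv2\pmod 3$. The same theorem gives $g=q-1$ and $\Aut_k(X)\cong C_3\times\PGL(2,q)$.

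The one point that needs care is that the statement ``$|S|=q$'' in case (4) must hold for the given subgroup $S$, and not only for some particular $p$-subgroup. I would argue directly from the group order. We have $|C_3\times\PGL(2,q)|=3q(q^2-1)$, and $p\neq3$ because $q\equiv2\pmod3$. Hence the $p$-part of $|G|$ is exactly $q$, so every $p$-subgroup $S\le G$ satisfies $|S|\le q$. Combined with the hypothesis $|S|>g=q-1$, this gives $|S|=q=g+1$. Since $q\equiv2\pmod3$, we get $|S|\equiv2\pmod3$.

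For the final assertion, suppose $|S|\ge g+2$. Then $|S|\neq g+1$, so by the previous paragraph $X$ cannot be the cubic curve of case (4). Since the hypotheses of Theorem~\ref{thm:zerorank} hold, $X$ must be one of the classical families (1)--(3).

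I expect no real obstacle here. The only step that is more than bookkeeping is the Sylow bound, which replaces any appeal to how $S$ was chosen in the proof of Theorem~\ref{thm:zerorank}.
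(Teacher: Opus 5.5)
Your proposal is correct and follows essentially the paper's proof. The corollary is read off from case~(4) of Theorem~\ref{thm:zerorank}, where $|S|=q=g+1$ with $q\equiv2\pmod 3$. Your explicit Sylow-order check via $|C_3\times\PGL(2,q)|=3q(q^2-1)$ with $p\ne3$ is the same containment-in-a-Sylow-subgroup argument the paper gives at the end of its proof of that theorem.
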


There is also a genuine numerical gap immediately below the boundary of Theorem~\ref{thm:boundary}.

\begin{theorem}[Gap theorem]\label{thm:gap}
Let $k$ be algebraically closed of odd characteristic $p$, let $X/k$ have genus $g\ge2$, and put $G=\Aut_k(X)$. Assume that $G$ has no global fixed point and that $G_P$ contains a $p$-subgroup $S$ of order $q>p$. If
\begin{equation}\label{eq:gap-bound}
 q>\frac{p}{p-1}\bigl(g-(p-1)\bigr),
\end{equation}
then in fact
\[
 q\ge\frac{p}{p-1}g.
\]
Consequently $X$ is one of the classical curves in \cite[Theorem~1.1]{GK10}, or
\[
 p=3,\qquad q=9,\qquad g=6,
\]
and
\[
 X:\quad y^3-y=c\left(x+\frac1{x^3-x}\right),\qquad c\in k^*.
\]
Equivalently, if $m=(p-1)q/p$, then no such curve exists with
\[
 m<g<m+(p-1).
\]
\end{theorem}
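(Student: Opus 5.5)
We follow the orbit method of \cite{GK10}, now keeping track of the error terms. Write $q=|S|=p^h$ with $h\ge2$, and let $\bar X=X/S$ have genus $\bar g$. Since $S\le G_P$ lies in the first ramification group at $P$, the Hurwitz formula reads
\[
2g-2=q(2\bar g-2)+d_P+\sum_{Q\ne P}d_Q ,
\]
where every $d_Q\ge (|S_Q|-1)\cdot 2$ at wild points.

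\emph{Step 1: $\bar X\cong\PP^1$.} The hypothesis \eqref{eq:gap-bound} gives $g<m+(p-1)$ with $m=(p-1)q/p$. First we show $\bar g=0$. If $\bar g\ge1$, then $2g-2\ge d_P\ge 2(q-1)$, so $g\ge q$. This contradicts $g<(p-1)q/p+p-1$ as soon as $q/p>p-1$, which holds because $q\ge p^2$.

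\emph{Step 2: Hurwitz bookkeeping.} With $\bar g=0$, let $\Omega_1=\{P\},\Omega_2,\dots$ be the short $S$-orbits besides $P$. Consider first the case that $P$ is the only ramified point. Then $2g-2=-2q+d_P$. Let $i_0+1=|S|\cdot$(lower filtration length), so that $d_P=\sum_i(|S_i|-1)$ with $S_0=S_1=S$. Hence $d_P\ge 2(q-1)+(|S_2|-1)+\cdots$, which gives $g\ge(|S_2|-1)/2+\cdots$. This bound alone is too weak, so we use the Deuring--Shafarevich formula together with the global hypothesis instead.

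\emph{Step 3: the $p$-rank.} The Deuring--Shafarevich formula gives
\[
\gamma-1=q(-1)+\sum_{Q}(e_Q-1),
\]
with the sum over the ramified points. Since $G$ fixes no point, some $\sigma\in G$ moves $P$. Then $S^\sigma\le G_{\sigma P}$ gives another large $p$-subgroup, and the orbit argument of \cite{GK10} shows that either $S$ has a second short orbit, or $G_P^{(1)}$ acts on the $G$-orbit of $P$ as a doubly transitive group.

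\begin{itemize}
\item In the first case $\gamma\ge q-1+(q/|S_Q|)(|S_Q|-1)-q\cdot0$. The Hurwitz contribution $2g-2\ge -2q+2(q-1)+(q/|S_Q|)\cdot 2(|S_Q|-1)$ then gives $g\ge q-q/|S_Q|\ge (p-1)q/p=m$. Equality requires $|S_Q|=p$ and no higher ramification, and otherwise the next attainable value is at least $m+(p-1)$, because every extra term in $d_P$ or $d_Q$ contributes a multiple of $|S_i|-1\ge p-1$ (after Hilbert's parity, in half-units, $(p-1)$). This is exactly the forbidden interval.
\item In the second case, with $P$ the only ramified point, the curve has $p$-rank zero and $g<m+p-1<q$. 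Theorem~\ref{thm:zerorank} then applies and yields the classical curves or $y^3=x^q-x$ with $g=q-1\ge m+(p-1)$ for $q\ge p^2$. In that case \eqref{eq:gap-bound} fails, so this case is excluded.
\end{itemize}

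\emph{Step 4: conclusion.} Once $q\ge\frac p{p-1}g$ is established, Theorem~\ref{thm:boundary} gives the classification. Its items (2) and (3) require $g=p-1$ and $q=p$, contrary to $q>p$, so only the classical curves and item (4) remain.

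The main obstacle is Step 3's integrality claim: that any deviation from the minimal ramification data raises $g$ by at least $p-1$. Here we would examine the lower ramification jumps at $P$ and at the points of the second orbit. Each jump $i$ contributes $|S_i|-1$, which is divisible by $p-1$ since $|S_i|$ is a power of $p$. Hasse--Arf-type congruences on the jumps then rule out the contributions that are too small.
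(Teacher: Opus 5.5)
\textbf{The gap is in your first case} (a second short $S$-orbit, i.e.\ positive $p$-rank). It sits exactly at the step you call the main obstacle.

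Your claim that every departure from the minimal ramification data raises $g$ by at least $p-1$ is false. A term $|S_P^{(i)}|-1\ge p-1$ enters $2g-2$, so it raises $g$ only by $\frac{p-1}2$.

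Concretely, take:
\begin{itemize}
\item $\bar g=0$;
\item one extra orbit $\Delta$ of length $q/p$, weakly ramified at its points;
\item at $P$ the filtration $S_P^{(1)}=S$, $S_P^{(2)}=Z\cong C_p$, $S_P^{(3)}=1$.
\end{itemize}
Hurwitz gives $2g-2=2m-2+(p-1)$, i.e.\ $g=m+\frac{p-1}2$, and Deuring--Shafarevich gives $\gamma=m$. This lies strictly inside $(m,m+p-1)$. Since $p$ is odd, no parity argument removes it.

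The remaining deviations do push $g$ to at least $m+p-1$. These are a second extra orbit, stabilizers of order at least $p^2$, higher ramification on $\Delta$, $|S_P^{(2)}|\ge p^2$, or $S_P^{(3)}\ne1$. So this single configuration is the whole content of the theorem on the positive-rank side, and your proposal leaves it unproved. ``Hasse--Arf-type congruences'' do not settle it as stated: Hasse--Arf needs $S$ abelian, which you never establish. The divisibility of $|S_i|-1$ by $p-1$ is beside the point.

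\textbf{How the paper closes it.} Proposition~\ref{prop:local}(a) compares $Z$ with $L=S_R$ for $R\in\Delta$.
\begin{itemize}
\item If $Z\ne L$, then $X\to X/L$ is weakly ramified everywhere. Subtracting the Deuring--Shafarevich formula from Hurwitz gives $g-\gamma=p(g_L-\gamma_L)$. This is impossible because $g-\gamma=\frac{p-1}2$.
\item If $Z=L$, then $X/Z\cong\PP^1$ with branch set $W\cup\{\infty\}$. The reduced Artin--Schreier function has a pole of order $2$ at $\infty$ and translation-invariant finite polar part. Lifting a translation $x\mapsto x+v$ would make the nonconstant linear polynomial $f(x+v)-f(x)$ an Artin--Schreier coboundary, which is impossible.
\end{itemize}

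Your Hasse--Arf idea can also be completed, with one extra step. By \cite[Chapter~IV, Proposition~10]{Serre}, $[S_1,S_1]\subseteq S_3=1$, so $S$ is abelian. Then the lower jump $2$ has upper number $\varphi(2)=1+p/q\notin\mathbb Z$, because $q\ge p^2$, contradicting Hasse--Arf.

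\textbf{Two smaller repairs.}
\begin{itemize}
\item In the zero-rank branch you must check that the hyperelliptic, Hermitian and Ree curves satisfy $g\le m$ for every $p$-subgroup of order $q>g$; the paper does this family by family. Otherwise $q\ge\frac p{p-1}g$ is not proved for them.
\item In Step~3, the appeal to an element moving $P$ and to a doubly transitive action of $G_P^{(1)}$ is unnecessary, and as phrased it is wrong, since $G_P^{(1)}$ fixes $P$. The dichotomy is simply $\gamma=0$ versus $\gamma>0$, read off from Deuring--Shafarevich once $\bar\gamma=0$.
\end{itemize}
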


\begin{theorem}[Refined gap for higher $p$-power order]\label{thm:refinedgap}
Let $k$ be algebraically closed of odd characteristic $p$, let $X/k$ have genus $g\ge2$, and put $G=\Aut_k(X)$. Assume that $G$ has no global fixed point on $X$ and that, for some $P\in X$, the stabilizer $G_P$ contains a $p$-subgroup $S$ of order $q\ge p^3$. Put
\[
 m=\frac{p-1}{p}q.
\]
Then no such curve exists with
\[
 m<g<m+\frac{p(p-1)}2.
\]
Equivalently, if
\[
 q>\frac{p}{p-1}
 \left(g-\frac{p(p-1)}2\right),
\]
then
\[
 q\ge\frac{p}{p-1}g,
\]
and hence the boundary classification of Theorem~\ref{thm:boundary} applies.
\end{theorem}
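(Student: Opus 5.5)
Throughout, $q=|S|\ge p^3$ and $m=(p-1)q/p$. Assume $m<g<m+p(p-1)/2$, and aim for a contradiction. We cannot simply quote Theorem~\ref{thm:gap}: it only excludes $m<g<m+(p-1)$. So I would redo the orbit-counting argument of \cite{GK10} with sharper bookkeeping, using that $q\ge p^3$.

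\textbf{Step 1: local structure at $P$.} First I would show that $S$ fixes no point other than $P$; if it fixed two points, Nakajima-type bounds would give $q\le g$ or close to it, contradicting $g<m+p(p-1)/2\le q$. Then Riemann--Hurwitz for $X\to X/S$ reads
\[
 2g-2=q(2\bar g-2)+d_P+\sum_{\text{short orbits }O\ne\{P\}}|O|\,(|S_Q|-1)\cdot(\text{higher terms}),
\]
where $d_P=\sum_i(|S_P^{(i)}|-1)\ge 2(q-1)$. Since $g<q$, this forces $\bar g=0$.

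\textbf{Step 2: $p$-rank.} By Deuring--Shafarevich,
\[
 \gamma-1=-q+\sum(\text{orbits } O\ne\{P\})\,|O|\bigl(1-1/|S_Q|\bigr).
\]
Each nontrivial short orbit contributes at least $q(1-1/p)=m$. Since $g$ lies strictly above $m$ but below $m+p(p-1)/2$, only a few configurations can occur. Either $\gamma=0$, or there is essentially one extra short orbit and $\gamma$ is pinned near $m$. In the $\gamma=0$ case, $g<q$ and Theorem~\ref{thm:zerorank} apply. The classical families and $y^3=x^p-x$ all sit at or above the boundary, so they contradict $g>m$ (the cubic family has $g=q-1$ and $q=p$, which is excluded since $q\ge p^3$).

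\textbf{Step 3: the ordinary, or near-ordinary, branch.} Here I would use the fact that $G$ has no global fixed point. There is then a $g'\in G$ moving $P$, so $S^{g'}$ fixes $P'\ne P$. Comparing the $S$-orbit of $P'$ with the $S^{g'}$-orbit of $P$ gives a point with stabilizer $S\cap S^{g'}$, which must have order $\ge q/p$ by Step 2. Now write the difference
\[
 2g-2-2(m-1)
\]
as a sum of higher-ramification contributions at $P$ and at the short orbit. The lower ramification filtration of $S$ at $P$ has jumps $i_0<i_1<\dots$. By Hasse--Arf-type congruences, applied to quotients by central subgroups of order $p$, any excess over the minimal filtration costs at least $\binom{p}{2}=p(p-1)/2$ in the genus. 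The natural tool is a divisibility statement: the jumps of a cyclic subquotient of order $p$ at a wild point are prime to $p$, and each jump beyond the minimal one adds at least $(p-1)(p^j)/2$ after the standard pairing.

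\textbf{Step 4: conclusion.} The excess must therefore be $0$, which gives $g=m$, contradicting $g>m$. The equivalent formulation then follows by solving the inequality for $q$.

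I expect Step 3 to be the main obstacle: extracting the quantitative lower bound $p(p-1)/2$ on the genus excess from the ramification filtration. I would try first to reduce to a central subgroup $Z\le S$ of order $p$, using $q\ge p^3$ to ensure that $S/Z$ still has order $\ge p^2$. I would then apply Theorem~\ref{thm:gap} to $X/Z$, provided its automorphism group still has no fixed point, and lift the resulting gap through the degree-$p$ cover, where Riemann--Hurwitz multiplies excesses by $p$ and contributes the extra $(p-1)/2$ factor.
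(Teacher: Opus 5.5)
Your skeleton matches the paper's. Deuring--Shafarevich gives $\bar\gamma=0$ and at most one extra short orbit. The case $\gamma=0$ is handed to Theorem~\ref{thm:zerorank}, and the case $\gamma>0$ must be excluded by a ramification argument (the paper's Proposition~\ref{prop:local}(b)). However, Step~2 misreads the cubic family. Theorem~\ref{thm:zerorank}(4) is $y^3=x^Q-x$ for \emph{every} $Q=p^a\equiv2\pmod3$, not just $Q=p$; that restriction only appears on the boundary. Here $q>g=Q-1$ forces $q=Q\ge p^3$, so this curve lies strictly below the boundary ($g>m$). It is excluded only because $g-m=q/p-1\ge p^2-1>p(p-1)/2$. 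Similarly, ``the classical families sit at or above the boundary'' must be checked for every $p$-subgroup of order $q>g$, not only Sylow ones, as the paper does in the proof of Theorem~\ref{thm:gap}.

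The genuine gap is Step~3. The assertion that any excess over the minimal filtration costs at least $p(p-1)/2$ is the whole content of the positive-rank case, and the tools you name do not yield it. Primality of a $C_p$-jump to $p$ only gives $t+1\ne p$; it allows $t=1$, which costs just $(p-1)/2$. Passing to $S/Z$ throws away exactly the jump of $Z$ that carries the excess. The no-fixed-point hypothesis plays no role here. Your ``stabilizer of order $\ge q/p$'' also contradicts Step~2, which gives stabilizers of order $p$ on an orbit of length $q/p$.

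What is needed is the following. From $2\delta=2q\bar g+E_P+\frac qpE_R$, get $\bar g=0$ and $E_R=0$, since $\frac qp(p-1)\ge p^2(p-1)>2\delta$. Because $2\delta<p^2-1$, it follows that $S_P^{(2)}=\dots=S_P^{(t+1)}=Z$ has order $p$, $S_P^{(t+2)}=1$, and $2\delta=t(p-1)$ with $1\le t<p$. Then prove $p\mid t$. The paper does this by cases:
\begin{itemize}
\item If $Z\ne L=S_R$, then $X\to X/L$ is weakly ramified. Subtracting Deuring--Shafarevich from Hurwitz gives $g-\gamma=p(g_L-\gamma_L)$, so $p\mid\delta$.
\item If $Z=L$, then $X/Z\cong\PP^1$. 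Lifting the translations of $S/Z$ to the Artin--Schreier normal form yields a polynomial coboundary of degree $t$, which is impossible.
\end{itemize}
Your Hasse--Arf idea can be repaired, but on a subgroup rather than a quotient. Take an abelian $A\le S$ of order $p^2$ containing the central $Z$; one exists inside $\langle x\rangle Z$ for any $x\in S\setminus Z$. Its lower filtration is $A,Z,1$ with jumps $1$ and $t+1$, so its upper jumps are $1$ and $1+t/p$, and Hasse--Arf forces $p\mid t$. Your fallback through Theorem~\ref{thm:gap} on $X/Z$ fails as stated, since neither $Z\lhd G$ nor fixed-point-freeness of $\Aut(X/Z)$ is available.
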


\begin{theorem}[Beyond the refined gap]\label{thm:beyond}
Let $k$ be algebraically closed of odd characteristic $p$, let $X/k$ have genus $g\ge2$, and put $G=\Aut_k(X)$. Assume that $G$ has no global fixed point on $X$ and that, for some $P\in X$, the stabilizer $G_P$ contains a $p$-subgroup $S$ of order $q\ge p^3$. Put $m=(p-1)q/p$. If
\[
 m<g<m+\frac{p^2-1}{2},
\]
then $g=m+\frac{p(p-1)}2$, and the following hold.
\begin{enumerate}
\item $X$ has $p$-rank $m$, and $X/S\cong\PP^1$.
\item Besides $P$, the group $S$ has exactly one short orbit $\Delta$. It has length $q/p$, and for $R\in\Delta$ the stabilizer $L=S_R$ has order $p$ and trivial second ramification group.
\item $S_P^{(2)}=\cdots=S_P^{(p+1)}=Z$ has order $p$, and $S_P^{(p+2)}=1$.
\item $Z\neq L$, and the cover $X\to X/L$ is weakly ramified with $g(X/L)-\gamma(X/L)=\frac{p-1}{2}$.
\end{enumerate}
\end{theorem}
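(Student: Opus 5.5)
We will prove Theorem~\ref{thm:beyond} with the same orbit-counting machinery behind Theorems~\ref{thm:gap} and~\ref{thm:refinedgap}, now tracking the error terms exactly. Throughout, $\gamma$ denotes the $p$-rank of $X$. Write $\bar g$ for the genus of $X/S$. Let $\Delta_1,\dots,\Delta_r$ be the short orbits of $S$ other than $\{P\}$, with stabilizers of orders $p^{e_i}$ and orbit lengths $q/p^{e_i}$.

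\emph{Two equations.} Deuring--Shafarevich gives
\[
 \gamma-1=q(\bar\gamma-1)+\sum_i\frac{q}{p^{e_i}}\bigl(p^{e_i}-1\bigr),
\]
where $\bar\gamma$ is the $p$-rank of $X/S$. Hurwitz gives
\[
 2g-2=q(2\bar g-2)+d_P+\sum_i\frac{q}{p^{e_i}}\,d_i ,
\]
where $d_P=\sum_{j\ge0}(|S_P^{(j)}|-1)$ and $d_i\ge 2(p^{e_i}-1)$ is the different exponent at a point of $\Delta_i$.

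\emph{Rational quotient and one short orbit.} First I reuse the step from the proof of Theorem~\ref{thm:refinedgap}. Since $G$ fixes no point, $P$ has a nontrivial $G$-conjugate, and the orbit argument of \cite{GK10} bounds $g$ from below in terms of $q$ and the $S$-orbits. Using $g<m+(p^2-1)/2<q$, this forces $\bar g=0$. It also forces at least one short orbit besides $P$: if there were none, $X\to X/S$ would be ramified only at $P$, so $\gamma=0$, and Theorem~\ref{thm:zerorank} together with Corollary~\ref{cor:zerogap} would put $g$ on the boundary $m$ or force a classical curve, both excluded. With $\bar\gamma=0$ and one orbit of stabilizer order $p$, the Deuring--Shafarevich equation gives $\gamma=q-1-q/p+1=m$. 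A second orbit, or a stabilizer of order $p^2$, would raise the contribution to $g$ by at least $q/p^2\ge p$ in each piece. Combined with the lower bound $d_P\ge$ (what comes from $S_P^{(1)}=S$), this exceeds $m+(p^2-1)/2$ because $q\ge p^3$. So exactly one short orbit $\Delta$ of length $q/p$ remains, and for $R\in\Delta$ we have $L=S_R$ of order $p$.

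\emph{Pinning down $g$.} Now Hurwitz reads
\[
 2g-2=-2q+d_P+\frac{q}{p}\,d_R,
\]
with $d_R=(p-1)(1+s)$, where $s\ge1$ is the last index with $L_R^{(s)}\ne1$. If $s\ge2$, the term $\frac qp d_R$ alone pushes $g$ beyond $m+(p^2-1)/2$, so $s=1$ and $L_R^{(2)}=1$; this gives items (1) and (2). The remaining quantity is the excess
\[
 d_P-2(q-1)=\sum_{j\ge2}\bigl(|S_P^{(j)}|-1\bigr),
\]
which equals $2(g-m)$. I will show that if $S_P^{(2)}\ne1$ then $S_P^{(2)}$ has order exactly $p$ and persists through index $p+1$. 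For this I use Hasse--Arf applied to a central subgroup $Z\le S_P^{(2)}$ of order $p$: the jump of $Z$ in lower numbering is prime to $p$, and the condition that $G$ moves $P$ (so $P$ is not totally wild for a larger group) forces the jump to be at least $p+1$. Then the excess is at least $p(p-1)$, so $g-m\ge p(p-1)/2$, and $g-m<(p^2-1)/2$ forces equality, i.e.\ the filtration in (3). If instead $S_P^{(2)}=1$, then $g=m$ exactly; this is the boundary case, which is excluded.

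\emph{The main obstacle.} The hardest step is the lower bound ``jump $\ge p+1$'', and with it excluding jumps $2,\dots,p$. I would try first the conjugate-point argument of \cite{GK10}. Take $h\in G$ with $h(P)\ne P$, and compare the $S$-orbit and the $S^h$-orbit through $h(P)$ in the Hurwitz count for $\langle S,S^h\rangle$, or for $X\to X/Z$. This should show that a small jump makes $X/Z$ rational with too large a $p$-group, contradicting Theorem~\ref{thm:gap}. Item (4) then follows. First, $Z\ne L$, since $Z$ fixes $P$ with a higher jump while $L$'s only wild data at $R$ is weak. Second, $X\to X/L$ is weakly ramified because $L$ has trivial second ramification at every fixed point; at $P$ this uses $L\cap S_P^{(2)}=L\cap Z=1$. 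Finally, running Deuring--Shafarevich and Hurwitz for $L$ gives $g-\gamma=p(g(X/L)-\gamma(X/L))$, so $g(X/L)-\gamma(X/L)=(g-m)/p=(p-1)/2$.
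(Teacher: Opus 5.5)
Your proposal has a genuine gap exactly at the step you flag as the main obstacle, and the reason you give for $Z\neq L$ is wrong.

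\textbf{Item (3) is not proved.} For item (3), the Artin--Schreier observation only shows that the jump $t+1$ of $Z$ at $P$ is prime to $p$, i.e.\ $t\neq p-1$. The claim that ``$G$ moves $P$'' forces a jump $\ge p+1$ is not an argument, and no such principle holds: on $y^2=x^q-x$ every element of order $p$ in $S$ has jump $2$ at $P$, yet $G$ moves $P$.

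The missing idea is a divisibility argument. Since $E_P=2\delta<p^2-1$, one has $|S_P^{(2)}|=p$ and $2\delta=t(p-1)$ with $1\le t\le p$. If $Z\neq L$, then $X\to X/L$ is weakly ramified at all its fixed points ($L\cap S_P^{(2)}=1$ at $P$, and $E_R=0$ on $\Delta$). Subtracting the Hurwitz and Deuring--Shafarevich formulas for $L$ then gives $g-\gamma=p(g_L-\gamma_L)$. With $\gamma=m$ this says $p\mid\delta=t(p-1)/2$, hence $p\mid t$ and $t=p$. You wrote down this very identity at the end of item (4), but only used it to compute $(p-1)/2$. Used earlier, it removes your obstacle, provided $Z\neq L$ has been established first.

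\textbf{$Z\neq L$ needs the global hypothesis.} It cannot be obtained locally. $L\le S$ also fixes $P$, and nothing prevents one subgroup of order $p$ from having jump $t+1\ge2$ at $P$ and jump $1$ at the points of $\Delta$. Remark~\ref{rem:endpoint} exhibits such a cover with $Z=L$ and $t=p$. This case has to be excluded using the hypothesis that $G$ has no global fixed point:
if $Z=L$, Hurwitz for $X\to X/Z$ gives $X/Z\cong\PP^1$. Since $g>m\ge p^2(p-1)>(p-1)^2$, Castelnuovo--Severi forces $Z\lhd G$. Then $G/Z$ preserves jumps on the branch set, so it fixes the unique branch point with jump $\ge2$, and $G$ fixes $P$ (Lemma~\ref{lem:ZL-global}).

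\textbf{Smaller points.}
Excluding $\gamma=0$ needs $g\le m$ for the classical families and $g-m=q/p-1\ge p^2-1$ for the cubic one; Corollary~\ref{cor:zerogap} gives $g=q-1$, not $g=m$.
A stabilizer of order $p^s$ with $s\ge2$ raises $\gamma$ by at least $q(p-1)/p^2\ge p(p-1)$, whereas your stated increment ``$\ge p$'' would not exceed $(p^2-1)/2$.
Finally, $\bar g=0$ follows from Hurwitz and $g<q$ alone, without any orbit argument from \cite{GK10}.
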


For $p=3$ one has $\frac{p^2-1}2=\frac{p(p-1)}2+1$, so Theorem~\ref{thm:beyond} adds no forbidden value but describes the first possible value. For $p\ge5$ it forbids, in addition, every $g$ with
\[
 m+\tfrac{p(p-1)}2<g<m+\tfrac{p^2-1}2.
\]
Whether the configuration of Theorem~\ref{thm:beyond} occurs is open; see Remark~\ref{rem:endpoint}.

\begin{theorem}[Classification of the first admissible layer]\label{thm:firstlayer}
Let $k$ be algebraically closed of odd characteristic $p$, let $X/k$ have genus $g\ge2$, and put $G=\Aut_k(X)$. Assume that $G$ has no global fixed point on $X$ and that, for some $P\in X$, the stabilizer $G_P$ contains a $p$-subgroup $S$ of order $q>p$. Put
\[
 m=\frac{p-1}{p}q.
\]
If
\[
 g=m+(p-1),
\]
then necessarily
\[
 q=p^2,\qquad g=\gamma(X)=p^2-1.
\]
Moreover $S\cong C_p^2$, the quotient $X/S$ is rational, and $S$ has exactly two fixed points; the cover $X\to X/S$ is totally and weakly ramified at both of them and unramified elsewhere.

Choose a coordinate $x$ on $X/S$ sending the two branch points to $0$ and $\infty$, and write
\[
 \wp(z)=z^p-z.
\]
Then
\[
 k(X)=k(x)\bigl(\wp^{-1}(V)\bigr),
\]
where $V$ is a two-dimensional $\F_p$-subspace of
\[
 \left\{ax+\frac b x:\ a,b\in k\right\}
\]
such that every nonzero element has both coefficients nonzero. The normalizer $N_G(S)$ contains an element interchanging the two fixed points of $S$. After multiplying $x$ by a scalar, the induced involution on $X/S$ is $x\mapsto1/x$, and $V$ is invariant under
\[
 J\!\left(ax+\frac b x\right)=bx+\frac a x.
\]
Conversely, every such $J$-invariant subspace $V$ defines an ordinary curve of genus $p^2-1$ satisfying the hypotheses.

Since $p$ is odd, after a further projective change preserving the unordered branch pair, $V$ has one of the following two normal forms:
\begin{enumerate}
\item
\[
 V=A\left(x+\frac1x\right),
\]
where $A\subset k$ is a two-dimensional $\F_p$-subspace;
\item
\[
 V=\F_p\,a\left(x+\frac1x\right)
 \oplus
 \F_p\,b\left(x-\frac1x\right),
\]
where $a,b\in k^*$ and $a/b\notin\F_p$.
\end{enumerate}
In the first case the involution $x\mapsto1/x$ acts trivially on $V$; in the second it has eigenvalues $1$ and $-1$. The special family
\[
 y^{p^2}-y=x+\frac{c}{x}
\]
is contained in case \emph{(1)}. Thus the strict inequality in \eqref{eq:gap-bound} is optimal as a bound uniform in $q$, and equality at the first layer occurs only for $q=p^2$.
\end{theorem}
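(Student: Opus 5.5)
We argue with the orbit-counting method of \cite{GK10}, combined with the Deuring--Shafarevich formula, and take the earlier results (Theorems~\ref{thm:gap}--\ref{thm:beyond}) as available. Put $\gamma=\gamma(X)$ and $\bar g=g(X/S)$.

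\emph{Step 1: reduction to $q=p^2$.} With $g=m+(p-1)$ we have $q=\frac{p}{p-1}(g-(p-1))$. This is the first value not covered by the strict inequality \eqref{eq:gap-bound} of Theorem~\ref{thm:gap}. If $q\ge p^3$, then $p-1<p(p-1)/2$ for $p\ge3$, so $g$ lies in the forbidden interval of Theorem~\ref{thm:refinedgap}; hence $q=p^2$. For $q=p^2$ we write Riemann--Hurwitz for $X\to X/S$. At $P$ the different exponent is at least $2(q-1)$. Every other short orbit of length $q/|S_R|$ contributes at least $2(q-q/|S_R|)\ge 2(q-p)$, which uses only that $S_R$ is a nontrivial $p$-group. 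This gives
\[
2g-2\ge q(2\bar g-2)+2(q-1)+\cdots .
\]
Since $2g-2=2m+2p-4=2p^2-4$, we get $\bar g=0$. Moreover, the total excess over the weakly ramified minimum $2(q-1)$ at $P$, plus the contributions of the other short orbits, equals $2(q-1)$. Deuring--Shafarevich gives
\[
\gamma-1=-q+\sum(\text{orbit lengths})(|S_R|-1)/|S_R| .
\]
We enumerate the possibilities: one extra orbit of length $1$, $p$, or several. The hypothesis that $G$ fixes no point is then used through the orbit argument of \cite{GK10}: some $h\in G$ moves $P$, and $S$ and $hSh^{-1}$ share a non-$S$-fixed orbit structure. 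This should force the configuration in which $S$ has a second fixed point $Q$, both points are weakly ramified, and there is no other ramification. Then $2g-2=-2q+4(q-1)$ gives $g=q-1=p^2-1$, and Deuring--Shafarevich gives $\gamma=-q+1+2(q-1)=g$. We have to rule out the alternative ``higher ramification at $P$ plus one orbit of length $p$''. We expect this exclusion to be the main obstacle, and we would attack it by mimicking the proof of Theorem~\ref{thm:beyond}: study the quotient by an order-$p$ subgroup $L$ and use Proposition~\ref{prop:rank-dichotomy}-type $p$-rank bounds, together with the requirement that some automorphism move $P$ into the same conjugacy behaviour.

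\emph{Step 2: explicit model.} Since $S$ is elementary abelian or cyclic of order $p^2$, and the ramification is weakly ramified at two points (so every order-$p$ subgroup has conductor $1$ there), $S$ cannot be cyclic: a cyclic $C_{p^2}$ forces a jump greater than $1$ by Hasse--Arf. Hence $S\cong C_p^2$. The function field is then $k(x)(\wp^{-1}(V))$ with $V$ a two-dimensional $\F_p$-space of reduced functions having poles only at $0$ and $\infty$, each of order at most $1$. This gives $V\subset\{ax+b/x\}$, and every nonzero element must have both coefficients nonzero, since otherwise the corresponding degree-$p$ subcover would be unramified at one branch point. That would contradict total ramification, or would give an ordinary genus count inconsistent with $g=p^2-1$.

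\emph{Step 3: the involution and normal forms.} Because $G$ has no global fixed point and $N_G(S)$ permutes the two $S$-fixed points, Step~1 (applied to $hSh^{-1}$ fixing $h(P)$) shows that $N_G(S)$ swaps them. Here we would use that $S$ is the Sylow subgroup of $G_P$, so its conjugates fixing $P$ coincide with $S$. After scaling $x$, the induced involution is $x\mapsto1/x$, and $V$ is invariant under $J$. Since $J^2=1$ and $p$ is odd, $V$ splits into $\pm1$ eigenspaces. The $+1$ eigenspace lies in $k(x+1/x)$ and the $-1$ eigenspace in $k(x-1/x)$. This yields the two normal forms: $V$ is totally $+1$ (case (1)), or of type $(1,-1)$ (case (2)). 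A $(-1,-1)$ space $B(x-1/x)$ is moved to case (1) by $x\mapsto ix$, which preserves the branch pair. The condition $a/b\notin\F_p$ is exactly the nonvanishing-coefficient condition. For the converse, genus and $p$-rank follow from the Riemann--Hurwitz and Deuring--Shafarevich computations above. The lift of $J$ together with $S$ gives a group with no global fixed point: the fixed points can only be the two $S$-fixed points, and these are swapped. Finally, $y^{p^2}-y=x+c/x$ corresponds to $V=\{\lambda(x+c/x)\}$ with $\lambda$ running over the $\F_p$-span in $\F_{p^2}$, which after scaling $x$ is of type (1).
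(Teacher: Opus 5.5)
The gap is the core of the theorem: you do not prove that $S$ has a second fixed point, i.e.\ $\gamma=g$, and the swap argument in Step~3 is missing its key ingredients. Your reduction to $q=p^2$ via Theorem~\ref{thm:refinedgap}, the Artin--Schreier description in Step~2, and the eigenspace splitting into the two normal forms do agree with the paper. After $\bar g=0$, the excess budget and Deuring--Shafarevich leave two alternatives, and you resolve neither. (In Deuring--Shafarevich, a short orbit of length $\ell$ other than $\{P\}$ contributes $q-\ell$ to $\gamma$, not $\ell(|S_R|-1)/|S_R|$.)

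First, there may be \emph{no} further short orbit, so $\gamma=0$ and all the excess sits at $P$. This case is missing from your list, and it cannot be excluded locally: for $p>3$ the curve $v^{p^2}-v=u^3$ has genus $p^2-1$, $p$-rank zero, and a $p$-group of order $p^2$ fixing a point. The paper removes it with the global classification of Theorem~\ref{thm:zerorank}. The classical families satisfy $g\le m$, and the cubic family has $g-m=q/p-1$, which equals $p-1$ only if $p^2\equiv2\pmod 3$, which is impossible. Saying that ``the orbit argument of \cite{GK10} should force'' the configuration is no substitute for this.

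Second, you flag the case of an orbit $\Delta$ of length $p$ but leave it open. The budget forces $E_R=0$, since $pE_R\ge p(p-1)>2(p-1)$, and $E_P=2(p-1)$. Hence $S_P^{(2)}=S_P^{(3)}=Z\cong C_p$, $S_P^{(4)}=1$ and $\gamma=m$. The missing idea is to compare $Z$ with $L=S_R$:
\begin{itemize}
\item If $Z\ne L$, then $X\to X/L$ is weakly ramified everywhere. Subtracting Hurwitz from Deuring--Shafarevich gives $g-\gamma=p(g_L-\gamma_L)$, contradicting $g-\gamma=p-1$.
\item If $Z=L$, then $X/Z\cong\PP^1$ with jump $3$ at $\infty$ and jump $1$ over $W$. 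This is impossible for $p=3$. For $p>3$, $f(x+v)-f(x)$ is a nonconstant quadratic, hence not an Artin--Schreier coboundary; alternatively apply Lemma~\ref{lem:ZL-global}.
\end{itemize}
Rank-dichotomy bounds do not give this; the congruence $g\equiv\gamma\pmod p$ for weakly ramified $C_p$-covers does.

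Step~3 also does not establish that $N_G(S)$ interchanges the two fixed points $P_0,P_\infty$. Knowing that a conjugate $hSh^{-1}$ again has two fixed points yields no contradiction: a priori $N_G(S)$ could fix both points while $G$ moves them elsewhere. Two ingredients are missing.
\begin{itemize}
\item $S$ is Sylow in $G$. If $S<S^*$, then $N_{S^*}(S)>S$ fixes both points because $p$ is odd. This puts a $p$-group of order $\ge p^3>p(p+1)=\frac{p}{p-1}g$ in $G_{P_0}$, so \cite[Theorem~1.1]{GK10} applies, and no classical curve has genus $p^2-1$.
\item A Hurwitz count. Suppose $N=N_G(S)$ does not swap the points. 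Then $G_{P_0}=G_{P_\infty}=N$, and $P_0,P_\infty$ lie in distinct $G$-orbits of length $r=[G:N]$. Write $|N|=p^2h$; the weakly ramified different exponent is $p^2(h+1)-2$. Hurwitz gives $2(p^2-2)\ge2r(p^2-2)$, so $r=1$ and $G=N$ fixes $P_0$, a contradiction.
\end{itemize}
This count is exactly where the hypothesis on $G$ enters, and your proposal has no counterpart to it.
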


\begin{corollary}[Isomorphism classes and normalizers in the first layer]\label{cor:firstlayer-moduli}
Let $X$ and $X'$ be first-layer curves as in Theorem~\ref{thm:firstlayer}, choose Sylow $p$-subgroups $S\le\Aut(X)$ and $S'\le\Aut(X')$, and let $V,V'\subset k^2$ be the corresponding Artin--Schreier planes after sending the two branch points to $0,\infty$. Then $X\cong X'$ if and only if, for some $\lambda\in k^*$,
\[
 V'=D_\lambda(V)
 \quad\text{or}\quad
 V'=J D_\lambda(V),
\]
where
\[
 D_\lambda(a,b)=(\lambda^{-1}a,\lambda b),
 \qquad
 J(a,b)=(b,a).
\]
Thus the (unmarked) first-layer isomorphism classes are the orbits of the admissible two-dimensional $\F_p$-subspaces of $k^2$ under $k^*\rtimes C_2$. Moreover, for a first-layer curve $X=X(V)$,
\[
 N_{\Aut(X)}(S)/S
 \cong
 \operatorname{Stab}_{\,k^*\rtimes C_2}(V),
\]
where $k^*\rtimes C_2$ is the group of projectivities preserving the unordered branch pair $\{0,\infty\}$.
\end{corollary}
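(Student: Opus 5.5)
We prove the corollary by reducing every isomorphism $X\to X'$ to a projectivity of $\PP^1$ that preserves the branch pair $\{0,\infty\}$, and then reading off its action on the Artin--Schreier planes. Since $q=p^2$ is the full $p$-part relevant here, the first step is to show that $S$ is a Sylow $p$-subgroup of $\Aut(X)$. An isomorphism $\phi\colon X\to X'$ carries a Sylow $p$-subgroup of $\Aut(X)$ to one of $\Aut(X')$, and Sylow subgroups are conjugate, so after composing $\phi$ with an element of $\Aut(X')$ we may assume $\phi S\phi^{-1}=S'$. Then $\phi$ descends to an isomorphism $\bar\phi\colon X/S\to X'/S'$. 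By Theorem~\ref{thm:firstlayer}, both quotients are $\PP^1$, branched exactly at $\{0,\infty\}$ in the chosen coordinates, so $\bar\phi$ is a projectivity preserving this unordered pair. Hence $\bar\phi$ is either $x\mapsto\lambda x$ or $x\mapsto\lambda/x$.

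Next we compute how such a $\bar\phi$ acts on $V$. The function field $k(X)=k(x)(\wp^{-1}(V))$ determines $V$ as the image of $\{f\in k(x): f\in\wp(k(X))\}$ modulo $\wp(k(x))$. Within $\{ax+b/x\}$, no nonzero element lies in $\wp(k(x))$, because it has simple poles. So $V$ is recovered as the unique plane of reduced representatives with poles only at $0,\infty$ of order $\le1$; this uniqueness of reduced representatives is the standard Artin--Schreier fact used in Theorem~\ref{thm:firstlayer}.

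Pulling back along $x\mapsto\lambda x$ sends $ax+b/x$ to $a\lambda x+b\lambda^{-1}/x$, which is $D_{\lambda^{-1}}$ in the statement's normalization. Pulling back along $x\mapsto\lambda/x$ gives $J D_\lambda$ up to renaming $\lambda$. So $k(x)$-isomorphic compositum fields correspond exactly to $V'=D_\lambda V$ or $V'=JD_\lambda V$. Conversely, if $V'$ has this form, the projectivity lifts to an isomorphism of the compositum fields, hence to $X\cong X'$. This proves the isomorphism criterion and the orbit description.

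For the normalizer, we apply the same reasoning with $X=X'$. An element of $N_{\Aut(X)}(S)$ induces an automorphism of $X/S\cong\PP^1$ preserving the branch pair, so it lies in $k^*\rtimes C_2$ and stabilizes $V$. The kernel of $N\to\Aut(X/S)$ is exactly $\operatorname{Gal}(k(X)/k(x))=S$. Surjectivity onto $\operatorname{Stab}(V)$ holds because any stabilizing projectivity lifts, since the compositum field is preserved.

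The main obstacle is the Sylow claim: we need $|\Aut(X)|_p=p^2$, so that an arbitrary Sylow subgroup chosen in the statement matches the $S$ of the theorem and conjugation is available. We would argue as follows. A larger $p$-subgroup $S_1\supseteq S$ fixes a point and has order $\ge p^3$, since any $p$-group in $\Aut(X)$ with $g\ge2$ and $p$-rank $\gamma=g$ is handled by the stabilizer structure. It then satisfies $q_1\ge p^3>\frac{p}{p-1}g=\frac{p}{p-1}(p^2-1)$. By \cite[Theorem~1.1]{GK10}, $X$ would then be classical, and this contradicts ordinarity. The remaining concern is justifying that $S_1$ fixes a point. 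We would obtain this from the fact that $S$ fixes exactly two points and normal subgroups of index $p$ in $S_1$ force $S_1$ to preserve that pair. As $p$ is odd, $S_1$ then fixes both points.
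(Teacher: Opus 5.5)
Your proposal is correct and follows the paper's proof essentially step for step. Sylow conjugacy reduces to an isomorphism carrying $S$ to $S'$, which descends to $x\mapsto\lambda x$ or $x\mapsto\lambda/x$ and so acts on reduced classes by $D_\lambda$ or $JD_\lambda$; the converse is by lifting, and $N_{\Aut(X)}(S)/S\cong\operatorname{Stab}(V)$ because the kernel of descent is the deck group $S$. The Sylow property you single out is already proved in the proof of Theorem~\ref{thm:firstlayer}, by exactly the argument you sketch: the normalizer condition, then $N_{S^*}(S)$ fixing both points of $\operatorname{Fix}(S)$ since $p$ is odd, then \cite[Theorem~1.1]{GK10}. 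Base that step on $N_{S^*}(S)$ alone, not on your general claim that $p$-subgroups of an ordinary curve fix a point, which need not hold.
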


Corollary~\ref{cor:firstlayer-moduli} implies that each of the two normal forms of Theorem~\ref{thm:firstlayer} contributes a two-parameter family (Remark~\ref{rem:moduli}).

Sections~2--6 prove Theorem~\ref{thm:zerorank}. Sections~7 and~8 treat the equality case without any assumption on the $p$-rank and prove Theorem~\ref{thm:boundary}. Section~9 proves Theorems~\ref{thm:gap}--\ref{thm:firstlayer} and Corollary~\ref{cor:firstlayer-moduli}. The positive-$p$-rank part of these arguments is purely local; it is isolated in Proposition~\ref{prop:local}.

\section{Preliminaries}

Throughout the paper $p>2$. For a finite subgroup $H\le\Aut(X)$ and a point $P\in X$, we write $H_P$ for the stabilizer of $P$ and $H_P^{(i)}$ for the $i$th lower ramification group. We use the Hurwitz and Deuring--Shafarevich formulas in their standard forms; see \cite{HKT,Serre}.

If $P$ is fixed by a group $H$, then
\[
 H_P=H_P^{(1)}\rtimes C,
\]
where $H_P^{(1)}$ is the unique Sylow $p$-subgroup of $H_P$ and $C$ is cyclic of order prime to $p$. In particular, every point stabilizer is solvable, and every subgroup of a point stabilizer whose order is prime to $p$ is cyclic.

We shall use the Kantor--O'Nan--Seitz classification of doubly transitive permutation groups with cyclic two-point stabilizer \cite{KOS}, in the form recalled in \cite[Section~2]{GK10} and made precise in Lemma~\ref{lem:nofield}. We also use the classification of finite $p$-irregular subgroups of $\PGL(2,k)$; a convenient modern reference is \cite{Faber}.

\begin{lemma}\label{lem:onepoint}
Let $X$ have genus $g\ge0$ and $p$-rank zero. All $p$-subgroups below are finite.
\begin{enumerate}
\item Every nontrivial $p$-subgroup of $\Aut(X)$ has a unique fixed point and acts semiregularly away from it.
\item Let $S$ be a $p$-subgroup of order $q>g$, and let $P$ be its fixed point. Then
\[
 X/S\cong\PP^1.
\]
If $S_i=S_P^{(i)}$, then
\begin{equation}\label{eq:genusS}
 2g=\sum_{i\ge2}(|S_i|-1).
\end{equation}
\item Assume $g\ge2$ and retain the notation and hypothesis $q=|S|>g$ of \emph{(2)}. If $G=\Aut(X)$, $I=G_P$, and $S$ is a Sylow $p$-subgroup of $G$, then the different exponent at $P$ for $X\to X/G$ is
\begin{equation}\label{eq:different}
 d_P=2g-2+q+|I|.
\end{equation}
\end{enumerate}
\end{lemma}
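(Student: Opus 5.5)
The proof has three parts, one for each item of Lemma~\ref{lem:onepoint}. Each part rests on the Deuring--Shafarevich formula for a $p$-group $H$ acting on a curve $Y$ with $p$-rank $\gamma$:
\[
 \gamma(Y)-1=|H|(\gamma(Y/H)-1)+\sum_{R}(|H_R|-1).
\]

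For (1), apply this formula to the subgroup of order $p$ in a nontrivial $p$-subgroup $H$. Since $\gamma(X)=0$, we get $-1=p(\gamma'-1)+r(p-1)$, where $r$ is the number of ramified points. Then $p(1-\gamma')=1+r(p-1)>0$ forces $\gamma'=0$. It follows that $p=1+r(p-1)$, so $r=1$.

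Now run the same computation for $H$ itself. With $\gamma(X/H)=0$ (it is $\le\gamma(X)=0$), we get $|H|-1=\sum_R(|H_R|-1)$. Every $p$-subgroup of $H$ has exactly one fixed point. Counting, via the class equation, the fixed point of the center of $H$, which $H$ must preserve, shows that this point is fixed by all of $H$. That point then already contributes $|H|-1$ to the sum, so all other stabilizers are trivial. This gives semiregularity away from it and uniqueness of the fixed point.

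For (2), let $\bar g=g(X/S)$ and apply Hurwitz:
\[
 2g-2=q(2\bar g-2)+d,\qquad d=\sum_{i\ge0}(|S_i|-1)=2(q-1)+\sum_{i\ge2}(|S_i|-1),
\]
since $S_0=S_1=S$ and $P$ is the only ramified point. If $\bar g\ge1$, then $2g-2\ge d\ge 2(q-1)$, which gives $g\ge q$ and contradicts $q>g$. So $\bar g=0$. Substituting $\bar g=0$ yields $2g-2=-2q+2q-2+\sum_{i\ge2}(|S_i|-1)$, which is \eqref{eq:genusS}.

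For (3), use Hilbert's formula for $G_P=I$, whose Sylow subgroup is $I^{(1)}=S$:
\[
 d_P=\sum_{i\ge0}(|I^{(i)}|-1)=(|I|-1)+(q-1)+\sum_{i\ge2}(|S_i|-1).
\]
This uses $I^{(i)}=S^{(i)}$ for $i\ge1$, which holds because the lower ramification filtration restricts well to subgroups: for $i\ge1$, the group $I^{(i)}$ is a $p$-group inside $S$, and $S^{(i)}=S\cap I^{(i)}$. Plugging in \eqref{eq:genusS} gives $d_P=2g-2+q+|I|$.

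The only delicate step is the one in (1) showing that the whole $p$-group, and not merely each cyclic subgroup, fixes a common point. I would handle it through the central subgroup argument above, or alternatively by a normal series with induction on $|H|$.
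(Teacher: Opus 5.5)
Your proof is correct and follows the paper's argument: Deuring--Shafarevich for an order-$p$ subgroup, then a central element of order $p$ to produce the common fixed point, then Hurwitz for (2) and Hilbert's formula with $I^{(1)}=S$ for (3). In (1), read your ``fixed point of the center'' as the fixed point of a central subgroup of order $p$, since only those are known at that stage to have a unique fixed point. The only real difference is cosmetic: you get semiregularity from the Deuring--Shafarevich sum for $H$ itself, whereas the paper passes to a power of order $p$ of an element fixing a second point.
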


\begin{proof}
Let $C\le\Aut(X)$ have order $p$, and let $r$ be the number of fixed points of $C$. Deuring--Shafarevich gives
\[
 -1=p(\gamma'-1)+(p-1)r,
\]
where $\gamma'$ is the $p$-rank of $X/C$. Hence $r=1$ and $\gamma'=0$.

Now let $U$ be a nontrivial $p$-subgroup. Choose a central element $z\in U$ of order $p$, and let $P$ be its unique fixed point. Since $z$ is central, $u(P)$ is fixed by $z$ for every $u\in U$, and therefore $u(P)=P$. Thus $P$ is fixed by all of $U$. If $1\ne u\in U$ fixed another point $Q$, then a suitable power of $u$ of order $p$ would also fix $Q$, contradicting the first paragraph. This proves (1).

Let $g_0$ be the genus of $X/S$. The cover $X\to X/S$ is ramified only at $P$. Since $S_0=S_1=S$, its different exponent satisfies
\[
 d_P(S)\ge2(q-1).
\]
If $g_0\ge1$, Hurwitz gives
\[
 2g-2\ge2(q-1),
\]
so $g\ge q$, a contradiction. Hence $g_0=0$. Applying Hurwitz again gives
\[
 2g-2=-2q+2(q-1)+\sum_{i\ge2}(|S_i|-1),
\]
which is \eqref{eq:genusS}. None of these steps uses $g\ge2$.

Finally, $I_0=I$, $I_1=S$, and $I_i=S_i$ for $i\ge1$. Therefore
\[
 d_P=(|I|-1)+(q-1)+\sum_{i\ge2}(|S_i|-1),
\]
and \eqref{eq:different} follows from \eqref{eq:genusS}.
\end{proof}

We shall also use the following consequence of the theory of Harbater--Katz--Gabber curves.

\begin{lemma}\label{lem:nonsolvable}
Under the hypotheses of Theorem~\ref{thm:zerorank}, the full group $G$ is not solvable.
\end{lemma}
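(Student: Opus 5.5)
We argue by contradiction: assume $G$ is solvable and invoke the extra-automorphism theorem of Bleher--Chinburg--Poonen--Symonds \cite[Theorems~1.8 and~1.11]{BCPS}. First we place ourselves in the Harbater--Katz--Gabber setting. By Lemma~\ref{lem:onepoint}(1),(2), $S$ fixes only $P$, acts semiregularly elsewhere, and $X/S\cong\PP^1$. Hence $X\to X/S$ is totally ramified at $P$ and unramified elsewhere, so $(X,S)$ (indeed $(X,G_P)$, since $G_P=S'\rtimes C$ with $S'\supseteq S$ the Sylow of $G_P$) is an HKG cover. Replacing $S$ by a Sylow $p$-subgroup of $G_P$ only increases $|S|$. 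If $S$ were not Sylow in $G$, a larger $p$-group $U\ge S$ would still fix a unique point, which must be $P$, since it contains $S$ and $S$ fixes only $P$. So we may assume $S$ is a Sylow $p$-subgroup of $G$ contained in $G_P$.

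Next we use the no-fixed-point hypothesis. Because $G$ fixes no point, $G\ne G_P$, so $G$ is strictly larger than the HKG group $I=G_P$. The BCPS theorem classifies HKG covers $X\to X/I$ for which the full automorphism group of $X$ strictly contains $I$ and is solvable. In odd characteristic, with $I$ having $p$-part of order $q>g$, the solvable cases reduce to the following:
\begin{itemize}
\item a hyperelliptic-type or cyclic-cover situation, where $G$ contains a normal $p'$-subgroup $M$ with $X/M\cong\PP^1$ and $G/M\le\PGL(2,k)$ normalizing a $p$-subgroup, i.e.\ $G/M$ is contained in a Borel subgroup;
\item small exceptional groups such as $\mathrm{SL}(2,3)$- or $A_4$-type actions in characteristic $3$.
\end{itemize}
In every such case we will show that $G$ fixes a point, contradicting the hypothesis.

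The key mechanism is this. Let $M=O_{p'}(G)$. If $G$ is solvable and fixes no point, then $G/M$ acts on $X/M$, and $G$ permutes transitively the $G$-orbit of $P$. Every conjugate of $S$ fixes a unique point, so this orbit is in bijection with the set of Sylow $p$-subgroups fixing distinct points. Sylow $p$-subgroups are pairwise trivially intersecting, because a nontrivial common element would have two fixed points, contradicting Lemma~\ref{lem:onepoint}(1). A solvable group with a trivially intersecting, non-normal Sylow $p$-subgroup and $O_p(G)=1$ forces $G/M$ to be a Frobenius-type or $\mathrm{A\Gamma L}(1,q)$-type group. In that situation $S M/M$ is normal in $G/M$, so $SM$ is normal in $G$. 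Then $S$ is characteristic in $SM$ as its unique Sylow after Frattini/Schur--Zassenhaus, provided $S$ centralizes $M$. This makes $S$ normal in $G$, and so its unique fixed point $P$ is $G$-fixed, the desired contradiction.

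The main obstacle is the case where $S$ does not centralize $M$, i.e.\ the cyclic-cover case $X\to X/M$. There we would use the different formula \eqref{eq:different} together with Hurwitz for $X\to X/G$ to bound $|M|$ and show $M$ is tiny (order $2$ or $3$). Then $S$ acts on the cyclic group $M$ with $|\Aut(M)|$ prime to $p$ for $p>3$, so the action is trivial; the case $p=3$, $|M|=2$ is similar. This is precisely the content one imports from \cite{BCPS}, and we would rely on their Theorem~1.11 to dispose of it rather than redo it.
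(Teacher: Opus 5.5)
Your framing matches the paper's: argue by contradiction, note via Lemma~\ref{lem:onepoint} that $(X,P)$ is an HKG curve, and invoke \cite[Theorems~1.8 and~1.11]{BCPS}. However, you misdescribe what Theorem~1.11 delivers, and so the decisive step is missing. The theorem does not give you group configurations (``$G/M$ in a Borel subgroup'', ``$\mathrm{SL}(2,3)$- or $A_4$-type actions'') in which you then show that $G$ fixes a point. It gives an explicit list of curves, and for $p$ odd and $g\ge2$ the only survivor is $z^4=t^3u-tu^3$ in characteristic $3$. That curve does carry a solvable group of automorphisms (a lift of $\PGL(2,3)$ acting on the $t$-line) which contains an HKG group and moves its point. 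So your plan ``in every such case we will show that $G$ fixes a point'' cannot succeed there. The contradiction has to come from the full group instead. Substituting $t=\iota x$ with $\iota^2=-1$ and rescaling $z$ turns the curve into the genus-three Hermitian curve $y^4=x^3+x$. Its automorphism group $\PGU(3,3)$ is not solvable, which contradicts the assumption that $G=\Aut(X)$ is solvable. Your proposal never identifies this curve or this way of concluding.

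Your self-contained ``key mechanism'' does not fill the gap. Put $M=O_{p'}(G)$ and $T/M=O_p(G/M)\ne1$. The Frattini argument gives $G=M\,N_G(S\cap T)$, and $N_G(S\cap T)\le G_P$ because $S\cap T$ has a unique fixed point. Hence $G=MG_P$, and this, not a ``Frobenius or $\mathrm{A\Gamma L}(1,q)$'' classification, is why $SM/M\lhd G/M$. The same identity shows that if $G$ moves $P$, then $M$ is transitive on $G\cdot P$. That orbit has length $\equiv1\pmod q$, hence at least $q+1$. Consequences:
\begin{itemize}
\item The case where $S$ centralizes $M$ is vacuous, so your ``main obstacle'' is the entire problem.
\item Your claim that Hurwitz forces $|M|\in\{2,3\}$ is false; you appear to be conflating $M$ with the kernel $K$ of the action on $\Omega$.
\end{itemize}
Deferring this case to a misstated Theorem~1.11 leaves the lemma unproved.
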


\begin{proof}
Let $P$ be the unique fixed point of $S$. By Lemma~\ref{lem:onepoint}, the pair $(X,P)$ is an HKG $S$-curve: the quotient $X/S$ is rational and the cover is ramified only at $P$.

Suppose that $G$ is solvable. Since $G$ moves $P$, the pair $(X,P)$ is not an HKG $G$-curve by \cite[Theorem~1.8]{BCPS}, and the solvable extra-automorphism theorem \cite[Theorem~1.11]{BCPS} applies. Because $p$ is odd and $g\ge2$, the only remaining curve in their list is $z^4=t^3u-tu^3$ in characteristic $3$.

Dehomogenizing gives $z^4=t^3-t$. Choose $\iota\in\F_9$ with $\iota^2=-1$ and put $t=\iota x$. Then $t^3-t=-\iota(x^3+x)$, and after rescaling $z$ by a fourth root of $-\iota$ we obtain the genus-three Hermitian curve $y^4=x^3+x$. Its full automorphism group is $\PGU(3,3)$, hence is not solvable. This contradiction proves the assertion.
\end{proof}

For later use we isolate the rational-line consequence of Dickson's classification.

\begin{lemma}\label{lem:dickson}
Let $H\le\PGL(2,k)$ be a finite nonsolvable group whose order is divisible by $p$. Assume that a Sylow $p$-subgroup $U$ of $H$ has order $q=p^a$, and that $H$ has a doubly transitive orbit $B\subset\PP^1(k)$ of size $q+1$ on which $U$ fixes one point and acts regularly on the remaining $q$ points. Then, after a projective change of coordinate,
\[
 H\cong\PSL(2,q)\quad\text{or}\quad\PGL(2,q),
\qquad
 B=\PP^1(\F_q).
\]
\end{lemma}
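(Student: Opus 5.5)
We invoke Dickson's classification of finite subgroups of $\PGL(2,k)$ in characteristic $p$, in the form recorded in \cite{Faber}. Since $p$ divides $|H|$, the subgroup $H$ is $p$-irregular. The possible isomorphism types are then: $p$-semi-elementary groups (an elementary abelian $p$-group extended by a cyclic group), which are solvable; dihedral groups when $p=2$; $A_5$ when $p=3$; and groups conjugate to $\PSL(2,p^b)$ or $\PGL(2,p^b)$. Since $H$ is nonsolvable, the semi-elementary case is excluded. Because $p$ is odd, the dihedral case does not arise.

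First we dispose of the sporadic possibility $H\cong A_5$ with $p=3$. Here the Sylow $3$-subgroup has order $q=3$, so $B$ would be an orbit of size $4$ on which $A_5$ acts doubly transitively. That is impossible: $A_5$ has no subgroup of index $4$, so it has no orbit of size $4$ at all. (For $p=5$, $A_5\cong\PSL(2,5)$ already lies in the main family.)

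Thus, after conjugation, $H=\PSL(2,p^b)$ or $\PGL(2,p^b)$, acting on $\PP^1$ through its natural embedding over $\F_{p^b}\subset k$. A Sylow $p$-subgroup of either group has order $p^b$, so $p^b=|U|=q$. It remains to identify $B$. The Sylow subgroup $U$ fixes exactly one point of $\PP^1(k)$, which is a point of $\PP^1(\F_q)$, say $\infty$ after conjugation. The elements of $U$ are then translations $x\mapsto x+t$ with $t\in\F_q$. By hypothesis $U$ fixes a point of $B$, and nontrivial elements of $U$ have a unique fixed point, so $\infty\in B$. The remaining $q$ points of $B$ form one regular $U$-orbit, say $\{x_0+t:t\in\F_q\}$.

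Next we use that $H$ is transitive on $B$. This gives $|B|=q+1=[H:H_Q]$ for $Q\in B$. For $Q=\infty$ we have $H_\infty=$ the Borel subgroup, of index exactly $q+1$, consistent with this count. Since the $H$-orbit of $\infty$ is $\PP^1(\F_q)$ and $B$ is an $H$-orbit containing $\infty$, we conclude $B=\PP^1(\F_q)$.

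The main obstacle is bookkeeping in the Dickson list: one must ensure that the subgroups isomorphic to $\PSL(2,p^b)$ or $\PGL(2,p^b)$ are \emph{conjugate} in $\PGL(2,k)$ to the standard ones, not merely isomorphic. This is part of the statement in \cite{Faber}, so we cite it there.
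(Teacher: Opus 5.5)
Your proof is correct and follows essentially the paper's route: Faber's classification of $p$-irregular subgroups, exclusion of the exceptional $A_5$ in characteristic $3$ because it has no subgroup of index $4$, comparison of Sylow orders, and conjugacy to the standard linear group over $\F_q$. The only cosmetic difference is in identifying $B$. You take $B$ to be the $H$-orbit of the fixed point of $U$, which lies in $\PP^1(\F_q)$. The paper instead uses that $[H:H_b]=q+1$ is prime to $p$ to show that every $b\in B$ is the fixed point of some Sylow $p$-subgroup. Both arguments are immediate.
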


\begin{proof}
By the classification of finite $p$-irregular subgroups of $\PGL(2,k)$ \cite{Faber}, a nonsolvable such group is isomorphic to $\PSL(2,p^e)$ or $\PGL(2,p^e)$, apart from the exceptional group $A_5$ in characteristic $3$. Comparison of Sylow $p$-orders gives $p^e=q$ in the linear cases. In the exceptional case $q=3$, but $A_5$ has no subgroup of index $4$, so it has no orbit of size $q+1=4$.

Thus $H$ is a standard linear group over $\F_q$, up to conjugacy in $\PGL(2,k)$. Let $b\in B$. Since $H_b$ has index $q+1$, its order is divisible by the full $p$-part $q$, and hence $H_b$ contains a Sylow $p$-subgroup $U_b$. A nontrivial $p$-subgroup of $\PGL(2,k)$ has a unique fixed point on $\PP^1(k)$; therefore that fixed point is $b$. It follows that $B$ is exactly the set of fixed points of the Sylow $p$-subgroups of $H$. For the standard groups $\PSL(2,q)$ and $\PGL(2,q)$ this set is $\PP^1(\F_q)$. A projective change of coordinate therefore identifies $B$ with $\PP^1(\F_q)$.
\end{proof}

\section{The orbit of the \texorpdfstring{$p$}{p}-fixed points}

The orbit reduction in this section follows the framework of \cite[Section~3]{GK10}: one separates the case in which the $p$-fixed orbit is the unique short orbit from the case with one further tame short orbit, and then uses the action on $X/S$ to obtain double transitivity. The difference here is that the strict numerical hypothesis of \cite{GK10} is replaced by the zero-$p$-rank Lemma~\ref{lem:onepoint}; we include the details because this is precisely what allows the weaker bound $|S|>g$.

In Sections~3--6 we prove Theorem~\ref{thm:zerorank}. Starting with the $p$-subgroup in its statement, choose a Sylow $p$-subgroup $S^*$ of $G$ containing it. By Lemma~\ref{lem:onepoint}, $S^*$ has a unique fixed point; this point is fixed by the original subgroup and hence is $P$. Thus $S^*\le G_P$ and $|S^*|>g$. We may therefore replace the original subgroup by $S^*$ and, throughout these sections, denote it again by $S$.

Let $q=|S|$, let $P$ be the unique fixed point of $S$, and let
\[
 \Omega=\{R\in X: R\text{ is fixed by an element of order }p\}.
\]
By Lemma~\ref{lem:onepoint}(1), each Sylow $p$-subgroup has a unique fixed point. Conversely, if a point is fixed by an element of order $p$, it is the fixed point of a Sylow $p$-subgroup containing that element. Two distinct Sylow $p$-subgroups cannot have the same fixed point, since a point stabilizer has a unique Sylow $p$-subgroup. Hence $\Omega$ is naturally identified with the set of Sylow $p$-subgroups of $G$. In particular,
\[
 N_G(S)=G_P=:I,
\]
and $\Omega$ is a $G$-orbit.

The group $S$ fixes $P$ and acts freely on $\Omega\setminus\{P\}$. Thus
\begin{equation}\label{eq:omega-bq}
 |\Omega|=1+bq
\end{equation}
for some integer $b\ge1$. Write
\[
 I=S\rtimes H,
\qquad h=|H|.
\]

\begin{proposition}\label{prop:orbits}
The action of $G$ on $\Omega$ is doubly transitive, and
\[
 |\Omega|=q+1.
\]
Apart from $\Omega$, the group $G$ has at most one short orbit, and such an orbit is tame. If $Q$ belongs to that tame short orbit, put
\[
 w=|G_Q|;
\]
if no such orbit exists, put $w=1$. Then
\begin{equation}\label{eq:master}
 2g-2=(q+1)\left(\frac{h}{w}-1\right),
\end{equation}
and
\begin{equation}\label{eq:master2}
 2g=\frac{(q+1)h}{w}-(q-1).
\end{equation}
Moreover, in the tame-orbit case, $G_Q$ acts semiregularly on $\Omega$, and therefore
\[
 w\mid q+1.
\]
\end{proposition}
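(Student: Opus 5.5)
The plan is to apply the Riemann--Hurwitz formula to the cover $X\to X/G$, feeding in the different exponent \eqref{eq:different} from Lemma~\ref{lem:onepoint}(3) at the points of $\Omega$, and then to squeeze the resulting identity with the hypothesis $q>g$. First I fix the ramification data. Since $X/S\cong\PP^1$ by Lemma~\ref{lem:onepoint}(2), the quotient $X/G$ is rational too. Put $N=|G|$. By \eqref{eq:omega-bq} and $N_G(S)=G_P=I$,
\[
 N/|I|=|\Omega|=1+bq .
\]
Any other short orbit has stabilizer of order prime to $p$, since an element of order $p$ fixing a point puts it in $\Omega$. So every other short orbit is tame, with stabilizer of order $w_j\ge2$. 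Hurwitz then reads
\[
 2g-2=-2N+(1+bq)(2g-2+q+|I|)+\sum_j N\Bigl(1-\tfrac1{w_j}\Bigr).
\]
Using $N=(1+bq)qh$ and $|I|=qh$, this simplifies to
\[
 0=-N+bq(2g-2)+(1+bq)q+\sum_j N\Bigl(1-\tfrac1{w_j}\Bigr).
\]

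Next I bound the number of tame short orbits. Each summand is at least $N/2$. If there were two or more tame orbits, the right-hand side would be strictly positive, which is impossible. So there is at most one tame short orbit. Set $w=w_1$, or $w=1$ if there is none; since $-N+N(1-1/w)=-N/w$, both cases reduce to the single identity
\[
 (1+bq)\Bigl(\frac hw-1\Bigr)=b(2g-2).
\]
Because $\gcd(b,1+bq)=1$, $b$ divides $h/w-1=:bk$, so $2g-2=(1+bq)k$. If $k=0$ then $g=1$, contradicting $g\ge2$, so $k\ge1$. Then $q>g$ gives $(1+bq)k<2q$, which forces $b=1$. Hence $|\Omega|=q+1$, and substituting $b=1$ gives \eqref{eq:master} and \eqref{eq:master2}.

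Double transitivity is now immediate: $G$ is transitive on $\Omega$, and the stabilizer $I\supseteq S$ of $P$ is transitive on the $q$ remaining points, because $S$ acts on them freely and $|\Omega\setminus\{P\}|=q$.

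The step I expect to need the most care is the semiregularity of $G_Q$ on $\Omega$ in the tame case. Suppose $1\ne u\in G_Q$ fixes some $R\in\Omega$; by transitivity I may conjugate so that $R=P$. Then $u$ is a $p'$-element of $I=S\rtimes H$, so after conjugating by an element of $S$ (Schur--Zassenhaus) I may assume $u\in H$. Now $H$ normalizes $S$ and acts on $X/S\cong\PP^1$ as a cyclic tame group, so a nontrivial element fixes exactly two points of $\PP^1$. One of them is the image of $P$, a totally ramified point. For the other I use that $\Omega\setminus\{P\}$ is a single regular $S$-orbit (as $b=1$) and is $H$-stable, so its image is an $H$-fixed point distinct from the image of $P$. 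Hence $u$ fixes no other point of $\PP^1$, and any point of $X$ fixed by $u$ maps into these two images, i.e.\ lies in $\Omega$. Since $Q$ is fixed by $u$ but $Q\notin\Omega$, this is a contradiction. Thus $G_Q$ acts semiregularly on $\Omega$, and $w=|G_Q|$ divides $|\Omega|=q+1$.
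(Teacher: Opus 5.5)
There is a genuine gap in the step that forces $b=1$. Your identity $(1+bq)(h/w-1)=b(2g-2)$ is correct, and so is the exclusion of a second tame orbit. But you then treat $h/w-1$ as an integer, that is, you assume $w\mid h$, and nothing gives this. The only integral form of the identity is $(1+bq)(h-w)=bw(2g-2)$. It yields $b\mid h-w$ and $1+bq\mid w(2g-2)$, not $1+bq\mid 2g-2$.

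Your intermediate conclusion $2g-2=(1+bq)k$ with $k\ge1$ an integer is in fact false for the curves this section is about. For $y^2=x^q-x$ one has $|\Omega|=q+1$ and $0<2g-2=q-3<q+1$; here $h=2(q-1)$ and $w=q+1$. For $y^3=x^{11}-x$ one has $h=30$, $w=12$, $2g-2=18$ and $|\Omega|=12$.

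Once $k$ is only a positive rational, the inequality $(1+bq)k<2q$ no longer bounds $b$. As a numerical identity, the relation is compatible with $b=2$: take $w=1+2q$ and $h=w+2(2g-2)$. Your argument does go through when there is no tame short orbit ($w=1$), where it reproduces the paper's divisor argument.

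What is missing in the tame-orbit case is geometric information about how $I$ permutes the $b$ regular $S$-orbits in $\Omega\setminus\{P\}$. The paper argues in two steps.

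\emph{Regular $I$-orbits are excluded numerically.} Suppose some $R\in\Omega\setminus\{P\}$ had trivial stabilizer in $I$. Then $1+bq\ge|I|+1$. The positive integer $w(2g-2+q)-|I|=(2g-2)w/(1+bq)$ then forces $|I|\,q<2g-2$, which is impossible since $|I|\ge q>g$.

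\emph{Each $S$-orbit is $I$-stable.} So each such $S$-orbit contains a point fixed by a nontrivial tame element $\alpha$, which lies in a cyclic complement $H_R$ of $S$ in $I$. On $X/S\cong\PP^1$, $\alpha$ acts nontrivially, and its two fixed points are the images of $P$ and of that orbit. The cyclic group $H_R$ centralizes $\alpha$ and fixes the image of $P$, so it preserves that orbit. Hence $I=SH_R$ preserves every $S$-orbit in $\Omega\setminus\{P\}$. A nontrivial element of $H$ then fixes the image of $P$ and the images of all $b$ orbits. Since a nontrivial projectivity has at most two fixed points, $b=1$.

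With $b=1$ established this way, the rest of your proposal is correct. That covers double transitivity, the two master equations, and the semiregularity of $G_Q$ on $\Omega$, which is essentially the paper's argument.
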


\begin{proof}
Since $X/S$ is rational, L\"uroth's theorem gives $X/G\cong\PP^1$. Every short orbit different from $\Omega$ is tame, because a point whose stabilizer contains a nontrivial $p$-element belongs to $\Omega$.

Suppose that there are two tame short orbits in addition to $\Omega$. Each contributes at least $|G|/2$ to the Hurwitz different. If $r=|\Omega|$, then $|G|=r|I|$, and Lemma~\ref{lem:onepoint} gives
\[
\begin{aligned}
 2g-2
 &\ge -2|G|+r d_P+|G|\\
 &=r(d_P-|I|)\\
 &=r(2g-2+q),
\end{aligned}
\]
a contradiction. Hence there is at most one short orbit besides $\Omega$.

Assume first that $\Omega$ is the unique short orbit. Hurwitz gives
\[
 2g-2=-2|G|+r d_P
 =r(2g-2+q-|I|).
\]
Thus $r$ divides $2g-2$. On the other hand, \eqref{eq:omega-bq} and $q>g$ give
\[
 r\ge q+1>g-1.
\]
Every proper divisor of $2g-2$ is at most $g-1$, so
\[
 r=2g-2.
\]
Now \eqref{eq:omega-bq} and $q>g$ force $b=1$. Hence $r=q+1$, and $S$ is regular on $\Omega\setminus\{P\}$. Since $G$ is transitive on $\Omega$, the action is doubly transitive. Equation \eqref{eq:master} follows from Hurwitz with $w=1$. In this case $q=2g-3$ and $|I|=2q$, that is, $h=2$; this is the configuration $\PSL(2,5)$, $g=4$, $w=1$ treated in Lemma~\ref{lem:cubic}.

Assume now that there is one further short orbit $\Delta$, and choose $Q\in\Delta$. Put
\[
 a=|I|,
\qquad
 w=|G_Q|.
\]
Hurwitz gives
\[
 2g-2
 =r\left(2g-2+q-\frac{a}{w}\right).
\]
Set
\[
 N=w(2g-2+q)-a.
\]
Then $N>0$ and
\begin{equation}\label{eq:rN}
 r=\frac{(2g-2)w}{N}.
\end{equation}

We first claim that no point of $\Omega\setminus\{P\}$ has trivial stabilizer in $I$. Otherwise, for some $R\in\Omega\setminus\{P\}$ the $I$-orbit of $R$ would have size $a$, so $r\ge a+1$. Equation \eqref{eq:rN} would give
\[
 N<\frac{(2g-2)w}{a}.
\]
Also $N\ge1$ and $r\ge a+1$ give $a<(2g-2)w$, and hence
\[
 N=w(2g-2+q)-a>wq.
\]
Consequently
\[
 aq<2g-2.
\]
But $a\ge q$ and $q>g$, so $aq\ge q^2>2g-2$, a contradiction.

Thus every $R\in\Omega\setminus\{P\}$ is fixed by a nontrivial tame element of $I$. Fix such an $R$ and choose an element $\alpha\in I_R$ of prime order. Since $I_R$ is a tame two-point stabilizer, it is cyclic. By Schur--Zassenhaus, $\alpha$ is contained in a cyclic complement $H_R$ of $S$ in $I$.

The element $\alpha$ induces a nontrivial automorphism of $X/S\cong\PP^1$. It fixes the images of $P$ and of the $S$-orbit $SR$, and these are therefore its two fixed points. Since $H_R$ is cyclic, it centralizes $\alpha$; it also fixes the image of $P$, and therefore preserves the other fixed point of $\alpha$. Hence $H_R$ preserves $SR$, and so does
\[
 I=SH_R.
\]
Thus every $S$-orbit contained in $\Omega\setminus\{P\}$ is $I$-invariant.

Let there be $\nu$ such orbits. Since each of their point stabilizers in $I$ is nontrivial and tame, $h>1$. Choose $1\ne\beta\in H$. The induced nontrivial automorphism of $X/S$ fixes the image of $P$ and the images of all these $\nu$ orbits. A nontrivial projectivity has at most two fixed points, so $\nu=1$. Hence
\[
 |\Omega|=q+1,
\]
and the action of $G$ on $\Omega$ is doubly transitive.

It remains to prove the assertion about $G_Q$. Suppose that a nonidentity $\alpha\in G_Q$ fixes a point of $\Omega$. After conjugating in $G$, we may assume that it fixes $P$. Then $\alpha\in I$. Since $G_Q$ is tame, $\alpha$ has order prime to $p$. On $X/S$, the element $\alpha$ fixes the images of $P$, of the unique $S$-orbit $\Omega\setminus\{P\}$, and of $SQ$. These three points are distinct. Therefore $\alpha$ induces the identity on $X/S$, and hence $\alpha\in S$, a contradiction. Thus $G_Q$ is semiregular on $\Omega$, and $w\mid q+1$.

Finally, substituting $|\Omega|=q+1$, $|I|=qh$, and \eqref{eq:different} into Hurwitz gives \eqref{eq:master}; equation \eqref{eq:master2} is equivalent to it.
\end{proof}

\begin{proposition}[Subgroup form of the orbit reduction]\label{prop:orbits-subgroup}
Let $X$ have $p$-rank zero and genus $g\ge2$, let $\Gamma\le\Aut(X)$ be a finite subgroup with no global fixed point on $X$, and let $U$ be a Sylow $p$-subgroup of $\Gamma$ of order $q>g$. Let $P$ be the unique fixed point of $U$, and let
\[
 \Omega_\Gamma
 =
 \{R\in X:R\text{ is fixed by an element of order }p\text{ in }\Gamma\}.
\]
Then the conclusions of Proposition~\ref{prop:orbits} hold with
\[
 (G,S,\Omega,I,h)
 \quad\text{replaced by}\quad
 (\Gamma,U,\Omega_\Gamma,\Gamma_P,h_\Gamma).
\]
In particular,
\[
 |\Omega_\Gamma|=q+1,
\]
the action of $\Gamma$ on $\Omega_\Gamma$ is doubly transitive, there is at most one further short orbit and it is tame, and if
\[
 \Gamma_P=U\rtimes C_{h_\Gamma}
\]
and $w_\Gamma$ denotes the stabilizer order in the tame short orbit (or $w_\Gamma=1$ if there is none), then
\[
 2g-2
 =
 (q+1)\left(\frac{h_\Gamma}{w_\Gamma}-1\right).
\]
\end{proposition}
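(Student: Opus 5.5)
The plan is to rerun the proof of Proposition~\ref{prop:orbits} verbatim with $\Gamma$ in place of $G$, and to identify exactly where properties of the full group $G$ were used. I will replace each such use by a property that holds for any finite subgroup $\Gamma$ with no global fixed point whose Sylow $p$-subgroup $U$ has order $q>g$.

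The preliminary facts transfer as follows.
\begin{enumerate}
\item By Lemma~\ref{lem:onepoint}(1), $U$ has a unique fixed point $P$, and $X/U\cong\PP^1$ by Lemma~\ref{lem:onepoint}(2), since $q>g$.
\item Every point fixed by an order-$p$ element of $\Gamma$ is the fixed point of a Sylow $p$-subgroup of $\Gamma$ containing it. Distinct Sylow $p$-subgroups of $\Gamma$ have distinct fixed points, because $\Gamma_P$ has a unique Sylow $p$-subgroup. Hence $\Omega_\Gamma$ is a single $\Gamma$-orbit identified with $\operatorname{Syl}_p(\Gamma)$, and $N_\Gamma(U)=\Gamma_P$.
\item $U$ acts semiregularly on $\Omega_\Gamma\setminus\{P\}$, so $|\Omega_\Gamma|=1+bq$.
\end{enumerate}

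The one input that needs care is the different formula \eqref{eq:different}. Its proof in Lemma~\ref{lem:onepoint}(3) only uses $I_0=I$, $I_1=S$, $I_i=S_i$ for $i\ge1$, together with \eqref{eq:genusS}. These identities hold for $\Gamma_P$ and $U$: here $U$ is the Sylow $p$-subgroup of $\Gamma_P$, hence equals $(\Gamma_P)^{(1)}$, and its higher ramification groups are its own. This gives
\[
d_P(\Gamma)=2g-2+q+|\Gamma_P|.
\]

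With this in hand, the remaining steps go through word for word, since they use only four things: Hurwitz for $X\to X/\Gamma$, L\"uroth for $X/\Gamma$ (a quotient of $X/U\cong\PP^1$), the inequality $q>g$, and the action of tame elements of $\Gamma_P$ on $X/U\cong\PP^1$. In order, they are:
\begin{enumerate}
\item excluding two tame short orbits;
\item the divisor argument $r\mid 2g-2$ when $\Omega_\Gamma$ is the only short orbit;
\item the claim that no point of $\Omega_\Gamma\setminus\{P\}$ has trivial $\Gamma_P$-stabilizer;
\item the Schur--Zassenhaus argument showing that each $U$-orbit in $\Omega_\Gamma\setminus\{P\}$ is $\Gamma_P$-invariant;
\item the projectivity fixed-point count forcing $\nu=1$;
\item semiregularity of $\Gamma_Q$ on $\Omega_\Gamma$.
\end{enumerate}
Substituting $|\Omega_\Gamma|=q+1$ and $|\Gamma_P|=qh_\Gamma$ into Hurwitz then yields the displayed formula.

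The main obstacle is confirming that nowhere did the proof invoke maximality of $S$ in the full automorphism group. For example, the reduction at the start of Section~3 replaced the given subgroup by a Sylow subgroup of $G$. Here the hypothesis already gives $U\in\operatorname{Syl}_p(\Gamma)$, and the ``no global fixed point'' assumption is used only to ensure $\Omega_\Gamma\ne\{P\}$, i.e.\ $r>1$. I will check this explicitly at each Hurwitz step. Note that in the unique-short-orbit case $r=1$ would make the formula $2g-2=r(\cdots)$ vacuous, which is exactly why the assumption is required.
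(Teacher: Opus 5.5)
Your proposal is correct and is essentially the paper's proof. The paper likewise argues that the proof of Proposition~\ref{prop:orbits} uses only the action of the group at hand, that $U$, being Sylow in $\Gamma$, is the first ramification group of $\Gamma_P$ (so the different formula carries over), and that the no-fixed-point hypothesis serves only to give $\Omega_\Gamma\ne\{P\}$, i.e.\ $b\ge1$; your step-by-step audit is a more explicit version of the same verbatim transfer, with one small inaccuracy: when $r=1$ the Hurwitz relation is not vacuous but forces $\Gamma=U$, and what actually fails without the hypothesis is the bound $r\ge q+1$.
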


\begin{proof}
The proof of Proposition~\ref{prop:orbits} uses only the action of the group under consideration, not the maximality of the full automorphism group. Indeed, Lemma~\ref{lem:onepoint} gives the unique fixed point of $U$ and the rationality of $X/U$; since $U$ is Sylow in $\Gamma$, it is the first ramification group of $\Gamma_P$. The different formula, the short-orbit estimate, the fixed-point argument on $X/U\cong\PP^1$, and the semiregularity argument then go through verbatim with $\Gamma$ in place of $G$. The hypothesis that $\Gamma$ fixes no point guarantees $\Omega_\Gamma\ne\{P\}$, i.e.\ $b\ge1$.
\end{proof}

Let
\[
 K=\ker(G\to\operatorname{Sym}(\Omega)),
\qquad t=|K|.
\]
Since $K$ fixes two points of $\Omega$, it lies in a two-point stabilizer. Hence $K$ is cyclic and $p\nmid t$. By Proposition~\ref{prop:orbits}, no nonidentity element of $K$ fixes a point outside $\Omega$. Therefore the tame cover
\[
 X\longrightarrow Y:=X/K
\]
is totally ramified at the $q+1$ points of $\Omega$ and unramified elsewhere. If $g_1=g(Y)$, Hurwitz gives
\begin{equation}\label{eq:kernel}
 2g=2tg_1+(t-1)(q-1).
\end{equation}
Since $g<q$,
\[
 (t-1)(q-1)\le2g\le2(q-1),
\]
so
\begin{equation}\label{eq:tbound}
 t\le3.
\end{equation}
If $t=3$, equality holds and
\begin{equation}\label{eq:t3genus}
 g=q-1,
\qquad
 g_1=0.
\end{equation}

The faithful permutation group
\[
 \overline G=G/K
\]
has a cyclic two-point stabilizer. By Kantor--O'Nan--Seitz, and since $p$ is odd, either $\overline G$ has a regular normal subgroup, or $L\le\overline G\le\Aut(L)$ with $L$ one of $\PSL(2,q)$, $\PSU(3,n)$ ($q=n^3$), or $\Ree(n)$ ($p=3$, $q=n^3$) in its natural action. By Lemma~\ref{lem:nofield} below, the nonsolvable possibilities are therefore
\[
 \PSL(2,q),\quad \PGL(2,q),\quad
 \PSU(3,n),\quad \PGU(3,n)\ (q=n^3),\quad
 \Ree(n)\ (p=3,\ q=n^3).
\]
The alternative with a regular normal subgroup, which is then elementary abelian, is solvable; because $K$ is cyclic, it would make $G$ solvable, contrary to Lemma~\ref{lem:nonsolvable}.

\begin{lemma}\label{lem:nofield}
Let $L$ be one of the groups
\begin{enumerate}
\item $\PSL(2,q)$ on $\PP^1(\F_q)$, with $q=p^a$;
\item $\PSU(3,n)$ on the $n^3+1$ isotropic points of $\PP^2(\F_{n^2})$, with $n=p^a$;
\item $\Ree(n)$ on its $n^3+1$ points, with $p=3$ and $n=3^{2a+1}\ge27$.
\end{enumerate}
Let $L\le\overline G\le\Aut(L)$, acting on the same set. If the two-point stabilizers of $\overline G$ are cyclic, then, respectively,
\[
 \overline G\le\PGL(2,q),\qquad \overline G\le\PGU(3,n),\qquad \overline G=\Ree(n).
\]
In particular $\overline G$ is one of $\PSL(2,q)$, $\PGL(2,q)$, $\PSU(3,n)$, $\PGU(3,n)$, $\Ree(n)$. For $n=3$, Kantor--O'Nan--Seitz gives $\overline G=\Ree(3)$ directly.
\end{lemma}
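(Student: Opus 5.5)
The approach is to decompose $\Aut(L)$ as inner-diagonal automorphisms extended by field automorphisms, and to show that any element of $\overline G$ carrying a nontrivial field automorphism produces a noncyclic two-point stabilizer. In every case the structure is $\Aut(L)=\operatorname{P\Gamma}$, the extension of the inner-diagonal group ($\PGL(2,q)$, $\PGU(3,n)$, or $\Ree(n)$ itself, which has no diagonal outer automorphisms) by the cyclic group of field automorphisms. This group has order $a$ in cases (1),(3) and order $2a$ in case (2), where $\F_{n^2}$ is the relevant field. Everything then reduces to showing that the image of $\overline G$ in this field-automorphism quotient is trivial.

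First I fix the two points $0,\infty$ in case (1); in case (2) I take two isotropic points whose stabilizer in $\PGU(3,n)$ is a diagonal torus; in case (3) I take the analogous pair for the Ree group. The stabilizer of the pair in $\Aut(L)$ is then $T\rtimes\langle\phi\rangle$. Here $T$ is the split torus ($\F_q^*$ for $\PGL$; cyclic of order $n^2-1$ for $\PGU(3,n)$, up to the $\gcd(3,n+1)$ bookkeeping; cyclic of order $n-1$ for $\Ree(n)$), and $\phi$ is the Frobenius, which acts on $T$ by $t\mapsto t^p$ and fixes both points. Since $L$ is already doubly transitive, every coset of $\overline G\cap\PGL$ (respectively $\PGU$, $\Ree$) meets the two-point stabilizer. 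Hence if $\overline G$ projects onto a nontrivial subgroup $\langle\phi^e\rangle$ of field automorphisms, the two-point stabilizer $\overline G_{0\infty}$ contains $T_L=T\cap L$ together with some element $\tau\phi^e$ with $\tau\in T$.

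The key step is to show that this group $\langle T_L,\tau\phi^e\rangle$ is not cyclic. Since $T_L$ is abelian and normal in it, cyclicity would force $\tau\phi^e$ to centralize $T_L$, because any automorphism induced by conjugation from an element of a cyclic group is trivial. But $\tau\phi^e$ acts on $T_L$ as $t\mapsto t^{p^e}$. So I must check that this map is nontrivial on $T_L$, that is, $T_L\not\subseteq\F_{p^e}^*$ with $p^e$ a proper subfield order. The sizes are: $|T_L|=(q-1)/\gcd(2,q-1)$ in case (1), of order about $(n^2-1)/\gcd(3,n+1)$ in case (2), and $n-1$ in case (3). Each is larger than $p^e-1$ for a proper divisor $e$, except in tiny cases. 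I expect this small-case check to be the main obstacle.

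The small cases are $q=9$ in case (1), where $|T_L|=4$ and $\F_3^*$ has order $2$, so the map is still nontrivial; $q=p$, which has no field automorphisms; and $n$ small in case (2), where $|T_L|$ must be compared with $\F_{p^e}^*$ for $e\mid 2a$, $e<2a$. In case (2) the worst subcase is $e=a$ with $p^e=n$ and $|T_L|\ge(n^2-1)/3>n-1$, which still gives a nontrivial action. The $3$ here is the maximum of $\gcd(3,n+1)$, so the bound covers every $n$. Case (3) uses $n\ge27$. For $n=3$, $\Ree(3)\cong\operatorname{P\Gamma L}(2,8)$, and the final sentence of the statement simply cites Kantor--O'Nan--Seitz. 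With the field image trivial, $\overline G$ lies in $\PGL(2,q)$, $\PGU(3,n)$, or $\Ree(n)$ respectively. The listed groups are then the only overgroups of $L$ there, since $\PGL/\PSL$ has order $2$, $\PGU/\PSU$ has order $\gcd(3,n+1)\in\{1,3\}$, and $\Ree(n)$ has no diagonal part.
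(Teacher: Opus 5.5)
Your proposal is correct and follows essentially the same route as the paper: double transitivity of $L$ puts an element inducing a nontrivial field automorphism into the two-point stabilizer, that element acts on the torus $L_{P,R}$ by $t\mapsto t^{p^e}$, and an order comparison shows this action is nontrivial, so the two-point stabilizer is nonabelian and hence not cyclic. Your criterion $|L_{P,R}|>p^e-1$ for a proper divisor $e$ holds in every case, for example $(q-1)/2>p^{a/2}-1$ in the linear case, so no separate small-case analysis is actually needed.
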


\begin{proof}
Let $P\ne R$ be two points, and let $\overline G_0$ be the intersection of $\overline G$ with $\PGL(2,q)$, $\PGU(3,n)$, or $\Ree(n)$ respectively. Since $L$ is transitive on ordered pairs of distinct points, $\overline G=L\,\overline G_{P,R}$. Hence, if $\overline G\ne\overline G_0$, then $\overline G_{P,R}$ contains an element $\beta$ inducing a nontrivial field automorphism. We show that $\beta$ does not centralize the two-point stabilizer $C_0:=L_{P,R}\le\overline G_{P,R}$. It follows that $\overline G_{P,R}$ is nonabelian, a contradiction.

\emph{Linear case.} Take $P=\infty$ and $R=0$. Then $C_0=\{x\mapsto\mu^2x:\mu\in\F_q^*\}$, and $\beta(x)=cx^{p^j}$ with $1\le j\le a-1$. One computes
\[
 \beta\circ(x\mapsto\mu^2x)\circ\beta^{-1}=(x\mapsto\mu^{2p^j}x).
\]
If $\beta$ centralized $C_0$, then $\mu^{2(p^j-1)}=1$ for all $\mu\in\F_q^*$, so $q-1\mid2(p^j-1)$. This is impossible, because $2(p^{a-1}-1)<p^a-1$.

\emph{Unitary case.} Use the Hermitian form $x_1\bar y_3+x_2\bar y_2+x_3\bar y_1$, where $\bar y=y^n$, and take $P=\langle e_1\rangle$, $R=\langle e_3\rangle$. The elements of $\SU(3,n)$ fixing $P$ and $R$ also fix $\langle e_2\rangle=\langle e_1,e_3\rangle^\perp$. Hence they are exactly
\[
 h(\lambda)=\operatorname{diag}(\lambda,\lambda^{n-1},\lambda^{-n}),
 \qquad \lambda\in\F_{n^2}^*,
\]
and $C_0$ is the image of $\{h(\lambda)\}$ in $\PSU(3,n)$. The element $h(\lambda)$ is scalar exactly when $\lambda^3=1=\lambda^{n+1}$; these $\lambda$ form a group of order $\mu=\gcd(3,n+1)$.

Write $\beta=\delta\varphi_j$, where $\delta$ is diagonal and $\varphi_j$ raises matrix entries to the power $p^j$, with $1\le j\le 2a-1$. Then
\[
 \beta h(\lambda)\beta^{-1}=h(\lambda^{p^j}).
\]
If $\beta$ centralized $C_0$, then $\lambda^{\mu(p^j-1)}=1$ for all $\lambda$, so $n^2-1\mid\mu(p^j-1)$. If $\mu=1$ this contradicts $p^j-1<n^2-1$. If $\mu=3$, then $p\ne3$, so $p\ge5$, and $3(p^{2a-1}-1)<n^2-1$, again a contradiction.

\emph{Ree case.} Here $\operatorname{Out}(\Ree(n))$ is cyclic of order $2a+1$, generated by the field automorphism $\varphi$. The two-point stabilizer $C_0=L_{P,R}$ is cyclic of order $n-1$. In the standard $7$-dimensional representation it is a diagonal torus $\{h(\lambda):\lambda\in\F_n^*\}$ whose entries are monomials in $\lambda$, and $\varphi$ acts by $h(\lambda)\mapsto h(\lambda^3)$; see \cite{Kl88}.

Choose $P,R$ to be the two standard points fixed by $\varphi$, and write $\beta=c\varphi^j$ with $1\le j\le2a$. Then $c=\beta\varphi^{-j}\in L_{P,R}=C_0$ commutes with $C_0$. If $\beta$ centralized $C_0$, then $\lambda^{3^j}=\lambda$ for all $\lambda\in\F_n^*$. This is impossible, since $3^j-1<n-1$.
\end{proof}

\section{The cases with nontrivial kernel}

\begin{proposition}\label{prop:kernel}
If $K\ne1$, then $X$ is one of the following curves:
\[
 y^2=x^q-x,
\]
or
\[
 y^3=x^q-x,
\qquad q\equiv2\pmod3.
\]
\end{proposition}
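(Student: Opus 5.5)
The plan is to push the problem down to the quotient $Y=X/K$, show that $Y$ is rational, and then read off $X$ as a cyclic Kummer cover of $\PP^1$ branched exactly over $\PP^1(\F_q)$.

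\emph{Setup.} Since $K$ is normal in $G$, the group $\overline G=G/K$ acts on $Y$. This action is faithful: an element acting trivially on $Y$ lies in the Galois group $K$ of $X\to Y$. By Lemma~\ref{lem:nonsolvable}, $\overline G$ is nonsolvable. Moreover $Y$ has $p$-rank zero. Indeed, since $t=|K|$ is prime to $p$, the tame cover gives an injection of the $p$-torsion of $\operatorname{Jac}(Y)$ into that of $\operatorname{Jac}(X)$ by pullback. The image $\overline S$ of $S$ has order $q$, because $K\cap S=1$, and $q>g\ge g_1$. Also $\overline G$ has no global fixed point on $Y$: it moves the image of $P$, since the image $\overline\Omega$ of $\Omega$ is a $\overline G$-orbit of size $q+1$ ($K$ fixes $\Omega$ pointwise).

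\emph{Step 1: $g_1=0$.} Suppose first that $g_1\ge2$. Then Proposition~\ref{prop:orbits-subgroup} applies to $(Y,\overline G,\overline S)$ and gives
\[
 2g_1-2=(q+1)\left(\frac{h}{w}-1\right).
\]
The left side is positive and smaller than $q+1$, which is impossible. Suppose next that $g_1=1$. Then $\overline G$ embeds as a finite subgroup of $E\rtimes\Aut(E,O)$. Such a subgroup is an extension of a finite abelian group of translations by a group of order dividing $24$, hence solvable, again a contradiction.

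So $Y\cong\PP^1$. Then $\overline G\le\PGL(2,k)$, and its Sylow $p$-subgroup $\overline S$ fixes one point of $\overline\Omega$ and acts regularly on the remaining $q$ points (as $S$ does on $\Omega$). By Lemma~\ref{lem:dickson}, after a change of coordinate $\overline G$ is $\PSL(2,q)$ or $\PGL(2,q)$ and $\overline\Omega=\PP^1(\F_q)$. (For $t=3$, the vanishing $g_1=0$ is also given directly by \eqref{eq:t3genus}.)

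\emph{Step 2: Kummer description.} Since $t\in\{2,3\}$ is prime, we can write
\[
 k(X)=k(x)(y),\qquad y^t=\prod_{a\in\F_q}(x-a)^{e_a},\qquad 1\le e_a\le t-1.
\]
Total ramification at every point of $\PP^1(\F_q)$ and nowhere else forces $e_\infty\equiv-\sum_a e_a\not\equiv0\pmod t$.

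If $t=2$, then all $e_a=1$, and $\infty$ is branched automatically because $q$ is odd. This gives $y^2=x^q-x$.

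If $t=3$, the exponents are governed by equivariance. Conjugation gives a homomorphism $G\to\Aut(K)\cong C_2$, and the subgroup $\PSL(2,q)$ of $\overline G$, being perfect for $q\ge4$ (its image in $C_2$ is trivial), acts trivially on $K$. Hence the branch-point character $a\mapsto e_a$ is $\PSL(2,q)$-invariant. As $\PSL(2,q)$ is transitive on $\PP^1(\F_q)$, all $e_a$ (including $e_\infty$) are equal, say to $e\in\{1,2\}$; replacing $y$ by $y^2/(x^q-x)$ if necessary, we may take $e=1$. The condition at $\infty$ becomes $(q+1)e\equiv0\pmod3$, i.e.\ $q\equiv2\pmod3$, and we get $y^3=x^q-x$.

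\emph{Main obstacle.} The step I expect to need the most care is Step~1: transferring the hypotheses to $Y$ (the $p$-rank, the absence of a global fixed point, and the regular action of $\overline S$ on $\overline\Omega$ minus a point) and excluding $g_1=1$. The case $q=3$ in Step~2, where $\PSL(2,3)$ is not perfect, should also be checked separately. There, \eqref{eq:tbound} together with $g\ge2$, $g<q$ and \eqref{eq:kernel} leaves only $t=2$, $g=2$, which is covered by the $t=2$ computation.
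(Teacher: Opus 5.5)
\textbf{Verdict: genuine gap in Step~1 when $g_1\ge2$.} The identity $2g_1-2=(q+1)\bigl(\tfrac{h}{w}-1\bigr)$ from Proposition~\ref{prop:orbits-subgroup} does not force $2g_1-2\ge q+1$, because $h/w$ is in general not an integer. Proposition~\ref{prop:orbits} gives $w\mid q+1$, not $w\mid h$. Writing $q+1=ws$, the identity only says $2g_1-2=s(h-w)$, and this contradicts $0<2g_1-2<q+1$ only if $w=1$.

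The classical curves show this is a real obstruction, not a technicality. The Hermitian curve with parameter $n$ satisfies every hypothesis of Proposition~\ref{prop:orbits-subgroup}, with $q=n^3$, $h=n^2-1$, $w=n^2-n+1$ and $2g-2=(n+1)(n-2)$, which lies strictly between $0$ and $q+1$. For $n=3$ this gives $h/w=8/7$, $2g-2=4$, $q+1=28$. Likewise $y^2=x^q-x$ has $h=2(q-1)$, $w=q+1$ and $2g-2=q-3$. So the orbit identity alone can never give $g_1\le1$. Concretely, nothing in your argument excludes the case where $Y$ is Hermitian and $X\to Y$ is a double cover branched at its $n^3+1$ rational points. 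This case is numerically consistent with \eqref{eq:kernel}, since $g=n(n-1)+\tfrac{n^3-1}{2}<n^3=q$.

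\textbf{What is missing, and how the paper supplies it.} The missing ingredient is group-theoretic. The paper first identifies $\overline G$ via Kantor--O'Nan--Seitz and Lemma~\ref{lem:nofield}, and then treats $t=2$, $g_1>0$ case by case.
\begin{itemize}
\item In the linear case, the complement acts irreducibly on $\overline S\cong(\F_q,+)$. Hence every nontrivial lower ramification group of $\overline S$ on $Y$ equals $\overline S$. Lemma~\ref{lem:onepoint} then gives $2g_1=(q-1)(j-1)$ with $j\ge2$, so $g\ge\tfrac32(q-1)\ge q$.
\item In the unitary case and for $\Ree(n)$ with $n>3$, the preimage of the simple socle $L$ is a central extension by $C_2$. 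It splits because the Schur multiplier has odd order. An involution of $L$ fixing a point of $\Omega$, together with $K$, then gives $C_2\times C_2$ inside a point stabilizer, whose subgroups of order prime to $p$ must be cyclic.
\item $\Ree(3)$ is settled by the orbit identity with $h=2$.
\end{itemize}

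\textbf{What is sound, and two small corrections.} The rest of your proposal holds up. Your $g_1=1$ case, via solvability of finite subgroups of $E\rtimes\Aut(E,O)$, is a clean uniform argument, and Step~2 agrees with the paper. First, moving the image of $P$ does not by itself show that $\overline G$ has no global fixed point on $Y$. Use instead that $\overline G$ is nonsolvable while point stabilizers are solvable. Second, for $q=3$ the case $K\ne1$ is empty: $t=2$ gives $g=2g_1+1$, which is odd, and $t=3$ is excluded by $p\nmid t$. It is not "covered by the $t=2$ computation" as you state.
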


\begin{proof}
Assume first that $t=2$. Then \eqref{eq:kernel} gives
\begin{equation}\label{eq:t2}
 g=2g_1+\frac{q-1}{2}.
\end{equation}
The curve $Y$ has $p$-rank zero, since the pullback on the etale $p$-torsion of the Jacobian is injective for a cover of degree prime to $p$.

We show that $g_1=0$. Suppose, to the contrary, that $g_1>0$. Suppose first that
\[
 \overline G\cong\PSL(2,q)
 \quad\text{or}\quad
 \PGL(2,q).
\]
The complement in a point stabilizer acts irreducibly on the elementary abelian Sylow subgroup $S\cong(\F_q,+)$. Indeed, the field generated over $\F_p$ by the scalar complement is $\F_q$. Hence every nontrivial lower ramification group for the $S$-action on $Y$ is equal to $S$. If the last positive lower group is $S_j$, then $j\ge2$ because $g_1>0$, and Lemma~\ref{lem:onepoint} applied to $Y$ (which is allowed for any genus) gives
\[
 2g_1=(q-1)(j-1).
\]
Together with \eqref{eq:t2}, this yields
\[
 g\ge\frac{3}{2}(q-1)\ge q,
\]
a contradiction.

Suppose next that $\overline G$ is unitary. Let $L\cong\PSU(3,n)$ be the natural simple subgroup and let $\widetilde L$ be its inverse image in $G$. Since $K\cong C_2$, the extension is central. By the Schur-multiplier fact used in \cite[Lemma~2.7]{GK10}, it splits:
\[
 \widetilde L\cong L\times C_2.
\]
A point stabilizer in $L$ has even order, so an involution of $L$ fixing a point of $\Omega$, together with $K$, gives a subgroup $C_2\times C_2$ in a point stabilizer of $G$. This is impossible because every subgroup of order prime to $p$ in a point stabilizer is cyclic. The same argument applies to $\Ree(n)$ for $n>3$.

It remains to consider $\overline G\cong\Ree(3)$. Here $q=27$, and \eqref{eq:t2} with $g<27$ gives $g_1\le6$. If $g_1=1$, a point stabilizer of an elliptic curve in characteristic $3$ cannot contain a $3$-subgroup of order $27$. Assume $g_1\ge2$. The group $\Ree(3)\cong\PGL(2,8)\rtimes C_3$ is nonsolvable, so it cannot fix a point of $Y$, since a point stabilizer on a curve of genus at least two is solvable. Proposition~\ref{prop:orbits-subgroup} therefore applies to its action on $Y$. The tame complement in its natural point stabilizer has order $2$, and hence
\[
 2g_1-2=28(2/w-1).
\]
For $w=1$ this gives $g_1=15$, while $w=2$ gives $g_1=1$; larger $w$ gives negative genus. This contradicts $2\le g_1\le6$.

Thus $Y\cong\PP^1$. The group $\overline G\le\PGL(2,k)$ is nonsolvable and acts doubly transitively on the $q+1$ branch points. Lemma~\ref{lem:dickson} identifies this branch set with $\PP^1(\F_q)$. A double cover of $\PP^1$ branched exactly at these $q+1$ points is unique up to isomorphism, since $\operatorname{Pic}(\PP^1)$ has no $2$-torsion. Hence
\[
 X\cong\{y^2=x^q-x\}.
\]

Now suppose that $t=3$. Then $p\ne3$, and \eqref{eq:t3genus} gives
\[
 Y\cong\PP^1,
\qquad g=q-1.
\]
Again $\overline G$ is nonsolvable and has a doubly transitive orbit of size $q+1$. Lemma~\ref{lem:dickson} gives, after a projective change of coordinate,
\[
 \overline G\cong\PSL(2,q)\quad\text{or}\quad\PGL(2,q),
\qquad
 \Omega/K=\PP^1(\F_q).
\]

The conjugation action of $G$ on $K\cong C_3$ factors through $\overline G$. The subgroup $\PSL(2,q)$ is perfect for $q\ge5$, so it acts trivially on
\[
 \Aut(C_3)\cong C_2.
\]
Thus the inverse image of $\PSL(2,q)$ centralizes $K$.

Write the cyclic cover as
\[
 y^3=f(x).
\]
The local Kummer exponents at the $q+1$ branch points are all equal modulo $3$, because $\PSL(2,q)$ is transitive on the branch set and centralizes the deck group. Their sum is zero modulo $3$, so
\[
 3\mid q+1.
\]
After replacing $y$ by $y^{-1}$ at the level of Kummer characters if necessary, all exponents may be taken equal to $1$. Since $\PP^1$ has no nontrivial $3$-torsion in its Picard group, the cover is the unique cyclic cubic cover with this branch divisor. In coordinates for which the branch set is $\PP^1(\F_q)$, this gives
\[
 X\cong\{y^3=x^q-x\}.
\]
In particular, $q\equiv2\pmod3$.
\end{proof}

\section{The faithful action}

Assume from now on that $K=1$. If
\[
 q>\frac{p}{p-1}g,
\]
then \cite[Theorem~1.1]{GK10} already gives one of the classical families. Hence, to find any additional curve, it is enough to work in the strip
\begin{equation}\label{eq:strip}
 \frac{p-1}{p}q\le g<q.
\end{equation}
We show that the faithful action is impossible in this strip.

\subsection{The linear case}

Suppose
\[
 G\cong\PSL(2,q)
 \quad\text{or}\quad
 \PGL(2,q)
\]
in its natural action on $\Omega$. The Sylow subgroup is elementary abelian, and the cyclic complement of order
\[
 h=\frac{q-1}{2}
 \quad\text{or}\quad
 h=q-1
\]
acts irreducibly on it. Hence every nontrivial lower ramification group at $P$ is equal to $S$. If $j$ is the last lower jump, then
\begin{equation}\label{eq:linear-genus}
 2g=(q-1)(j-1).
\end{equation}
For an order-$p$ quotient the unique jump is prime to $p$, so $p\nmid j$. Since $g<q$, one has $j=2$ or $j=3$.

If $j=2$, then
\[
 g=\frac{q-1}{2}<\frac{p-1}{p}q,
\]
contrary to \eqref{eq:strip}. Thus
\begin{equation}\label{eq:linear-new}
 j=3,\qquad g=q-1,\qquad p\ne3.
\end{equation}
Equation \eqref{eq:master} gives
\[
 w=\frac{q+1}{6}
 \quad\text{for }\PSL(2,q),
 \qquad
 w=\frac{q+1}{3}
 \quad\text{for }\PGL(2,q),
\]
and therefore
\begin{equation}\label{eq:qmod3}
 q\equiv2\pmod3.
\end{equation}

\begin{lemma}\label{lem:cubic}
In the situation \eqref{eq:linear-new},
\[
 X\cong\{y^3=x^q-x\}.
\]
Moreover, for $q=5$ the curve $y^3=x^5-x$ has a unique trigonal pencil.
\end{lemma}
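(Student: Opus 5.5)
The plan is to use the Artin--Schreier description of the Sylow $p$-subgroup $S$ together with the equivariance forced by the tame complement. The aim is first to show $X\cong\{w^q-w=x^3\}$, and then to swap the names of the variables.

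By Lemma~\ref{lem:onepoint}, $X/S\cong\PP^1$ and $X\to X/S$ is totally ramified only at $P$. The group $S\cong(\F_q,+)$ is elementary abelian, and by \eqref{eq:linear-new} every nontrivial lower ramification group at $P$ equals $S$, with unique jump $j=3$. Hence every order-$p$ quotient of $S$ corresponds to an Artin--Schreier extension of $k(x)$ that is ramified only at $\infty$ with conductor jump $3$. Choose the coordinate $x$ on $X/S$ with the image of $P$ at $\infty$ and the second fixed point of the cyclic complement $H=\langle\eta\rangle$ at $0$. Then $\eta$ acts by $x\mapsto\mu x$, where $\mu$ has order $h\in\{(q-1)/2,\,q-1\}$.

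Reduced Artin--Schreier theory then gives
\[
 k(X)=k(x)\bigl(\wp_q^{-1}\text{ or }\wp^{-1}(V)\bigr),
\]
where $V$ is an $\F_p$-space of order $q$ consisting of reduced polynomials without constant term. Each nonzero element of $V$ has degree exactly $3$, because $p\ne3$ and every jump equals $3$. Thus
\[
 V=\{ax^3+\varphi(a)x^2+\psi(a)x:\ a\in A\},
\]
where $A\subset k$ is the set of leading coefficients. The map $f\mapsto a$ is injective, and $\varphi,\psi\colon A\to k$ are $\F_p$-linear. Since $\eta$ normalizes $S$, uniqueness of reduced representatives shows that $V$ is stable under $x\mapsto\mu x$.

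I would then proceed in two steps.
\begin{enumerate}
\item The set $A$ is stable under multiplication by $\mu^3$. Because $q\equiv2\pmod3$, one has $3\nmid q-1$, so $\mu^3$ has the same order as $\mu$. That order is at least $(q-1)/2$, so $\mu^3$ generates $\F_q$ over $\F_p$ (the argument already used for irreducibility of the complement). Hence $A=a_0\F_q$, and after rescaling $x$ we may take $A=\F_q$.
\item The equivariance gives $\varphi(\lambda a)=\lambda^{e}\varphi(a)$ for $\lambda\in\langle\mu^3\rangle$, with $3e\equiv2$, and similarly $\psi(\lambda a)=\lambda^{e'}\psi(a)$ with $3e'\equiv1$, modulo the order of $\mu$.
\end{enumerate}

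Step 2 is the main obstacle: the lower terms $\varphi,\psi$ must vanish. Write an $\F_p$-linear map $\F_q\to k$ as $\sum_i c_ia^{p^i}$ with $0\le i<a$. The twisting condition forces $c_i=0$ unless $3p^i\equiv2$ (for $\varphi$), respectively $3p^i\equiv1$ (for $\psi$), modulo $q-1$, or modulo $(q-1)/2$ in the $\PSL$ case.

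Size considerations reduce each congruence to a few equalities.
\begin{itemize}
\item For $\psi$: $3p^i-1\in\{q-1,\,2(q-1)\}$ or half of these values, i.e.\ $3p^i=q$, $3p^i=2q-1$, or their halved analogues.
\item For $\varphi$: $3p^i-2\in\{q-1,\,2(q-1)\}$ or half of these values.
\end{itemize}
Each of these is excluded by reducing modulo $p$, using $p\ne3$ and $q\ge5$. I expect to check the halved $\PSL$ variants, and the boundary case $i=0$, by hand. Once $\varphi=\psi=0$, we get $V=\F_q x^3$, so $k(X)=k(x,w)$ with $w^q-w=x^3$.

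Finally, interchanging the roles of the two variables, this is exactly the curve $y^3=x^q-x$. For $q=5$ the uniqueness of the trigonal pencil follows from the same model. Namely, $g=4$, and a genus-$4$ curve has at most two $g^1_3$. Two distinct ones would be interchanged or fixed by $\Aut(X)$, and the order-$3$ Kummer automorphism together with the computed $g=q-1$ pins down the pencil $|3\cdot\infty|$ via the function $x$.
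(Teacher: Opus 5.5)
Your reduction matches the paper's: the leading-coefficient map, the $\F_q$-structure coming from $\mu^3$, and the covariance condition $3p^i\equiv j\pmod h$ on the lower coefficients. It correctly handles every case with $q>5$, as well as $\PGL(2,5)$. The gap is in the boundary case $i=0$ that you defer, and congruences cannot close it.

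\textbf{The case $q=5$, $G\cong\PSL(2,5)$.} Here $h=2$, $\mu=-1$ and $\lambda=\mu^3=-1$. The condition for the linear term is $3\equiv1\pmod 2$, which holds, so your twisting condition $\psi(-a)=-\psi(a)$ is automatic. Equivalently, $3p^i-1=(q-1)/2$ has the solution $i=0$, $q=5$, so the claim that every such equality is excluded modulo $p$ is false. Equivariance therefore yields only the family $v^5-v=u^3+cu$ with $c\in k$ arbitrary. Every member carries the same local group $S\rtimes C_2$, via $(u,v)\mapsto(-u,-v)$. This case is not excluded by the earlier numerics: it is the configuration $\PSL(2,5)$, $g=4$, $w=1$ singled out in the proof of Proposition~\ref{prop:orbits}. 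No local argument can force $c=0$. You must use the global hypothesis that $G$ fixes no point, and your proposal never invokes it.

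\textbf{How the paper handles it.} For every $c$ one has $(du)=6P_\infty$. Hence $du,\ v\,du,\ v^2du,\ u\,du$ span the canonical space, and the canonical curve lies on the cone $Z_0Z_2=Z_1^2$. The curve is not hyperelliptic, by Castelnuovo--Severi, so it has a unique trigonal pencil, namely the one given by $v$. If $c\ne0$, the finite ramification points of $v$ (where $3u^2+c=0$) have index $2$. Then $P_\infty$ is the only index-$3$ point of this unique pencil, so all of $G$ fixes it, a contradiction.

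\textbf{The final assertion of the lemma.} Your sketch of uniqueness of the trigonal pencil is also not a proof. Knowing that $\Aut(X)$ permutes at most two $g^1_3$'s, and that the Kummer automorphism preserves the pencil of $x$, does not exclude a second pencil. What is needed is that the canonical quadric is a cone, i.e.\ $K\sim 2D$ for a trigonal fibre $D$. On $y^3=x^5-x$ this follows, for example, from $(dx)=-4P_\infty+2\sum_{b\in\F_5}P_b$ and $(y)=\sum_{b\in\F_5}P_b-5P_\infty$, which give $K\sim6P_\infty=2\cdot 3P_\infty$. Moreover, to run the argument that kills $c$, you need this uniqueness on the whole $c$-family, not only at $c=0$.
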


\begin{proof}
Write $q=p^a$. By \eqref{eq:qmod3}, $p\equiv2\pmod3$ and $a$ is odd. Let
\[
 h=\frac{q-1}{2}
 \quad\text{or}\quad
 h=q-1
\]
according as $G$ is $\PSL(2,q)$ or $\PGL(2,q)$. Choose a coordinate $u$ on $X/S$ such that the unique branch point is infinity and a generator of the complement acts by
\[
 u\longmapsto\zeta u,
\]
where $\zeta$ has order $h$.

We first note that
\begin{equation}\label{eq:fieldgenerated}
 \F_p(\zeta^3)=\F_q.
\end{equation}
Indeed, $\gcd(3,h)=1$, so $\zeta^3$ also has order $h$. If $\zeta^3$ belonged to a proper subfield $\F_{p^d}$, then $h\le p^d-1$. Since $a$ is odd, every proper divisor $d$ of $a$ satisfies $d\le a/3$, and
\[
 p^d-1\le p^{a/3}-1<\frac{p^a-1}{2}\le h,
\]
a contradiction.

The elementary abelian extension $k(X)/k(u)$ is represented by an $a$-dimensional $\F_p$-space $V$ of reduced Artin--Schreier polynomials. Since the unique lower jump is $3$, every nonzero element of $V$ has degree exactly $3$. Hence the leading-coefficient map on $V$ is injective.

Let $W$ be its image. The complement preserves $V$, so multiplication by $\zeta^3$ preserves $W$. By \eqref{eq:fieldgenerated}, $W$ is an $\F_q$-vector space. Since
\[
 \dim_{\F_p}W=a,
\]
we have $W=c\F_q$ for some $c\ne0$. After scaling $u$, we may assume $W=\F_q$. Thus
\begin{equation}\label{eq:ASspace}
 V=
 \{bu^3+L_2(b)u^2+L_1(b)u:b\in\F_q\},
\end{equation}
where $L_1,L_2$ are $\F_p$-linear maps. Write
\[
 L_j(b)=\sum_{i=0}^{a-1}c_{j,i}b^{p^i},
 \qquad j=1,2.
\]
Invariance under $u\mapsto\zeta u$ gives
\begin{equation}\label{eq:covariance}
 c_{j,i}\ne0
 \quad\Longrightarrow\quad
 3p^i\equiv j\pmod h.
\end{equation}

Assume first that $q>5$. If $p\ge7$, then for every $0\le i\le a-1$ and $j\in\{1,2\}$,
\[
 0<3p^i-j\le3p^{a-1}-1<\frac{p^a-1}{2}\le h,
\]
so \eqref{eq:covariance} is impossible. Since $p\equiv2\pmod3$, it remains to consider $p=5$. Then $a\ge3$. For $i\le a-2$ the same inequality applies. For $i=a-1$, the congruence is impossible if $h=q-1$. If $h=(q-1)/2$, divisibility would force
\[
 3\cdot5^{a-1}-j=\frac{5^a-1}{2},
\]
and hence
\[
 5^{a-1}=2j-1\in\{1,3\},
\]
again impossible. Thus $L_1=L_2=0$, and
\[
 V=\F_q u^3.
\]
The corresponding extension is
\[
 v^q-v=u^3,
\]
which is the required curve after interchanging the coordinate names.

It remains to consider $q=5$. In the $\PGL(2,5)$ case, \eqref{eq:covariance} again gives $L_1=L_2=0$. In the $\PSL(2,5)$ case, the only possible lower term is linear, so
\begin{equation}\label{eq:genus4family}
 v^5-v=u^3+cu
\end{equation}
for some $c\in k$.

We show that $c=0$. The map defined by $v$ has degree $3$. Let $P_\infty$ be the unique point above infinity for the Artin--Schreier map $u:X\to\PP^1$. Then
\[
 \operatorname{ord}_{P_\infty}(u)=-5,
 \qquad
 \operatorname{ord}_{P_\infty}(v)=-3.
\]
The different exponent of the degree-$5$ cover is $(5-1)(3+1)=16$, and hence
\[
 (du)=6P_\infty.
\]
Therefore
\[
 du,
 \quad v\,du,
 \quad v^2du,
 \quad u\,du
\]
form a basis of the canonical space. In the associated canonical coordinates one has
\[
 Z_0Z_2=Z_1^2.
\]
The curve is not hyperelliptic: otherwise its degree-$2$ map and the degree-$3$ map defined by $v$ would be independent, and Castelnuovo--Severi would give $g\le2$. Hence the canonical model is contained in a unique quadric, and the displayed quadric is a cone. Its ruling gives the unique trigonal pencil. This holds for every $c\in k$, including $c=0$, which proves the last assertion of the lemma.

If $c\ne0$, the finite ramification points of the degree-$3$ map $v$ are given by
\[
 3u^2+c=0
\]
and have ramification index $2$, whereas $P_\infty$ has index $3$. Thus $P_\infty$ is the unique ramification point of index $3$ of the unique trigonal pencil. Every automorphism of $X$ must fix it, contradicting the hypothesis on $G$. Therefore $c=0$, and \eqref{eq:genus4family} is again the curve $y^3=x^5-x$.
\end{proof}

The curve in Lemma~\ref{lem:cubic} has the tame deck transformation
\[
 (x,y)\longmapsto(x,\zeta_3 y),
\]
which fixes every point lying over $\PP^1(\F_q)$ and hence acts trivially on $\Omega$. Therefore the action of the full automorphism group on $\Omega$ has a nontrivial kernel of order $3$, contradicting $K=1$. Thus the linear case does not occur in the strip.

\subsection{The unitary case}

Suppose that a subgroup
\[
 U\cong\PSU(3,n)
\]
acts naturally on $\Omega$, where $q=n^3$ and $n$ is odd. Put
\[
 \mu=\gcd(3,n+1).
\]
The group $U$ has no global fixed point on $X$, because a point stabilizer is solvable. Hence Proposition~\ref{prop:orbits-subgroup} applies to $U$. If $w_U$ denotes the stabilizer order in its tame short orbit, with $w_U=1$ when no such orbit exists, then
\begin{equation}\label{eq:unitary}
 2g=
 \frac{(n^3+1)(n^2-1)}{\mu w_U}
 -(n^3-1).
\end{equation}
By \cite[Lemma~2.6(i)]{GK10}, $w_U$ is odd and
\[
 w_U\mid n+1
 \quad\text{or}\quad
 w_U\mid\frac{n^2-n+1}{\mu}.
\]

If $w_U\mid n+1$, then $w_U\le(n+1)/2$. Combining \eqref{eq:unitary} with $g<n^3$ gives
\[
 \frac{2(n-1)}{\mu}<3.
\]

If $\mu=1$ this forces $n<5/2$, which is impossible. Hence $\mu=3$, so $n<11/2$ and $3\mid n+1$, which gives $n=5$. Then $w_U\in\{1,3\}$. For $w_U=1$, \eqref{eq:unitary} gives $g=442>n^3=125$. For $w_U=3$ it gives $g=106$, which lies in the strip $[100,125)$.

Thus the only possibility compatible with the strip is
\begin{equation}\label{eq:106}
 n=5,\qquad w_U=3,\qquad g=106.
\end{equation}

If
\[
 w_U=\frac{n^2-n+1}{\mu r},
\]
with $r$ odd, then
\begin{equation}\label{eq:unitary-r}
 2g=(n-1)\left(r(n+1)^2-(n^2+n+1)\right).
\end{equation}
For $r\ge3$ the right-hand side is larger than $2n^3$, contrary to $g<q=n^3$. For $r=1$ one gets
\[
 g=\frac{n(n-1)}2
 <
 \frac{p-1}{p}n^3,
\]
contrary to the lower bound in \eqref{eq:strip}. Thus only \eqref{eq:106} remains. The same reduction treats $\PGU(3,n)$ by passing to its subgroup $\PSU(3,n)$.

\begin{lemma}\label{lem:106}
The case
\[
 n=5,
 \qquad w_U=3,
 \qquad g=106
\]
does not occur.
\end{lemma}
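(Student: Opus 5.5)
The plan is to kill the configuration $n=5$, $w_U=3$, $g=106$ using only the lower ramification filtration of the Sylow subgroup at $P$, with no further global geometry. Here $p=5$ and $q=125$. The group $S$ is the Sylow $5$-subgroup of $\PSU(3,5)$, a nonabelian group of order $125$ with centre $Z=[S,S]$ of order $5$ and $S/Z\cong C_5^2$. The point stabilizer of $U$ is $U_P=S\rtimes C_h$ with $h=(n^2-1)/\mu=8$. As a check, \eqref{eq:unitary} gives $2g=126\cdot24/9-124=212$. Since $X$ has $p$-rank zero and $125>106$, Lemma~\ref{lem:onepoint} gives
\[
 212=2g=\sum_{i\ge2}\bigl(|S_i|-1\bigr),\qquad S_i=S_P^{(i)}.
\]

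First I determine which subgroups can occur as $S_i$. Each $S_i$ is normal in $U_P$, so it is a normal subgroup of $S$ invariant under the complement $C_8$. In the torus description from Lemma~\ref{lem:nofield}, the complement acts on $S/Z\cong\F_{25}$ by multiplication by an element $\zeta$ of order $8$. Since $8\nmid 4$, we have $\F_5(\zeta)=\F_{25}$, so $S/Z$ is an irreducible $C_8$-module.

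Now let $N$ be a nontrivial invariant normal subgroup of $S$. Every nontrivial normal subgroup of a $p$-group meets the centre, and $|Z|=p$, so $N\supseteq Z$. Irreducibility then forces $N=Z$ or $N=S$. Hence $S_i\in\{S,Z,1\}$ for every $i$.

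Let $a=\#\{i\ge2:S_i=S\}$ and $b=\#\{i\ge2:S_i=Z\}$. The genus identity becomes $124a+4b=212$. The only solutions are $(a,b)=(0,53)$ and $(1,22)$. Write $i_1$ for the last index with $S_i=S$ and $i_2$ for the last index with $S_i=Z$. This gives $(i_1,i_2)=(1,54)$ or $(2,24)$.

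Next I choose an abelian subgroup $A$ with $Z<A<S$ and $|A|=25$; such a subgroup exists since any subgroup of order $p^2$ containing $Z$ will do. Its lower filtration is $A_i=A\cap S_i$. So $A_i=A$ for $i\le i_1$, $A_i=Z$ for $i_1<i\le i_2$, and $A_i=1$ afterwards. The lower jumps of $A$ are therefore $i_1$ and $i_2$. By Herbrand's function, the second upper jump is
\[
 \varphi(i_2)=i_1+\frac{i_2-i_1}{[A:Z]}=i_1+\frac{i_2-i_1}{5}.
\]
This equals $1+53/5$ in the first case and $2+22/5$ in the second, so in neither case is it an integer. This contradicts the Hasse--Arf theorem for the abelian group $A$, and hence the configuration does not occur.

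The step needing the most care is the claim that the complement of order $8$ acts irreducibly on $S/Z$. This requires identifying its action as multiplication by an element generating $\F_{25}$. I would verify it directly from $h(\lambda)=\operatorname{diag}(\lambda,\lambda^{n-1},\lambda^{-n})$ acting on the root group of upper unitriangular matrices, where the induced character on $S/Z\cong\F_{n^2}$ is $\lambda\mapsto\lambda^{2-n}$ up to conjugation. I would then check that the image of this character has order $8$ on the relevant quotient modulo scalars. If that check failed, the fallback is to allow also the at most two invariant intermediate subgroups of order $25$. These are themselves abelian, so the same Hasse--Arf computation applies to the modified filtration and again yields non-integral upper jumps.
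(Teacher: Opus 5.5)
Your proof is correct. It agrees with the paper up to the reduction to the two filtrations $(i_1,i_2)=(1,54)$ and $(2,24)$, and the final step takes a genuinely different route.

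\textbf{The shared part.} You reach $S_i\in\{S,Z,1\}$ from the fact that every nontrivial normal subgroup of a $p$-group contains the order-$p$ centre; the paper uses a Frattini argument instead. Your count also makes the paper's ``central stage cannot be skipped'' step unnecessary, since $124a=212$ has no solution anyway.

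\textbf{How the paper finishes.} It treats the two cases separately.
\begin{enumerate}
\item For $(2,24)$ it uses the tangent character: $C_8$ acts on $S_2/S_3\cong S/Z$ through $\chi^2$, whose order divides $4$, contradicting faithfulness.
\item For $(1,54)$ it argues globally. Hurwitz gives $X/Z\cong\PP^1$, so $z^5-z=f(t)$ with $\deg f=54$. Lifting a translation would make the degree-$53$ polynomial $f(t+c)-f(t)$ an Artin--Schreier coboundary, which is impossible.
\end{enumerate}

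\textbf{How you finish.} You apply Hasse--Arf to an abelian $A\cong C_5^2$ with $Z<A<S$. Its lower filtration $A\cap S_i$ has jumps $i_1,i_2$, so integrality of $\varphi_A(i_2)$ forces $5\mid i_2-i_1$. This fails for both $53$ and $22$.

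\textbf{Comparison.} Your argument handles both cases at once and is purely local. It does not use the genus of $X/Z$ or the tangent character. It does not really need the $C_8$-action either: irreducibility holds, verified in the paper exactly as you propose, but your fallback also works. Applying the congruence to an invariant $M$ of order $25$ and to a second $A'\ne M$ gives $5\mid b$ and $5\mid c$. Then $124a\equiv 212\pmod 5$, which is impossible for $a\le1$. Your route also shows that the obstruction is just the Hasse--Arf congruence; the paper's degree-$53$ computation is essentially an explicit instance of it. The price is invoking Hasse--Arf, whose general proof rests on local class field theory. The paper instead stays within elementary Artin--Schreier arguments of the kind it uses elsewhere, for example in the case $Z=L$ of Proposition~\ref{prop:local}.
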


\begin{proof}
A Sylow $5$-subgroup $S$ of $\PSU(3,5)$ is extraspecial of order $125$, and its natural point stabilizer has structure
\[
 5^{1+2}:8;
\]
see \cite{Atlas}. Put
\[
 Z=Z(S)=[S,S].
\]
Then
\[
 |Z|=5,
 \qquad
 S/Z\cong(\F_5)^2,
\]
and a cyclic complement $H\cong C_8$ acts faithfully and irreducibly on $S/Z$.

Explicitly, use the form and the torus $h(\lambda)$ of Lemma~\ref{lem:nofield}. The Sylow subgroup fixing $\langle e_1\rangle$ consists of the matrices
\[
 u(\alpha,\beta)=\begin{pmatrix}1&\alpha&\beta\\0&1&-\bar\alpha\\0&0&1\end{pmatrix},
 \qquad \beta+\bar\beta+\alpha\bar\alpha=0,
\]
with $Z=\{u(0,\beta)\}$ and $S/Z\cong(\F_{25},+)$ via $\alpha$. One has
\[
 h(\lambda)\,u(\alpha,\beta)\,h(\lambda)^{-1}=u(\lambda^{2-n}\alpha,\lambda^{n+1}\beta).
\]
For $n=5$, the image of $\lambda\mapsto\lambda^{-3}$ is the subgroup of order $8$ of $\F_{25}^*$, and its kernel is the group of scalars. Hence $H\cong C_8$ acts on $S/Z\cong\F_{25}$ by multiplication by a primitive $8$th root of unity. Since $8\nmid5-1$, this action is faithful and $\F_5$-irreducible.

Every lower ramification group is normal in $S\rtimes H$. Let $R$ be one of these groups. The image of $R$ in $S/Z$ is $H$-invariant, hence is either $0$ or all of $S/Z$. If the image is $0$, then $R\le Z$, so $R=Z$ or $1$. If the image is all of $S/Z$, then
\[
 RZ=S.
\]
Since $S$ is extraspecial, its Frattini subgroup is
\[
 \PhiG(S)=Z.
\]
Thus a subgroup whose image generates $S/\PhiG(S)$ must be all of $S$, and therefore $R=S$. Hence the only possible nontrivial ramification groups are
\[
 S\quad\text{and}\quad Z.
\]
The central stage cannot be skipped: if $S_a=S$ and $S_{a+1}=1$, then
\[
 [S_a,S_a]\subseteq S_{2a+1}\subseteq S_{a+1}=1
\]
by \cite[Chapter~IV, Proposition~10]{Serre}, contrary to $[S,S]=Z\ne1$.

Consequently there are exactly two positive jumps $a<b$ such that
\[
 S_i=
 \begin{cases}
 S,&1\le i\le a,\\
 Z,&a<i\le b,\\
 1,&i>b.
 \end{cases}
\]
Equation \eqref{eq:genusS} gives
\[
 212=124(a-1)+4(b-a),
\]
so
\[
 b=84-30a.
\]
The conditions $1\le a<b$ leave only
\[
 (a,b)=(1,54)
 \quad\text{or}\quad
 (2,24).
\]

Suppose first that $(a,b)=(2,24)$. Then
\[
 S_2/S_3\cong S/Z.
\]
Let $\chi:H\to k^*$ be the tangent character at $P$. It is faithful and has order $8$. The action of a tame complement on the $i$th lower ramification quotient is given, up to inversion, by the $i$th power of the tangent character; see \cite[Chapter~IV]{Serre}. Hence the image of $H$ on $S_2/S_3$ has order dividing
\[
 \frac{8}{\gcd(8,2)}=4.
\]
This contradicts the faithful action of $H$ on $S/Z$.

Now suppose $(a,b)=(1,54)$. The subgroup $Z$ has a single lower jump $54$. Hurwitz for $X\to X/Z$ gives
\[
 210=5\bigl(2g(X/Z)-2\bigr)+4\cdot55,
\]
so
\[
 X/Z\cong\PP^1.
\]
Choose a coordinate $t$ for which the branch point is infinity. Artin--Schreier theory gives
\[
 k(X)=k(t,z),
 \qquad
 z^5-z=f(t),
\]
where $f$ is reduced and
\[
 \deg f=54.
\]

The group $S/Z\cong(\F_5)^2$ acts on the $t$-line as a group of translations. Choose a nonzero translation
\[
 t\longmapsto t+c.
\]
Since $Z$ is central in $S$, a lift commutes with the deck transformations and has the form
\[
 (t,z)\longmapsto(t+c,z+w(t))
\]
for some $w(t)\in k(t)$. Therefore
\begin{equation}\label{eq:AS106}
 f(t+c)-f(t)=w(t)^5-w(t).
\end{equation}
The left-hand side has degree exactly $53$, because its leading coefficient is
\[
 54c\,\operatorname{lc}(f)\ne0
\]
in characteristic $5$. The right-hand side has no finite poles, so $w$ is a polynomial. If $w$ is nonconstant, its degree is a multiple of $5$; if $w$ is constant, the right-hand side is constant. Both alternatives contradict \eqref{eq:AS106}. Thus the case \eqref{eq:106} is impossible.
\end{proof}

Hence the unitary case is impossible in the strip.

\subsection{The Ree case}

Suppose
\[
 G\cong\Ree(n),
 \qquad
 n=3n_0^2,\qquad n_0=3^a,
\]
so that $q=n^3$ and the complement in a natural point stabilizer has order $n-1$. Equation \eqref{eq:master2} becomes
\begin{equation}\label{eq:ree}
 2g=
 \frac{(n^3+1)(n-1)}w
 -(n^3-1).
\end{equation}
By \cite[Lemma~2.6(iii)]{GK10},
\[
 w\mid n+1,
 \quad\text{or}\quad
 w\mid n+3n_0+1,
 \quad\text{or}\quad
 w\mid n-3n_0+1,
\]
and $w$ is odd.

For $n=3$, the only positive genus at least two produced by these divisibilities is $g=15$, while the strip begins at
\[
 \frac23q=18.
\]
Thus $n=3$ does not occur.

Assume $n\ge27$. If
\[
 w=\frac{n+1}{r},
\]
then $r\ge4$ and
\[
 2g\ge(n-1)(3n^2-5n+3)>2n^3,
\]
contrary to $g<q$. If
\[
 w=\frac{n+3n_0+1}{r},
\]
then either the genus is negative for $r=1$, or $r\ge5$ and
\[
 2g\ge\frac{n-1}{3}(7n^2+22n+12)>2n^3.
\]
Finally, if
\[
 w=\frac{n-3n_0+1}{r},
\]
then $r>1$ gives an even larger genus than in the preceding branch. For $r=1$,
\[
 g=\frac32n_0(n-1)(n+n_0+1).
\]
Since $n_0+1<n/3$ for $n_0\ge3$,
\[
 g<2n_0n^2<\frac23n^3,
\]
because $n_0<n/3$. This contradicts the lower bound in \eqref{eq:strip}. Thus the Ree case is impossible.

We have proved that, under \eqref{eq:strip}, the faithful action $K=1$ cannot occur.

\section{The cubic family and completion of the proof}

We now determine the full automorphism group of the additional family.

\begin{proposition}\label{prop:cubic-aut}
Let
\[
 C_q:\ y^3=x^q-x,
 \qquad
 q=p^a\equiv2\pmod3.
\]
Then
\[
 g(C_q)=q-1,
 \qquad
 \gamma(C_q)=0,
\]
and
\begin{equation}\label{eq:cubic-aut}
 \Aut_k(C_q)\cong C_3\times\PGL(2,q).
\end{equation}
A Sylow $p$-subgroup has order $q$, and the full automorphism group has no global fixed point.
\end{proposition}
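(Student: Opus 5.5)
We need to establish the genus, the $p$-rank, the automorphism group, and the absence of a global fixed point for $C_q$. The genus and $p$-rank are direct computations. The map $x\colon C_q\to\PP^1$ is a cyclic cubic Kummer cover. It is totally ramified at the $q$ roots of $x^q-x$ and also at $\infty$, since $\deg(x^q-x)=q\equiv2\not\equiv0\pmod3$. Riemann--Hurwitz then gives $2g-2=-6+2(q+1)$, so $g=q-1$.

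For the $p$-rank we use the other model $x^q-x=y^3$, which exhibits $C_q$ as an elementary abelian $\F_q$-Artin--Schreier cover of the $y$-line. This cover is ramified only over $y=\infty$, where every nonzero element of $\F_q y^3$ has degree $3$, prime to $p$. Deuring--Shafarevich for each order-$p$ subgroup, or equivalently for the whole group $S\cong(\F_q,+)$ acting by $x\mapsto x+b$, gives $\gamma-1=q(0-1)+(q-1)$. Hence $\gamma=0$. This also shows that $S$ is a $p$-subgroup of order $q$ fixing the point over $y=\infty$.

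Next we exhibit a subgroup $C_3\times\PGL(2,q)$. The deck group $(x,y)\mapsto(x,\zeta_3y)$ gives the $C_3$. For $\PGL(2,q)$, take $\begin{pmatrix}a&b\\c&d\end{pmatrix}\in\GL(2,q)$ acting by $x\mapsto(ax+b)/(cx+d)$. One has
\[
 (x')^q-x'=\frac{\det\cdot(x^q-x)}{(cx+d)^{q+1}}.
\]
Since $3\mid q+1$, we may set $y'=\delta\,y/(cx+d)^{(q+1)/3}$ with $\delta^3=\det$. This lifts $\GL(2,q)$, and scalars $\lambda I$ act on $y$ by $\lambda^{\,\cdot}$-powers lying in $\mu_3$. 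After the choice of $\delta$ the lift can be normalized to a homomorphism from $\PGL(2,q)$ that commutes with the deck group. The cleanest way to see this is that the extension of $\PGL(2,q)$ by $C_3$ obtained from all lifts is central: the action on $C_3$ factors through $\PGL(2,q)/\PSL(2,q)\cong C_2$, and one checks directly that the lifts commute with $\zeta_3$. Moreover $H^2(\PGL(2,q),C_3)=0$, because $3\nmid|\text{Schur multiplier}|$, so the extension splits. Since $\PGL(2,q)$ moves every point of $\PP^1$, there is no global fixed point.

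For the reverse inclusion, the point is that $\Aut(C_q)$ itself satisfies the hypotheses of the preceding sections, so their analysis applies to it. $C_q$ has $p$-rank zero, a $p$-subgroup of order $q>g$ in a point stabilizer, and no global fixed point, so the analysis of Sections~3--5 applies to $G=\Aut(C_q)$. Let $S^*\supseteq S$ be a Sylow $p$-subgroup. By Lemma~\ref{lem:onepoint}, $|S^*|>g$. Proposition~\ref{prop:orbits} gives $|\Omega|=|S^*|+1$. The kernel $K$ contains the deck $C_3$, because that group fixes the $q+1$ points of $\Omega$, the fixed points of the $p$-elements of $\PGL(2,q)$. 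By \eqref{eq:tbound}, $|K|=3$, and \eqref{eq:t3genus} gives $g=|S^*|-1$. Hence $|S^*|=q$ and $S^*=S$. Then $G/K$ is nonsolvable and acts on $X/K\cong\PP^1$, so Lemma~\ref{lem:dickson} gives $G/K\le\PGL(2,q)$, and $|G|\le3|\PGL(2,q)|$. Equality with the constructed subgroup yields \eqref{eq:cubic-aut}, and the Sylow $p$-subgroup has order $q$.

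The main obstacle is the splitting of the lift as a direct product. To get this I will either compute explicit lifts, or argue that the commutator of the lifted group with $K$ is trivial. I would try the latter first: conjugation gives a homomorphism $\PGL(2,q)\to\Aut(C_3)$, and I would check on an explicit lift of $x\mapsto\epsilon x$, with $\epsilon$ a nonsquare, that $y\mapsto\delta y$ commutes with $y\mapsto\zeta_3y$. That settles centrality; splitting then follows from $\gcd(3,|\PGL(2,q)|)$ considerations on the Sylow $3$-subgroup, using Gaschütz's theorem.
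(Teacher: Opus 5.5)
\textbf{There is a genuine gap in your upper bound.} You assert that the deck group $\langle\tau\rangle$, $\tau\colon(x,y)\mapsto(x,\zeta_3y)$, lies in $K$ because it fixes ``the $q+1$ points of $\Omega$, the fixed points of the $p$-elements of $\PGL(2,q)$.'' At that stage you only know $|\Omega|=|S^*|+1$. Moreover $\Omega$ is defined by the $p$-elements of the full group $G=\Aut(C_q)$, not of the lifted $\PGL(2,q)$. The involution $\tau$ fixes exactly the $q+1$ points over $\PP^1(\F_q)$, and all of these lie in $\Omega$. So $\tau\in K$ is equivalent to $|\Omega|=q+1$, that is, to $S^*=S$. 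That is precisely what you then deduce from $\tau\in K$ via \eqref{eq:tbound} and \eqref{eq:t3genus}, so the argument is circular. Nothing you wrote excludes a larger Sylow subgroup $S^*\supsetneq S$ with a larger orbit $\Omega$ on which $\tau$ acts nontrivially. Equivalently, nothing shows that $G$ normalizes $\langle\tau\rangle$.

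The paper closes this point directly. It proves that the trigonal pencil $x$ is unique: by Castelnuovo--Severi for $q>5$, since two degree-$3$ rational subfields would force $g\le4$, and by the canonical-quadric-cone argument of Lemma~\ref{lem:cubic} for $q=5$. Hence every automorphism normalizes $\langle\tau\rangle$ and descends to the $x$-line preserving $\PP^1(\F_q)$, whose stabilizer in $\PGL(2,k)$ is $\PGL(2,q)$. This avoids KOS, BCPS and Dickson entirely.

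If you want to keep your route, you must first rule out $|S^*|\ge pq$ independently. For example, $|S^*|\ge pq>pg>\frac{p}{p-1}g$, so \cite[Theorem~1.1]{GK10} would make the curve classical. But $g=q-1$ matches none of the classical genera:
\begin{itemize}
\item $(N-1)/2=q-1$ forces $N=2q-1$, which is not a power of $p$;
\item $N(N-1)/2\equiv0\not\equiv q-1\pmod p$;
\item the Ree curve needs $p=3$, which is incompatible with $q\equiv2\pmod3$.
\end{itemize}
Only after this is $|\Omega|=q+1$, and hence $\tau\in K$, justified.

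A minor point on the lower bound. Your $H^2(\PGL(2,q),C_3)=0$ argument is correct, but the fallback via Gasch\"utz and ``$\gcd(3,|\PGL(2,q)|)$'' does not work. Here $3\mid q+1$ divides $|\PGL(2,q)|$, and the Sylow $3$-subgroup of the extension could a priori be cyclic. No cohomology is needed anyway. The cube root $\delta=\det(A)^{r}$ with $r=(2q-1)/3$ makes $A\mapsto$(lift) a homomorphism on $\GL(2,q)$, and scalars act trivially since $2r-(q+1)/3=q-1$; this is the paper's construction.
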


\begin{proof}
The degree-$3$ map to the $x$-line is totally ramified at the $q+1$ points of $\PP^1(\F_q)$, so Hurwitz gives
\[
 g=q-1.
\]
The translations
\[
 (x,y)\longmapsto(x+b,y),
 \qquad b\in\F_q,
\]
form a group of order $q$ with a unique fixed point. Its quotient is rational and is ramified only at that point, so Deuring--Shafarevich gives $\gamma=0$.

For $q>5$, the degree-$3$ pencil is unique by Castelnuovo--Severi, since two distinct degree-$3$ maps would imply $g\le4$. For $q=5$, uniqueness was proved in Lemma~\ref{lem:cubic} (last assertion). Hence every automorphism descends to the $x$-line and preserves the branch set $\PP^1(\F_q)$. The stabilizer of this set in $\PGL(2,k)$ is $\PGL(2,q)$, and the kernel of the induced action is the deck group $C_3$. Therefore
\[
 |\Aut_k(C_q)|\le3|\PGL(2,q)|.
\]

Conversely, every element of $\PGL(2,q)$ lifts. Put
\[
 r=\frac{2q-1}{3},
 \qquad
 d=\frac{q+1}{3}.
\]
For
\[
 A=\begin{pmatrix}a&b\\ c&e\end{pmatrix}\in\operatorname{GL}(2,q),
\]
define
\[
 (x,y)\longmapsto
 \left(
 \frac{ax+b}{cx+e},
 \frac{\det(A)^r}{(cx+e)^d}y
 \right).
\]
Since
\[
 \left(\frac{ax+b}{cx+e}\right)^q-
 \frac{ax+b}{cx+e}
 =
 \frac{\det(A)(x^q-x)}{(cx+e)^{q+1}},
\]
and
\[
 3r=2(q-1)+1,
\]
this transformation preserves the equation. A scalar matrix $\lambda I$ acts trivially because
\[
 2r-d=q-1.
\]
Thus the lifts give a subgroup isomorphic to $\PGL(2,q)$. It commutes with the deck group and meets it trivially, proving \eqref{eq:cubic-aut}.

The $p$-part of $3|\PGL(2,q)|$ is $q$, so the displayed translation group is Sylow. Its common fixed point is moved by the lifted projective transformations. Hence the full group has no global fixed point.
\end{proof}

\begin{proof}[Proof of Theorem~\ref{thm:zerorank}]
Let $S^*$ be a Sylow $p$-subgroup of $G$ containing the subgroup $S$ from the statement. By Lemma~\ref{lem:onepoint}, $S^*$ has a unique fixed point. That point is fixed by $S$, hence it is the unique fixed point $P$ of $S$. Thus $S^*\le G_P$ and $|S^*|\ge |S|>g$. Replacing $S$ by $S^*$, we may assume for the proof that $S$ is Sylow.

If the strict inequality
\[
 |S|>\frac{p}{p-1}g
\]
holds, the three classical families follow from \cite[Theorem~1.1]{GK10}. Otherwise we are in the strip \eqref{eq:strip}. Section~5 shows that the faithful action $K=1$ is impossible there, while Proposition~\ref{prop:kernel} shows that the nonfaithful case gives either the classical hyperelliptic family or the cubic family. Proposition~\ref{prop:cubic-aut} determines the full group of the latter. In that case the Sylow order is $g+1$; since the original subgroup in Theorem~\ref{thm:zerorank} has order $>g$ and is contained in this Sylow subgroup, it must itself have order $g+1$.

Conversely, the first three families have the stated Sylow orders and fixed-point properties by \cite[Introduction and Theorem~1.1]{GK10}. Their Sylow $p$-subgroups have rational quotient and a unique ramified point, so Deuring--Shafarevich gives $p$-rank zero. Proposition~\ref{prop:cubic-aut} gives the same assertions for the fourth family.
\end{proof}

\begin{proof}[Proof of Corollary~\ref{cor:zerogap}]
Only the fourth family of Theorem~\ref{thm:zerorank} is nonclassical, and there
\[
 |S|=q=g+1.
\]
This proves the assertion.
\end{proof}

\section{The equality case}

We now remove the zero-$p$-rank hypothesis and assume throughout this section that
\begin{equation}\label{eq:equality}
 |S|=\frac{p}{p-1}g.
\end{equation}
Write $q=|S|=p^a$. Thus $g=(p-1)q/p$.

\begin{proposition}\label{prop:rank-dichotomy}
Assume \eqref{eq:equality}. Then either $X$ has $p$-rank zero, or $X$ is ordinary. In the ordinary case $X/S$ is rational, the action of $S$ has exactly two short orbits, namely a fixed point $P$ and an orbit $\Delta$ of length $q/p$, each point of $\Delta$ has stabilizer of order $p$, and the second ramification groups of $S$ at $P$ and of $S_R$ at $R\in\Delta$ are trivial.
\end{proposition}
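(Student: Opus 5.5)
We apply the Deuring--Shafarevich formula to the $p$-group $S$ acting on $X$. Let $\gamma=\gamma(X)$ and $\gamma'=\gamma(X/S)$. The formula reads
\[
 \gamma-1=q(\gamma'-1)+\sum_{R}(|S_R|-1),
\]
where the sum runs over representatives of the short $S$-orbits. The point $P$ contributes $q-1$, so
\[
 \gamma-1\ge q(\gamma'-1)+q-1,
\]
that is, $\gamma\ge q\gamma'$. Since $\gamma\le g=(p-1)q/p<q$, this forces $\gamma'=0$ whenever the structure is to be analysed. If $\gamma=0$ we are in the zero-$p$-rank branch and there is nothing to prove. So assume $\gamma\ge1$. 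We then compare Deuring--Shafarevich with Hurwitz, and the plan is to show that $\gamma=g$ with all the stated orbit data.

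For the comparison, write the Hurwitz formula for $X\to X/S$:
\[
 2g-2=q(2g_0-2)+\sum_R\frac{q}{|S_R|}\,d_R,
\]
where $g_0=g(X/S)$ and $d_R\ge 2(|S_R|-1)$, with equality exactly when $S_R^{(2)}=1$. This gives
\[
 g-1\ge q(g_0-1)+\sum_R\frac{q}{|S_R|}(|S_R|-1)\ge \gamma-1+q(g_0-\gamma'),
\]
with the last sum being the Deuring--Shafarevich ramification sum. Subtracting shows $g\ge\gamma$, as expected. The real constraint comes from the size $g=q-q/p$. We therefore aim to bound $\sum_R (q/|S_R|)(|S_R|-1)$ from below. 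The fixed point $P$ contributes $q-1$. Any further short orbit contributes at least $(q/|S_R|)(|S_R|-1)\ge q/p\cdot(p-1)\cdot$ (and at least $q-q/p$ in general). Thus:

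\begin{itemize}
\item If $g_0\ge1$, Hurwitz gives $g-1\ge q-1$, contradicting $g<q$. Hence $g_0=0$, and then $\gamma'=0$.
\item With $g_0=0$, Deuring--Shafarevich gives $\gamma-1=-q+(q-1)+\sum_{R\ne P}(q-q/|S_R|)$. So $\gamma\ge1$ requires at least one further short orbit $\Delta$, and every such orbit adds at least $q-q/p$.
\item Hurwitz then gives $g-1\ge -q+(q-1)+(q-q/p)=g-1$.
\end{itemize}

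Equality must hold throughout. Consequently there is exactly one further short orbit, its stabilizers have order exactly $p$ (so $|\Delta|=q/p$), and all the differents are minimal, i.e.\ $S_P^{(2)}=1$ and $(S_R)^{(2)}=1$. Feeding these equalities back into Deuring--Shafarevich gives $\gamma-1=-1+(q-q/p)$, hence $\gamma=q-q/p=g$, so $X$ is ordinary.

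The main care point is the lower bound for each extra orbit's contribution to the different. Using $d_R\ge2(|S_R|-1)$ from $S_R=S_R^{(0)}=S_R^{(1)}$ (all inertia being wild), the contribution is $(q/|S_R|)\cdot 2(|S_R|-1)\ge 2(q-q/p)$ for $|S_R|\ge p$. The monotonicity of $s\mapsto 1-1/s$ then identifies the equality case as $|S_R|=p$. The remaining bookkeeping of the halves (Hurwitz's $2g-2$ versus Deuring--Shafarevich's $\gamma-1$) is routine.
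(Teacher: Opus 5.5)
Your argument is correct and is essentially the paper's proof. Both combine Deuring--Shafarevich with Hurwitz, using two facts: each extra short orbit contributes at least $q-q/p=g$, and $d_R\ge 2(|S_R|-1)$ with equality iff $S_R^{(2)}=1$. The only difference is the order of extraction: the paper reads off the orbit structure and $\gamma=g$ directly from Deuring--Shafarevich together with $\gamma\le g$, and only then invokes Hurwitz, whereas you obtain them from the equality case in Hurwitz.

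One small slip: in your first display the Deuring--Shafarevich sum over orbit representatives needs the weight $q/|S_R|$, as you correctly use later. This is harmless, because you only apply that display at $P$.
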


\begin{proof}
Let $\gamma$ be the $p$-rank of $X$ and $\bar\gamma$ the $p$-rank of $X/S$. Besides $\{P\}$, let the short $S$-orbits have lengths $\ell_1,\ldots,\ell_r$. Deuring--Shafarevich gives
\[
 \gamma=q\bar\gamma+\sum_{i=1}^r(q-\ell_i).
\]
Since $\gamma\le g<q$, we have $\bar\gamma=0$. Each nontrivial stabilizer has order at least $p$, so $\ell_i\le q/p$ and
\[
 q-\ell_i\ge q-q/p=g.
\]
Thus either $r=0$ and $\gamma=0$, or $r=1$, $\ell_1=q/p$, and $\gamma=g$.

Assume the latter. For $R\in\Delta$, $|S_R|=p$. Hurwitz gives
\[
 2g-2=q(2\bar g-2)+d_P+\frac qp d_R.
\]
Since $d_P\ge2(q-1)$ and $d_R\ge2(p-1)$,
\[
 2g-2\ge2q\bar g+2g-2.
\]
Hence $\bar g=0$ and equality holds in the two lower bounds. Therefore $S_P^{(2)}=1$ and $(S_R)^{(2)}=1$.
\end{proof}

\begin{remark*}
For $q=p$, Proposition~\ref{prop:rank-dichotomy} can also be deduced from \cite[Theorem~1.1]{GK17}; see the Remark in the Introduction.
\end{remark*}

\begin{lemma}\label{lem:S-sylow-boundary}
Assume \eqref{eq:equality} and suppose that $X$ is not one of the classical curves in \cite[Theorem~1.1]{GK10}. Then $S=G_P^{(1)}$ and $S$ is a Sylow $p$-subgroup of $G$.
\end{lemma}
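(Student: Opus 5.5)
Since $S\le G_P$ is a $p$-group, it lies in the unique Sylow $p$-subgroup $G_P^{(1)}$ of $G_P$. Put $S^*=G_P^{(1)}$, and choose a Sylow $p$-subgroup $T$ of $G$ containing $S^*$. The plan is to show that $|T|=|S|$, which gives $S=S^*=T$ at once. Suppose instead that $|T|>|S|=q$, so $|T|\ge pq$. The first task is to see that $T$ still lies in a point stabilizer. In the zero-$p$-rank branch of Proposition~\ref{prop:rank-dichotomy} this is immediate: by Lemma~\ref{lem:onepoint}(1), $T$ has a unique fixed point, and since $S\le T$ that point must be $P$. In the ordinary branch I plan to use a central element $z\in T$ of order $p$. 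Its fixed points form a $T$-invariant set. By Deuring--Shafarevich applied to $\langle z\rangle$, with $\gamma=g$, that set is small, and I then argue that $T$ fixes one of its points. If this turns out to be awkward, the alternative is to apply Deuring--Shafarevich to $T$ directly: one gets $g=\gamma\ge |T|\bar\gamma+\sum(|T|-\ell_i)$, and if $T$ had no fixed point, some orbit would contain $P$ and have length $\ell\le |T|/p$. In either branch the goal is $T\le G_{P'}$ for some point $P'$, so that $|T|\ge pq>\frac{p}{p-1}g$.

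Once that is in place, the strict hypothesis of \cite[Theorem~1.1]{GK10} holds for $T$ at $P'$, because $G$ has no global fixed point. So $X$ is classical, which contradicts the assumption. Hence $|T|=q$, and $S=G_P^{(1)}$ is Sylow in $G$.

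The step I expect to be the main obstacle is the ordinary case, where I must rule out a larger Sylow $T$ that fixes no point. The cleanest inequality should come from Deuring--Shafarevich for $T$. Write $\gamma=g=\frac{p-1}{p}q$. Every short orbit of $T$ contributes $|T|-\ell_i\ge |T|(1-1/p)\ge (p-1)q>g$. Therefore $T$ has no short orbits at all, and $\bar\gamma$ must also vanish. This contradicts the fact that $P$ has nontrivial stabilizer $S\ne1$ in $T$: the $T$-orbit of $P$ is short. So this inequality alone forces $|T|=q$ in the ordinary case, with no need for a fixed point. In the zero-rank case the same computation gives $0\ge |T|-\ell$ for the orbit of $P$, which is absurd unless $T$ fixes $P$, and then \cite{GK10} finishes as above.
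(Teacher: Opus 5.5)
Your zero-$p$-rank branch is correct: Lemma~\ref{lem:onepoint}(1) forces the Sylow subgroup $T\supseteq S$ to fix $P$, and \cite[Theorem~1.1]{GK10} then applies to $T$.

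\textbf{The gap is in the ordinary branch}, which uses a wrong form of Deuring--Shafarevich. Let $T$ be a $p$-group whose short orbits, fixed points included, have lengths $\ell_1,\dots,\ell_r$. Then
\[
 \gamma-1=|T|(\bar\gamma-1)+\sum_{i=1}^r\bigl(|T|-\ell_i\bigr).
\]
The version $\gamma=|T|\bar\gamma+\sum(|T|-\ell_i)$ that you write holds only when $T$ has a fixed point and that point is left out of the sum. Its term $|T|-1$ is what cancels the $-|T|$ on the right and the $-1$ on the left. This is how the paper uses it in Proposition~\ref{prop:rank-dichotomy}, where $P$ is known to be fixed. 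You apply it to a $T$ not known to fix anything, and you count every short orbit. For example, for $y^p-y=x+c/x$ with $T=C_p$, summing over all short orbits, your version gives $\gamma=2(p-1)$ instead of $p-1$. With the correct formula, the first short orbit is absorbed by $-|T|+1$. So the inference ``each short orbit contributes more than $g$, hence there are none'' is invalid, and in particular it does not exclude the short $T$-orbit of $P$. Your closing aside on the zero-rank case uses the same faulty formula, but it is harmless there because Lemma~\ref{lem:onepoint} already does the job.

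\textbf{The conclusion survives after a short case split.} Let $\ell\le|T|/q\le|T|/p$ be the length of the $T$-orbit of $P$.
\begin{itemize}
\item If $\bar\gamma\ge1$, then $\gamma\ge|T|+1-\ell\ge(p-1)q+1>g$.
\item If $\bar\gamma=0$ and this is the only short orbit, then $\gamma=1-\ell\le0$, contradicting $\gamma=g\ge2$.
\item If there is a second short orbit, then $\gamma\ge1+|T|(1-2/p)\ge1+(p-2)q>g$.
\end{itemize}
Alternatively, Nakajima's bound $|T|\le\frac{p}{p-2}(\gamma-1)$ \cite{Na87}, with $\gamma=(p-1)q/p$, gives $|T|<pq$ at once.

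\textbf{Comparison with the paper.} Once repaired, your route differs from the paper's. The paper first gets $S=G_P^{(1)}$ from \cite{GK10}. It then uses the normalizer condition: $N_{S^*}(S)$ permutes the fixed points of $S$. By Proposition~\ref{prop:rank-dichotomy}, these are $P$ alone, or two points in the ordinary case with $q=p$. Since $p$ is odd, $N_{S^*}(S)$ fixes $P$, so it lies in $G_P^{(1)}=S$. That argument is uniform across both branches. Your ordinary branch is purely numerical and needs neither \cite{GK10} nor the no-fixed-point hypothesis. Via Nakajima it even shows that no $p$-subgroup of $G$ has order exceeding $q$.
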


\begin{proof}
If $S<G_P^{(1)}$, then $|G_P^{(1)}|>pg/(p-1)$, so \cite[Theorem~1.1]{GK10} makes $X$ classical. Hence $S=G_P^{(1)}$.

Let $S^*$ be a Sylow $p$-subgroup of $G$ containing $S$ and suppose $S<S^*$. The normalizer condition gives $S<N_{S^*}(S)$. If the $p$-rank is zero, or if the curve is ordinary with $q>p$, the point $P$ is the unique common fixed point of $S$, so $N_{S^*}(S)$ fixes $P$, contradicting $S=G_P^{(1)}$. If $q=p$ in the ordinary case, $S$ has exactly two fixed points. Since $p$ is odd, the $p$-group $N_{S^*}(S)$ fixes both of them, giving the same contradiction. Thus $S$ is Sylow.
\end{proof}

\begin{corollary}\label{cor:zero-boundary}
Assume \eqref{eq:equality}, suppose that $X$ has $p$-rank zero, and assume that $X$ is not classical. Then $q=p$, $p\equiv2\pmod3$, and
\[
 X\cong\{y^3=x^p-x\}.
\]
\end{corollary}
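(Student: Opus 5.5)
We deduce this from Theorem~\ref{thm:zerorank} combined with Lemma~\ref{lem:S-sylow-boundary}. Assume \eqref{eq:equality}, that $X$ has $p$-rank zero, and that $X$ is not classical.

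First we check the hypotheses of Theorem~\ref{thm:zerorank}. The $p$-rank is zero by assumption, and $G$ has no global fixed point by the standing hypothesis. The subgroup $S\le G_P$ satisfies
\[
 |S|=\frac{p}{p-1}g>g .
\]
Theorem~\ref{thm:zerorank} therefore places $X$ in one of its four families. Since $X$ is not classical, the first three are excluded, so $X\cong\{y^3=x^{q'}-x\}$ with $q'=p^b\equiv2\pmod3$. Moreover Corollary~\ref{cor:zerogap} gives $|S|=g+1$.

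Next we pin down the numbers. Combining $|S|=g+1$ with $g=(p-1)q/p$, where $q=|S|$, gives
\[
 q=\frac{p-1}{p}q+1,
\]
that is, $q/p=1$. Hence $q=p$ and $g=p-1$.

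It remains to identify $q'$. By Proposition~\ref{prop:cubic-aut}, the curve $y^3=x^{q'}-x$ has genus $q'-1$. Comparing with $g=p-1$ gives $q'=p$, and the condition $q'\equiv2\pmod3$ becomes $p\equiv2\pmod3$. This yields $X\cong\{y^3=x^p-x\}$.

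Lemma~\ref{lem:S-sylow-boundary} is consistent with this conclusion: $S$ is Sylow of order $p$, which matches the Sylow order $q'=p$ in Proposition~\ref{prop:cubic-aut}. There is no serious obstacle in this argument, since all the work is done by Theorem~\ref{thm:zerorank}. The only point needing care is that Theorem~\ref{thm:zerorank} applies to an arbitrary $p$-subgroup $S\le G_P$ with $|S|>g$, and the numerical identity $|S|=g+1$ then forces $q=p$ directly.
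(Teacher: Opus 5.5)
Your proof is correct and follows essentially the paper's route: apply Theorem~\ref{thm:zerorank} (valid since $|S|=\frac{p}{p-1}g>g$), discard the classical families, and compare the cubic genus $g=|S|-1$ with $g=(p-1)|S|/p$ to get $|S|=p$. The only difference is that the paper first uses Lemma~\ref{lem:S-sylow-boundary} to make $S$ Sylow, whereas you rightly observe that this step is dispensable, because Theorem~\ref{thm:zerorank} and Corollary~\ref{cor:zerogap} already give $|S|=g+1$ for any $p$-subgroup of order $>g$.
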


\begin{proof}
By Lemma~\ref{lem:S-sylow-boundary}, $S$ is Sylow. Since $q>g$, Theorem~\ref{thm:zerorank} applies. The only nonclassical possibility is $y^3=x^q-x$, with $q\equiv2\pmod3$ and genus $q-1$. Comparing $q-1=(p-1)q/p$ gives $q=p$.
\end{proof}

\section{The ordinary boundary}

Assume the equality case is ordinary. Since $S=S_P^{(1)}$ and $S_P^{(2)}=1$, the group $S$ is elementary abelian.

\begin{proposition}\label{prop:ordinary-p}
If $q=p$, then $g=p-1$ and, after a change of coordinates,
\begin{equation}\label{eq:ordinary-two-poles}
 X:\qquad y^p-y=x+\frac{c}{x},\qquad c\in k^*.
\end{equation}
Conversely, every curve \eqref{eq:ordinary-two-poles} is ordinary of genus $p-1$, realizes equality, and its full automorphism group has no global fixed point.
\end{proposition}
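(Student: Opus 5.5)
The forward direction starts from Proposition~\ref{prop:rank-dichotomy} applied with $q=p$. Then $g=(p-1)q/p=p-1$, $X/S\cong\PP^1$, and the orbit $\Delta$ has length $q/p=1$. Hence $S\cong C_p$ has exactly two fixed points $P$ and $R$, both totally ramified, and the cover is unramified elsewhere. Since the second ramification groups are trivial, the lower jump at each of $P$ and $R$ equals $1$, and each different exponent is $2(p-1)$.

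Next I choose a coordinate $x$ on $X/S\cong\PP^1$ sending the images of $P$ and $R$ to $\infty$ and $0$. Artin--Schreier theory gives $k(X)=k(x,y)$ with $y^p-y=f(x)$, where $f$ is reduced, i.e.\ it has no pole part that is a $p$-th power. A reduced $f$ has poles only at branch points, and the pole order at each branch point equals the jump there. So $f$ has simple poles exactly at $\infty$ and $0$:
\[
 f=ax+\frac{b}{x}+e,\qquad a,b\ne0.
\]
The constant $e$ is absorbed by $y\mapsto y+\epsilon$ with $\epsilon^p-\epsilon=e$. Scaling $x\mapsto x/a$ normalizes $a=1$ and gives $c=ab$. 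As a check, Hurwitz gives $2g-2=-2p+2\cdot2(p-1)$, so $g=p-1$.

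For the converse, take $y^p-y=x+c/x$ with $c\ne0$. Hurwitz with the two simple poles gives $g=p-1$. Deuring--Shafarevich applied to the $C_p$-cover of $\PP^1$ with two fixed points gives $\gamma=p(0-1)+2(p-1)=p-2+p\cdot 0\ldots$; more carefully, $\gamma-1=p(-1)+2(p-1)$, so $\gamma=p-1=g$ and the curve is ordinary. Equality then holds: $|S|=p=\frac{p}{p-1}(p-1)$, and $S$ fixes the point over $\infty$.

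It remains to show that $\Aut(X)$ has no global fixed point. I would write down the explicit automorphism
\[
 (x,y)\mapsto\left(\frac{c}{x},\,y\right),
\]
which preserves $x+c/x$ and interchanges the two points over $0$ and $\infty$. I also use the automorphism $y\mapsto -y$, $x\mapsto -x$, which preserves both points. Together with $S$, these show that the only candidates for a global fixed point are the two $S$-fixed points. A global fixed point would have to be fixed by $S$, because $S\le\Aut(X)$. Since the swap moves both of these points, no global fixed point exists.

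The main obstacle is the reduction step: justifying that the reduced Artin--Schreier generator has pole order exactly equal to the lower jump at each branch point, and that it has no other poles. This is standard, and I would cite \cite{Serre} or \cite{HKT}. The rest is routine normalization.
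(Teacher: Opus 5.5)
Your proposal is correct and follows the paper's proof. You use the ordinary branch of Proposition~\ref{prop:rank-dichotomy} to get two weakly ramified fixed points over a rational quotient, reduced Artin--Schreier theory to get $y^p-y=ax+b/x$ normalized by scaling $x$, and, for the converse, Hurwitz, Deuring--Shafarevich and the involution $(x,y)\mapsto(c/x,y)$. Minor cleanups: state explicitly that you are in the ordinary branch (the standing assumption of that section), tidy the garbled Deuring--Shafarevich line, and drop $(x,y)\mapsto(-x,-y)$, which is superfluous since $S$ alone already confines any global fixed point to the two ramified points.
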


\begin{proof}
The second short orbit has length one, so $X\to X/S\cong\PP^1$ is weakly ramified at exactly two points. Sending them to $0$ and $\infty$, reduced Artin--Schreier theory gives $y^p-y=ax+b/x$ with $ab\ne0$. Replacing $x$ by $ax$ gives \eqref{eq:ordinary-two-poles}.

Conversely, the right-hand side has two simple poles, so Hurwitz and Deuring--Shafarevich give $g=\gamma=p-1$. The translations $y\mapsto y+r$, $r\in\F_p$, form $C_p$, while $(x,y)\mapsto(c/x,y)$ interchanges the two ramified points. Hence the full automorphism group has no global fixed point.
\end{proof}

Giulietti and Korchm\'aros \cite[Remarks~2.2--2.3]{GK17} discuss the curves \eqref{eq:ordinary-two-poles} and their automorphism groups, including the exceptional curve in characteristic $3$ with group $\GL(2,3)$. For completeness we give a self-contained proof, and we add the isomorphism criterion. These curves attain Nakajima's bound $|S|\le\frac{p}{p-2}(\gamma-1)$ \cite{Na87} with equality, since $|S|=p$ and $\gamma=p-1$.

\begin{proposition}[Automorphisms and moduli of the Kloosterman boundary family]\label{prop:kloosterman-aut}
Let
\[
 C_c:\qquad y^p-y=x+\frac c x,\qquad c\in k^*,
\]
with $p$ odd.

If $p\ge5$, then
\[
 \Aut_k(C_c)\cong C_2\times D_{2p},
 \qquad
 |\Aut_k(C_c)|=4p,
\]
where $D_{2p}$ denotes the dihedral group of order $2p$. Moreover
\[
 C_c\cong C_{c'}
 \quad\Longleftrightarrow\quad
 \frac{c'}c\in(\F_p^*)^2.
\]

If $p=3$, then
\[
 \Aut_k(C_c)\cong C_2\times D_6
 \quad\text{for }c\ne-1,
\]
whereas
\[
 \Aut_k(C_{-1})\cong\GL(2,3).
\]
Thus $|\Aut_k(C_c)|=12$ for $c\ne-1$ and $|\Aut_k(C_{-1})|=48$. In characteristic $3$,
\[
 C_c\cong C_{c'}
 \quad\Longleftrightarrow\quad c=c'.
\]
\end{proposition}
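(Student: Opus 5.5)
The plan is to show that, outside the single exceptional case $p=3$, $c=-1$, the translation group $S=\langle (x,y)\mapsto(x,y+1)\rangle$ is normal in $G=\Aut_k(C_c)$. Every automorphism then descends to the $x$-line. The automorphism group and the isomorphism criterion both reduce to a computation with reduced Artin--Schreier polynomials.

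\emph{Sylow order.} By Proposition~\ref{prop:ordinary-p}, $g=\gamma=p-1$. Nakajima's bound $|S|\le\frac{p}{p-2}(\gamma-1)=p$ applies to a $p$-subgroup of an ordinary curve with rational quotient. Together with Proposition~\ref{prop:rank-dichotomy} ($S_P^{(2)}=1$, weak ramification), it shows that every Sylow $p$-subgroup of $G$ has order $p$. Each such subgroup has exactly two fixed points.

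\emph{Normality.} This is the main obstacle. Let $n_p$ be the number of Sylow $p$-subgroups and $\Omega$ the set of their fixed points. Two distinct Sylows cannot share a fixed point, since $G_R^{(1)}$ is the unique Sylow subgroup of $G_R$; hence $|\Omega|=2n_p$. Every point stabilizer has the form $C_p\rtimes C_m$ or is cyclic tame. By Lüroth, $X/G$ is rational. I would write Hurwitz for $X\to X/G$:
\[
 2p-4=|G|\Bigl(-2+\sum_{\text{orbits}}\frac{d_Q}{|G_Q|}\Bigr).
\]
For a wild orbit, $d_Q=(|G_Q|-1)+(p-1)$. For a tame orbit, $d_Q=|G_Q|-1$. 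The claim to prove is that $n_p\ne1$ forces $|G|$ so large relative to $2p-4$ that the only solution of this equation is $p=3$ with $|G|=48$.

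Concretely, if $n_p\ge p+1$ then $|G|\ge(p+1)p$, while the bracket is at least $\tfrac1{42}$ in the tame-heavy signatures. I would run through the short list of possible orbit signatures, in the spirit of the proof of Proposition~\ref{prop:orbits}, and check that none is consistent for $p\ge5$. For $p=3$ we have $g=2$, and I would instead appeal to the classification of automorphism groups of genus-$2$ curves in characteristic $3$. There the curve with group of order $48$, namely $\GL(2,3)$, must be identified with $y^3-y=x-1/x$, for instance by locating its $C_3$ with two weakly ramified points.

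\emph{Computation when $S\trianglelefteq G$.} Every automorphism induces a projectivity of the $x$-line preserving the branch pair $\{0,\infty\}$, so it is $x\mapsto\lambda x$ or $x\mapsto\mu/x$. It must also map $\F_p(x+c/x)$ into itself modulo $\wp(k(x))$, with a factor $\varepsilon\in\F_p^*$. Since $x+c/x$ is reduced, this forces equality of reduced forms:
\[
 \lambda x+\frac{c}{\lambda x}=\varepsilon\Bigl(x+\frac cx\Bigr)\ \Rightarrow\ \lambda=\varepsilon,\ \varepsilon^2=1,
\]
and similarly $\mu=\varepsilon c$ with $\varepsilon=\pm1$. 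The lifts are therefore generated by the following maps:
\begin{itemize}
\item $(x,y)\mapsto(x,y+1)$;
\item $(x,y)\mapsto(-x,-y)$, which inverts the translation;
\item $(x,y)\mapsto(c/x,y)$, which is central.
\end{itemize}
This gives $C_2\times D_{2p}$, of order $4p$.

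\emph{Isomorphism criterion.} An isomorphism $C_c\to C_{c'}$ carries the normal Sylow subgroup to the normal Sylow subgroup, so it descends to the $x$-line as above. The same comparison of reduced forms gives
\[
 \lambda x+\frac{c'}{\lambda x}=\varepsilon\Bigl(x+\frac cx\Bigr).
\]
Hence $\lambda=\varepsilon$ and $c'=\varepsilon^2c$. The branch $x\mapsto\mu/x$ leads to the same condition, so $c'/c\in(\F_p^*)^2$. Conversely, these substitutions realize every such isomorphism.

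For $p=3$, $(\F_3^*)^2=\{1\}$, so $c'=c$. The exceptional curve $C_{-1}$ is distinguished from the others by its group order, $48$ against $12$. Hence $C_c\cong C_{c'}$ if and only if $c=c'$ in characteristic $3$.
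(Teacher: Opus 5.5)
\textbf{Verdict: there is a genuine gap.} The step that fails is the normality of $S$, which you yourself call the main obstacle. Several other steps are fine. Nakajima's bound correctly gives Sylow order $p$ and two fixed points per Sylow subgroup. Your reduced-form computations on the $x$-line are correct, both for the group when $S\trianglelefteq G$ and for the isomorphism criterion; the latter is exactly the comparison the paper makes at the end of its proof.

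\textbf{Why the proposed method for normality fails.} Hurwitz for $X\to X/G$ plus orbit-signature bookkeeping cannot rule out $n_p>1$ for $p\ge5$, because there are numerically consistent signatures. For $p=5$ (so $2g-2=6$), take a group of order $120$ with $n_5=6$ and $|N_G(S)|=20$. Let it have one wild orbit of length $12$ with stabilizer $C_5\rtimes C_2$ (different exponent $9+4=13$) and one tame orbit of length $30$ with stabilizer of order $4$. Then
\[
 -2\cdot120+12\cdot13+30\cdot3=6,
\]
exactly as required. This is compatible with $|G|\ge(p+1)p$, even with $|G|\ge4p(p+1)$, and with $|G|\le84(g-1)$.

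For $p=7$, take $n_7=15$, a wild orbit of length $30$, and a tame orbit of length $140$. The signatures with $|G|=420$ (wild stabilizer of order $14$, tame of order $3$) and $|G|=1260$ (orders $42$ and $9$) both satisfy Hurwitz. The latter has bracket $1/126<1/42$, so your ``$\ge1/42$'' estimate also fails for wild signatures. Numerics alone do not decide normality.

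\textbf{The missing idea.} $C_c$ is hyperelliptic: $x$ satisfies $x^2-(y^p-y)x+c=0$ over $k(y)$. So $\iota:(x,y)\mapsto(c/x,y)$ is the hyperelliptic involution, and it is central in $G$.

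The paper uses this by passing to $\bar G=G/\langle\iota\rangle\le\PGL(2,k)$ on the $y$-line, preserving the $2p$ branch points $B_c=\{t:(t^p-t)^2=4c\}$. The affine case gives $D_{2p}$. The classification of $p$-irregular subgroups then leaves only:
\begin{itemize}
\item $\PSL(2,p)$ and $\PGL(2,p)$, excluded for $p\ge7$ by the order of $N_{\bar G}(U)$;
\item $A_5$ for $p=5$, excluded because a branch-point stabilizer would be cyclic of order $6$;
\item $S_4$ for $p=3$, which forces $B_c=\F_9\setminus\F_3$, i.e.\ $c=-1$.
\end{itemize}

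\textbf{A possible repair of your count.} Your approach can be fixed with the same input. Since $\iota$ is central and swaps the two fixed points of every Sylow subgroup, its $2p$ fixed points form a $G$-invariant set disjoint from $\Omega$. When $n_p>1$, Hurwitz forces exactly one tame short orbit, so that orbit must be this set. Hence $|G|=2pe$, and substituting into Hurwitz gives $(p-2)(n_p-1)=p$. This forces $p=3$, $n_3=4$, $|G|=48$.

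\textbf{What remains for $p=3$.} Even after the repair, you must show that $|G|=48$ occurs exactly for $c=-1$ and that the group is $\GL(2,3)$. Deferring this to a genus-$2$ classification plus an identification you do not carry out leaves that part unproved. The characteristic-$3$ isomorphism criterion depends on it, so it is unproved as well.
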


\begin{proof}
Put
\[
 u=y^p-y,\qquad z=2x-u.
\]
Then
\[
 z^2=u^2-4c=(y^p-y)^2-4c.
\]
Thus $C_c$ is hyperelliptic of genus $p-1$, with hyperelliptic involution
\[
 \iota:(x,y)\longmapsto(c/x,y).
\]
The transformations
\[
 \tau_r:(x,y)\longmapsto(x,y+r),\qquad r\in\F_p,
\]
and
\[
 \sigma:(x,y)\longmapsto(-x,-y)
\]
generate a dihedral group $D_{2p}$, and $\iota$ commutes with it. Hence
\[
 C_2\times D_{2p}\le\Aut_k(C_c).
\]

Let
\[
 \bar G=\Aut_k(C_c)/\langle\iota\rangle.
\]
On the hyperelliptic $y$-line it preserves
\[
 B_c=\{t\in k:(t^p-t)^2=4c\},
 \qquad |B_c|=2p.
\]
Choose $\alpha\in k$ with
\[
 (\alpha^p-\alpha)^2=4c.
\]
Then
\[
 B_c=(\alpha+\F_p)\cup(-\alpha+\F_p),
\]
with the two cosets distinct. The reduced group contains
\[
 U=\{t\mapsto t+r:r\in\F_p\}
\]
and the involution $t\mapsto-t$; together they form $D_{2p}$ and act transitively on $B_c$.

A $p$-subgroup of $\bar G$ cannot have order larger than $p$. Indeed, a nontrivial finite $p$-subgroup of $\PGL(2,k)$ has a unique fixed point and is semiregular away from it. If its fixed point lies in $B_c$, the remaining $2p-1$ points cannot be partitioned into free orbits of size at least $p^2$; if it lies outside $B_c$, its order must divide $2p$. Thus $U$ is Sylow in $\bar G$.

Suppose first that $\bar G$ fixes infinity. Then $U$ is the unique Sylow $p$-subgroup. Hence every element of $\bar G$ normalizes $U$ and has the form
\[
 t\longmapsto at+b,
 \qquad a\in\F_p^*,\quad b\in k.
\]
On the quotient additive group $k/\F_p$, preservation of the unordered pair
\[
 \{\bar\alpha,-\bar\alpha\}
\]
forces $\bar b=0$ and $a=\pm1$. Hence
\[
 \bar G=D_{2p}.
\]

Assume now that $\bar G$ moves infinity. By the classification of finite $p$-irregular subgroups of $\PGL(2,k)$, the $p$-semi-elementary case is impossible because its Sylow $p$-subgroup is normal and would fix infinity. For $p\ge7$, the only remaining possibilities are $\PSL(2,p)$ and $\PGL(2,p)$. But
\[
 N_{\bar G}(U)
\]
is contained in the affine stabilizer of $B_c$. The preceding affine calculation shows that this stabilizer is exactly $D_{2p}$. On the other hand, the normalizer of a Sylow $p$-subgroup has order
\[
 \frac{p(p-1)}2
 \quad\text{in }\PSL(2,p),
 \qquad
 p(p-1)
 \quad\text{in }\PGL(2,p).
\]
For $p\ge7$ neither can be contained in a group of order $2p$. Hence no larger reduced group exists.

For $p=5$, the only additional possibility is
\[
 \bar G\cong\PSL(2,5)\cong A_5.
\]
Since $D_{10}$ is transitive on the ten branch points, $\bar G$ would also be transitive, and a branch-point stabilizer would have order $60/10=6$. Such a stabilizer is tame and therefore cyclic, but $A_5$ has no cyclic subgroup of order $6$. Hence again
\[
 \bar G=D_{10}.
\]
This proves, for every $p\ge5$,
\[
 \Aut_k(C_c)=\langle\iota\rangle\times D_{2p}.
\]

Now let $p=3$. The group $\PSL(2,3)\cong A_4$ is excluded: $\bar G$ contains $D_6$, whereas $A_4$ has no subgroup of order $6$. If $\bar G$ moves infinity, the only possible larger reduced group is
\[
 \PGL(2,3)\cong S_4.
\]
(The exceptional $A_5$ would have a branch-point stabilizer of order $10$, impossible because a tame point stabilizer is cyclic and $A_5$ has no cyclic subgroup of order $10$.) We show that this occurs precisely for $c=-1$.

Assume first that $\bar G\cong\PGL(2,3)$. All subgroups isomorphic to $\PGL(2,3)$ are conjugate in $\PGL(2,k)$ \cite{Faber}. After conjugating and then composing with an element of the standard $\PGL(2,3)$, we may arrange that the Sylow subgroup $U$ is the standard translation group. The conjugating projectivity then normalizes $U$, hence is affine. Since it also carries
\[
 N_{\bar G}(U)=D_6
\]
onto the standard normalizer of $U$, it belongs to $D_6$. Thus $\bar G$ itself is the standard $\PGL(2,3)$.

The standard group has a unique orbit of length $6$ on $\PP^1(k)$; it is
\[
 \PP^1(\F_9)\setminus\PP^1(\F_3)=\F_9\setminus\F_3.
\]
Indeed a point in such an orbit has cyclic stabilizer of order $4$, and all cyclic subgroups of order $4$ in $S_4$ are conjugate. Therefore
\[
 B_c=\F_9\setminus\F_3.
\]
For $t\in\F_9\setminus\F_3$, the nonzero trace-zero element $t^3-t$ has square $-1$. Since $4=1$ in characteristic $3$, the equation defining $B_c$ gives
\[
 c=-1.
\]

Conversely, if $c=-1$, then
\[
 B_{-1}=\F_9\setminus\F_3,
\]
so the reduced group is $\PGL(2,3)$. In this case the hyperelliptic model is
\[
 z^2=P(t),
 \qquad
 P(t)=t^6+t^4+t^2+1
      =\frac{t^9-t}{t^3-t}.
\]
For
\[
 A=\begin{pmatrix}a&b\\ c&d\end{pmatrix}\in\GL(2,3),
 \qquad
 t'=\frac{at+b}{ct+d},
\]
one has
\[
 P(t')=\frac{P(t)}{(ct+d)^6}.
\]
Hence
\[
 (t,z)\longmapsto
 \left(t',\frac{z}{(ct+d)^3}\right)
\]
gives a faithful action of $\GL(2,3)$; the scalar matrix $-I$ acts as the hyperelliptic involution. Since the reduced group has order $24$, this already has the maximal possible order $48$. Thus
\[
 \Aut_k(C_{-1})\cong\GL(2,3).
\]
For $c\ne-1$, the reduced group is $D_6$, and the explicit lifts above give
\[
 \Aut_k(C_c)\cong C_2\times D_6.
\]

It remains to determine the parameter. Let first $p\ge5$ and suppose
\[
 C_c\cong C_{c'}.
\]
The hyperelliptic involution is characteristic, and the reduced group $D_{2p}$ has a unique Sylow $p$-subgroup. Hence the isomorphism carries the Artin--Schreier deck group to the corresponding deck group and descends to the $x$-line preserving the unordered branch pair $\{0,\infty\}$. Thus it has the form
\[
 x\longmapsto\lambda x
 \quad\text{or}\quad
 x\longmapsto\frac{\lambda}{x}.
\]
Its conjugation action on the deck group is multiplication by some $a\in\F_p^*$. Comparing the reduced Artin--Schreier equations gives, in either case,
\[
 c'=a^2c.
\]
Conversely, if $c'=a^2c$ with $a\in\F_p^*$, then
\[
 (x,y)\longmapsto(ax,ay)
\]
is an isomorphism $C_c\to C_{c'}$.

For $p=3$, the special curve $C_{-1}$ is distinguished by its automorphism-group order. For the remaining curves the reduced group is $D_6$, whose Sylow $3$-subgroup is unique, so the same argument applies. Since
\[
 (\F_3^*)^2=\{1\},
\]
one obtains
\[
 C_c\cong C_{c'}
 \quad\Longleftrightarrow\quad
 c=c'.
\]
\end{proof}

Proposition~\ref{prop:kloosterman-aut} completes the automorphism-group and parameter classification of the genus-$p-1$ ordinary boundary family.

Assume now $q>p$ and put $n=q/p=p^{a-1}\ge p$. Fix $R\in\Delta$ and set $L=S_R$, so $|L|=p$. Since $S$ is abelian, $L$ fixes every point of $\Delta$ and $P$.

\begin{lemma}\label{lem:Lquotient}
One has $\operatorname{Fix}(L)=\{P\}\cup\Delta$ and $X/L\cong\PP^1$. After choosing a coordinate $x$ on $X/L$, there is an additive subgroup $W\subset k$ of order $n$ such that
\[
 S/L=\{x\mapsto x+w:w\in W\},
\]
and the branch set of $X\to X/L$ is $B=W\cup\{\infty\}$.
\end{lemma}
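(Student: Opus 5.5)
We are in the ordinary equality case with $q>p$: by Proposition~\ref{prop:rank-dichotomy}, $S$ is elementary abelian, $X/S\cong\PP^1$, and $S$ has exactly two short orbits, the fixed point $P$ and an orbit $\Delta$ of length $n=q/p$ whose stabilizers have order $p$. Since $S$ is abelian, all points of $\Delta$ have the same stabilizer $L$, so $L$ fixes $P$ and every point of $\Delta$. The plan is to settle the fixed points of $L$ first, then the genus of $X/L$, then the shape of the $S/L$-action.

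\emph{Fixed points of $L$.} Any point fixed by a nontrivial element of $L$ has nontrivial $S$-stabilizer, so it lies in a short orbit of $S$. Hence $\operatorname{Fix}(L)\subseteq\{P\}\cup\Delta$, and equality holds by the previous paragraph.

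\emph{Genus of $X/L$.} We compute it by Hurwitz for the degree-$p$ cover $X\to X/L$, which is ramified exactly at the $n+1$ points of $\operatorname{Fix}(L)$. At each point of $\Delta$ the second ramification group is trivial, so the different exponent there is $2(p-1)$. At $P$ we have $S_P^{(2)}=1$, hence $L_P^{(2)}=L\cap S_P^{(2)}=1$, and the exponent is again $2(p-1)$. Therefore
\[
 2g-2=p(2g(X/L)-2)+2(p-1)(n+1).
\]
Substituting $g=(p-1)n$ gives $2(p-1)n-2=p(2g(X/L)-2)+2(p-1)n+2(p-1)$, so $p(2g(X/L)-2)=-2p$ and $g(X/L)=0$. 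This is the one step that needs care: the vanishing of the second ramification group at $P$ must be restricted to the subgroup $L$, and this works because lower numbering is compatible with passing to subgroups.

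\emph{Action of $S/L$.} The group $\bar S=S/L$ has order $n$ and acts faithfully on $X/L\cong\PP^1$. It fixes the image $\bar P$ of $P$, because $S$ fixes $P$. Choose the coordinate $x$ with $\bar P=\infty$. A finite $p$-subgroup of $\PGL(2,k)$ fixing $\infty$ consists of affine maps $x\mapsto ax+b$ with $a$ of $p$-power order in $k^*$, so $a=1$. Thus $\bar S$ is a group of translations $x\mapsto x+w$, and $W=\{w\}$ is an additive subgroup of $k$ of order $n$.

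\emph{Branch set.} The branch set is the image of $\{P\}\cup\Delta$. The points of $\Delta$ map to a single $\bar S$-orbit, which has $|\Delta|=n$ points since $L$ fixes each of them. Because $\bar S$ acts by translations, it is regular on $k$, so this orbit is a coset $x_0+W$. Translating $x$ by $-x_0$ keeps $\infty$ fixed and commutes with the translations, and makes the orbit equal to $W$. Hence $B=W\cup\{\infty\}$.
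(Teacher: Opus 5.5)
Your proof is correct and follows the paper's own argument. You identify $\operatorname{Fix}(L)$ through the short $S$-orbits and apply Hurwitz with weak ramification at all $n+1$ fixed points to get $g(X/L)=0$. You then normalize the faithful action of $S/L$, which fixes the image of $P$, to translations. You also spell out two details the paper leaves implicit: that $L\cap S_P^{(2)}=1$, and that a finite $p$-group of affine maps consists of translations.
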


\begin{proof}
Any point fixed by a nontrivial element of $L$ lies in a short $S$-orbit, so Proposition~\ref{prop:rank-dichotomy} gives the fixed set. The group $L$ is weakly ramified at its $n+1$ fixed points. Hurwitz gives
\[
 2g-2=p(2g(X/L)-2)+2(n+1)(p-1),
\]
and $g=n(p-1)$ yields $g(X/L)=0$. The quotient $S/L$ acts faithfully on the line, fixes the image of $P$, and acts regularly on the images of $\Delta$. Conjugating in $\PGL(2,k)$ gives the translation form.
\end{proof}

Let $A_W(x)=\prod_{w\in W}(x-w)$. This is a separable additive polynomial and $A_W'$ is a nonzero constant.

\begin{lemma}\label{lem:ASnormalform}
The curve has an Artin--Schreier equation
\begin{equation}\label{eq:ASnormalform}
 y^p-y=\alpha x+\frac{c}{A_W(x)},\qquad \alpha c\ne0.
\end{equation}
\end{lemma}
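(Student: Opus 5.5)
Since $L$ has order $p$ and $X/L\cong\PP^1$ with coordinate $x$ (Lemma~\ref{lem:Lquotient}), Artin--Schreier theory gives $k(X)=k(x,y)$ with $y^p-y=f(x)$, and $f$ may be taken reduced: every pole of $f$ has order prime to $p$ and lies in the branch set $B=W\cup\{\infty\}$. The cover $X\to X/L$ is weakly ramified at each of its $n+1$ branch points (Proposition~\ref{prop:rank-dichotomy} gives trivial second ramification groups at $P$ and on $\Delta$). Weak ramification at a branch point means the unique lower jump is $1$, so the reduced $f$ has a simple pole there. Hence, by partial fractions,
\[
 f(x)=\alpha x+\sum_{w\in W}\frac{c_w}{x-w}+\text{const},
\]
with $\alpha\ne0$ and every $c_w\ne0$. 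The constant can be absorbed by replacing $y$ with $y+\beta$, where $\beta^p-\beta$ equals that constant. As a check, Hurwitz then gives $g=(p-1)(n+1)-(p-1)=n(p-1)$, consistent with the equality case.

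The main step is to show that all the residues $c_w$ are equal. For this I use the lifts of the translations. Since $S$ is abelian, $L$ is central in $S$. So a lift $s\in S$ of $x\mapsto x+w_0$ commutes with the deck group $L$ and has the form $(x,y)\mapsto(x+w_0,\,\epsilon y+h(x))$; commutation with $y\mapsto y+1$ forces $\epsilon=1$. Then
\[
 f(x+w_0)-f(x)=h^p-h .
\]
The left-hand side is a rational function with at most simple poles, at the points of $W$ and of $W-w_0=W$. If $h$ had a pole, $h^p-h$ would have a pole of order divisible by $p$, which is impossible here. So $h$ is constant and the left side is constant. Comparing the residues at $x=w-w_0$ gives $c_w=c_{w-w_0}$ for all $w,w_0\in W$. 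Thus $c_w=c$ for all $w\in W$, and
\[
 f=\alpha x+c\sum_{w\in W}\frac1{x-w}=\alpha x+c\,\frac{A_W'(x)}{A_W(x)} .
\]
Because $A_W$ is additive and separable, $A_W'$ is a nonzero constant $\kappa$, and $f=\alpha x+c\kappa/A_W(x)$. Renaming $c\kappa$ as $c$ gives \eqref{eq:ASnormalform} with $\alpha c\ne0$.

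I expect the main obstacle to be the residue-invariance step: I must make sure the lift really has $\epsilon=1$, which uses the centrality of $L$ in the abelian group $S$, and that no polar part can appear in $h$. One further point needs care. The reduced form of $f$ is only unique up to adding elements $h^p-h$, so I will argue directly with the reduced representative, where the pole orders and residues are intrinsic. The remaining steps are bookkeeping.
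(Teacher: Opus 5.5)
Your proposal is correct and follows the paper's proof essentially step for step. Weak ramification of $X\to X/L$ gives a reduced equation with simple poles exactly at $W\cup\{\infty\}$; a lift in the abelian group $S$ of a translation by $v\in W$ commutes with the deck group, so $f(x+v)-f(x)$ is an Artin--Schreier coboundary with at most simple poles and hence has none; this makes the residues translation-invariant, and $\sum_{w}1/(x-w)=A_W'/A_W$ finishes the argument. You also make explicit a few points the paper leaves implicit: why $\epsilon=1$, absorbing the constant term, and the genus check.
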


\begin{proof}
Weak ramification over $B$ gives a reduced equation
\[
 y^p-y=\alpha x+\sum_{w\in W}\frac{c_w}{x-w},
\]
with all displayed coefficients nonzero. For $v\in W$, a lift in the abelian group $S$ of $x\mapsto x+v$ commutes with the deck group and has the form $(x,y)\mapsto(x+v,y+h_v(x))$. Hence
\[
 f(x+v)-f(x)=h_v(x)^p-h_v(x).
\]
The left side has only possible simple poles, while every pole of a nonzero Artin--Schreier coboundary has order divisible by $p$. Thus it has no finite poles, and comparison of principal parts gives $c_{w+v}=c_w$. Hence all $c_w$ are equal. Since
\[
 \sum_{w\in W}\frac1{x-w}=\frac{A_W'}{A_W},
\]
the result follows after absorbing the nonzero constant $A_W'$ into $c$.
\end{proof}

\begin{lemma}\label{lem:Lnormal}
If $q>p$, then $L\triangleleft G$.
\end{lemma}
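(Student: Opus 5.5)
Since $X$ is ordinary of genus $g=(p-1)q/p$ and $S$ is Sylow (Lemma~\ref{lem:S-sylow-boundary}), the plan is to show first that $L$ is characterized intrinsically inside $S$, and then to show that every conjugate of $L$ which is contained in some Sylow subgroup coincides with $L$.

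\emph{Step 1: intrinsic characterization of $L$.} By Proposition~\ref{prop:rank-dichotomy}, every nontrivial element of $S$ fixes $P$. The elements with further fixed points are exactly the elements of stabilizers of points of $\Delta$. Since $S$ is abelian, all points of $\Delta$ have the same stabilizer $L$. Hence $L\setminus\{1\}$ is precisely the set of elements of $S$ with more than one fixed point, and $L$ is characteristic in $S$. By Lemma~\ref{lem:Lquotient}, each nontrivial element of $L$ fixes exactly $n+1\ge p+1$ points, while every element of $S\setminus L$ fixes only $P$.

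\emph{Step 2: uniqueness of $S$ at $P$, and the fixed points of $L$.} From $G_P^{(1)}=S$ we get $N_G(S)\le G_P$, because $P$ is the unique fixed point of $S$. Now take any $\sigma\in G$, and suppose $\sigma L\sigma^{-1}\ne L$. The conjugate $\sigma L\sigma^{-1}$ fixes the $n+1$ points $\sigma(\{P\}\cup\Delta)$. Let $T=\sigma S\sigma^{-1}$. Then $T$ is Sylow with its distinguished fixed point $\sigma(P)$ and short orbit $\sigma(\Delta)$. I would compare $L$ with $L'=\sigma L\sigma^{-1}$ by looking at $\langle L,L'\rangle$. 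The aim is to show that this group is a $p$-group, or failing that, to bound it via Hurwitz.

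\emph{Step 3: the Hurwitz count on $X/L$.} The cleaner route is to work on $X/L\cong\PP^1$, whose branch set is $B=W\cup\{\infty\}$ with $|B|=n+1$, and to use the Artin--Schreier model of Lemma~\ref{lem:ASnormalform}. For any $\sigma$, both $L$ and $L'$ are groups of order $p$ acting with $n+1$ weakly ramified fixed points and rational quotient. Consider $H=\langle L,L'\rangle$. If $H$ fixes $P$, then $H\le G_P$ and $H$ is a $p$-group, so $H\le S$. Then $L'\le S$ is a subgroup whose nontrivial elements have $n+1>1$ fixed points, and Step~1 forces $L'=L$. So the key case is when $\sigma(P)\ne P$, or more generally when $H$ is not contained in any point stabilizer. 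For this case I would take the fixed set $F=\{P\}\cup\Delta$ and show that it is a block for $G$: either $\sigma(F)=F$, or $\sigma(F)\cap F=\emptyset$. If $\sigma(F)=F$, then $L'$ fixes $P$, so $L'\le G_P$, and we conclude as above. If $\sigma(F)\cap F=\emptyset$, then $S$ acts on $\sigma(F)$, which has size $n+1=q/p+1$. The orbits of $S$ off $F$ are regular of size $q>n+1$, so $S$ must fix a point of $\sigma(F)$, and that point lies in $F$, a contradiction. This disjointness/orbit-length argument is the crux and gives $\sigma L\sigma^{-1}=L$ in all cases.

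\emph{Main obstacle.} The delicate point is the block property in Step~3, namely that $\sigma(F)$ and $F$ cannot partially overlap. I would handle it the same way: $\sigma(F)$ is $S$-stable only when it is a union of $S$-orbits, so instead I would apply the orbit-length argument to the group $T$ acting on $F$. Since $|F|=n+1<q$ and $T$ has only the short orbits $\{\sigma(P)\}$ and $\sigma(\Delta)$, the set $F$, which is $L$-stable, must be analysed through $L'$-orbits. Each $L'$-orbit on $F$ has size $1$ or $p$, and the points of $F$ fixed by $L'$ lie in $\sigma(F)$. Using $n+1\equiv1\pmod p$, $L'$ fixes a point $R'$ of $F$. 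If $R'=P$, we are done. Otherwise $R'\in\Delta$, and then $L'\le G_{R'}$, whose Sylow subgroup contains $L$. Here I expect to need the fact that $G_{R'}^{(1)}$ has order exactly $p$, obtained from a genus/Hurwitz bound on $X/G_{R'}^{(1)}$, so that $L'=L$.
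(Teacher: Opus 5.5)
There is a genuine gap at the crux: the case $\sigma(F)\cap F=\emptyset$, where $F=\{P\}\cup\Delta=\operatorname{Fix}(L)$ and $L'=\sigma L\sigma^{-1}$.

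\begin{itemize}
\item To exclude this case you assert that $S$ acts on $\sigma(F)=\operatorname{Fix}(L')$. Nothing gives this. $S$ stabilizes $\operatorname{Fix}(L')$ only if it normalizes $L'$, and that is essentially the statement being proved.
\item Your fallback in the last paragraph has the same defect in the other direction. Counting $L'$-orbits on $F$ and using $n+1\equiv 1 \pmod p$ presupposes that $L'$ stabilizes $F$.
\end{itemize}
So neither orbit count is available. Nothing in the proposal rules out a conjugate of $L$ whose $n+1$ fixed points avoid $F$ entirely.

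The overlapping case is fine, though not for the reason you give. If $L$ and $L'$ share a fixed point $R$, both lie in the unique Sylow $p$-subgroup of $G_R$. That subgroup lies in some conjugate $S^g$, since $S$ is Sylow. Step~1 applied to $S^g$ then gives $L=L^g=L'$. So the block property does hold.

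Your expected auxiliary fact, that $G_{R'}^{(1)}$ has order exactly $p$ for $R'\in\Delta$, is false. For the genus-six curve of Proposition~\ref{prop:genus6-full}, $\Aut(X_c)/L\cong\PGL(2,3)$ is transitive on the four fixed points of $L$. Hence every point of $\Delta$ is a conjugate of $P$, and its first ramification group has order $9$.

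The missing ingredient is a global genus bound, and that is all the paper uses. Suppose $L'=L^\sigma\neq L$. Then $k(X)^L$ and $k(X)^{L'}$ are distinct subfields of index $p$, both rational by Lemma~\ref{lem:Lquotient}. Since $[k(X):k(X)^L]=p$ is prime, their compositum is $k(X)$. Castelnuovo--Severi then gives $g\le (p-1)^2$, which contradicts $g=n(p-1)\ge p(p-1)$.

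You already noted in Step~3 that both $L$ and $L'$ have rational quotients, which is exactly the input this argument needs, but you never used it. The paper's route needs neither Step~1 nor any fixed-point bookkeeping.
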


\begin{proof}
If $L^g\ne L$ for some $g\in G$, then $X/L$ and $X/L^g$ are two distinct rational degree-$p$ quotients. Their function fields are distinct subfields of index $p$, hence their compositum is $k(X)$. Castelnuovo--Severi gives $g\le(p-1)^2$. But $g=n(p-1)\ge p(p-1)>(p-1)^2$, a contradiction.
\end{proof}

Put $H=G/L\le\PGL(2,k)$. It preserves $B$, and $U=S/L$ is a Sylow $p$-subgroup of $H$ of order $n$. The group $H$ does not fix $\infty$, since otherwise $G$ would fix the unique point $P$ above it. Hence $H$ is transitive on $B$, and because $U\le H_\infty$ acts regularly on $B\setminus\{\infty\}$, the action is doubly transitive.

\begin{lemma}\label{lem:reduced-boundary}
After an affine change of coordinate fixing $\infty$,
\[
 W=\F_n,\qquad B=\PP^1(\F_n),
\]
and $H\cong\PSL(2,n)$ or $\PGL(2,n)$.
\end{lemma}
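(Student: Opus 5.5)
The plan is to apply Lemma~\ref{lem:dickson} to $H=G/L\le\PGL(2,k)$, acting on the branch set $B$ of size $n+1$. The first step is to check the hypotheses. $H$ is finite, since $G$ is finite. Its order is divisible by $p$, because $U=S/L\ne1$ as $n\ge p$. A Sylow $p$-subgroup of $H$ is $U$, of order $n$, since $S$ is Sylow in $G$ by Lemma~\ref{lem:S-sylow-boundary} and $L\le S$. The set $B$ is a doubly transitive $H$-orbit of size $n+1$, and $U$ fixes $\infty$ and is regular on $W=B\setminus\{\infty\}$. All of this was recorded just before the statement. The one hypothesis still missing is that $H$ is nonsolvable.

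To prove nonsolvability I would argue by contradiction, using the classification of finite $p$-irregular subgroups of $\PGL(2,k)$ \cite{Faber}. A solvable finite subgroup of $\PGL(2,k)$ of order divisible by $p$, apart from small exceptions ($A_4$, $S_4$, which occur only when the Sylow order is $p\le3$), is $p$-semi-elementary. That means its Sylow $p$-subgroup is normal, so it fixes the unique fixed point of that Sylow subgroup. Here the Sylow subgroup $U$ has the unique fixed point $\infty$, so $H$ would fix $\infty$. This contradicts the fact, already noted, that $H$ does not fix $\infty$. The small exceptions still need to be excluded. For $A_4\cong\PSL(2,3)$ and $S_4\cong\PGL(2,3)$ the Sylow $3$-subgroup has order $3<n$ whenever $n\ge p=3$... except when $n=3$. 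I would treat $n=3$ directly: these groups then are $\PSL(2,3)$ or $\PGL(2,3)$ acting on $\PP^1(\F_3)$, which is exactly the desired conclusion. So in that case nonsolvability is not needed, and the conclusion already holds.

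Once nonsolvability is established, Lemma~\ref{lem:dickson} gives a projective change of coordinate after which $H\cong\PSL(2,n)$ or $\PGL(2,n)$ and $B=\PP^1(\F_n)$. The main remaining point, and the step I expect to need the most care, is to make the coordinate change \emph{affine}, fixing $\infty$ and keeping $U$ as translations. I would do this as follows. After the change given by the lemma, $\infty$ may not be sent to $\infty$, but $\PGL(2,n)$ is transitive on $\PP^1(\F_n)$, so composing with an element of the standard group moves the image of $\infty$ back to $\infty$. The new coordinate then fixes $\infty$, so it is affine, $x\mapsto ax+b$. The set $W=B\setminus\{\infty\}$ is therefore carried onto $\F_n$. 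Since $0\in W$, we may also normalize so that $b=0$, which makes the change $x\mapsto ax$. Under it the translation group by $W$ becomes the translation group by $aW=\F_n$. The result is $W=\F_n$, $B=\PP^1(\F_n)$, and $H$ standard.
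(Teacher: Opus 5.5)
Your proposal is correct and follows essentially the paper's route: Faber's classification excludes the $p$-semi-elementary case because $H$ moves $\infty$, which leaves $\PSL(2,n)$ or $\PGL(2,n)$, and the conjugating projectivity is made affine by composing with an element of $\PGL(2,n)$ that returns the Sylow fixed point to $\infty$. The only difference is organizational. The paper applies Faber directly to $H$, so the solvable linear groups at $n=3$ need no separate treatment; your detour through Lemma~\ref{lem:dickson} sets that case aside, and there the identification $B=\PP^1(\F_3)$ still needs the Sylow-fixed-point argument from that lemma's proof.
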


\begin{proof}
By Faber's classification \cite{Faber}, a finite $p$-irregular subgroup of $\PGL(2,k)$ is linear, $p$-semi-elementary, dihedral, or exceptional $A_5$. A $p$-semi-elementary group fixes a unique point, contrary to the fact that $H$ moves $\infty$; dihedral $p$-irregular groups occur only in characteristic $2$. The exceptional $A_5$ in characteristic $3$ has Sylow $3$-subgroups of order $3$, and it has no subgroup of index $4$, so it cannot have the required orbit when $n=3$. Thus $H$ is $\PSL(2,p^e)$ or $\PGL(2,p^e)$. Comparing Sylow $p$-orders gives $p^e=n$. By conjugacy of the linear groups, and then composing with an element of $\PGL(2,n)$ sending the fixed point of the Sylow subgroup back to $\infty$, we may take the conjugating projectivity to fix $\infty$. It is therefore affine, and the orbit becomes $\PP^1(\F_n)$ with translation subgroup $\F_n$.
\end{proof}

\begin{proposition}\label{prop:ordinary-exception}
If $q>p$, then necessarily $p=3$, $q=9$, $g=6$, and, after a change of coordinates,
\begin{equation}\label{eq:genus6}
 X:\qquad y^3-y=c\left(x+\frac1{x^3-x}\right),\qquad c\in k^*.
\end{equation}
Conversely, every curve \eqref{eq:genus6} realizes equality and its full automorphism group has no global fixed point.
\end{proposition}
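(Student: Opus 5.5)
We work with the normal form $y^p-y=\alpha x+c/A_W(x)$ of Lemma~\ref{lem:ASnormalform}, together with Lemma~\ref{lem:reduced-boundary}, which gives $W=\F_n$ and $A_W(x)=x^n-x$. The idea is to use that $G$ normalizes $L$ (Lemma~\ref{lem:Lnormal}). Hence every $h\in H\cong\PSL(2,n)$ or $\PGL(2,n)$ lifts to an automorphism of $X$ normalizing the deck group $L$. Such a lift acts on $y$ by $y\mapsto a y+r(x)$ with $a\in\F_p^*$ and $r\in k(x)$. Writing $f(x)=\alpha x+c/(x^n-x)$, each $h\in H$ must therefore satisfy
\[
 f(h(x))-a_h f(x)\in\wp(k(x)).
\]
Since the poles of $f$ are all simple, this is equivalent to the principal parts of $f\circ h$ and $a_h f$ agreeing at every point of $B$. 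No Artin--Schreier coboundary with only simple poles is allowed, apart from constants.

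First we use the inversion $h(x)=-1/x$, which lies in $\PSL(2,n)$. Under it, $\infty$ goes to $0$, and we compare the residue data.
\begin{itemize}
\item The pole of $f$ at $\infty$ has leading coefficient $\alpha$. Under $h$ it becomes a simple pole at $0$ with residue $-\alpha$.
\item At $0$, the residue of $c/(x^n-x)$ is $-c$ (because $A_W'=-1$). Under $h$ this pole is carried to a simple pole at $\infty$ with leading coefficient $c$.
\item At the other points $w\in\F_n^*$, the residue of $c/(x^n-x)$ is likewise $-c$. Their images $-1/w$ again have derivative factors $w^{-2}$, which change the residues.
\end{itemize}
Matching these with $a_h$ times the principal parts of $f$ gives a set of equations. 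Over all $w\in\F_n^*$, these require that the factor $w^{-2}$ be independent of $w$ modulo $\F_p^*$ scalars. More precisely, the condition is that $w^2\in\F_p^*$ for all $w\in\F_n^*$. This forces $(n-1)\mid 2(p-1)$, so $n=p$ (using $n\ge p$) or $n\le 3$. Combined with $n=p^{a-1}\ge p$, we get $n=p$.

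Next we compare the residue at $\infty$ with those at $w\in\F_p$. The squares $w^2$ must all equal a single value $a_h^{\pm1}$ for $w\in\F_p^*$. This is possible only if $\F_p^*$ consists of $\pm1$, that is, $p=3$. Hence $q=9$ and $g=6$. The same comparison gives $\alpha=c$ up to the scaling of $x$ allowed by the normalizer of $\F_3$. This yields the model $y^3-y=c(x+1/(x^3-x))$.

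The main obstacle is the bookkeeping of the residue transformation under the chosen generator. The delicate part is making sure that the freedom $y\mapsto ay+r$ and the additive coboundaries do not rescue other $n$. To handle this, I would first reduce via $U$-invariance (already done in Lemma~\ref{lem:ASnormalform}) and then apply only the single element $x\mapsto -1/x$.

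For the converse, we check the curve with $p=3$ directly. Hurwitz and Deuring--Shafarevich give $g=\gamma=6$, since there are four weakly ramified points. The group $S$ has order $9$, generated by $y\mapsto y+1$ and a lift $(x,y)\mapsto(x+1,y+h(x))$; here $h$ is a constant solving $h^3-h=c$, because $f(x+1)-f(x)=c$. This $S$ fixes the point over $\infty$, and equality $|S|=9=\tfrac32\cdot6$ holds. Finally, we exhibit an explicit lift of $x\mapsto -1/x$. The function $-1/x$ sends $x+1/(x^3-x)$ to $-1/x - x^3/(1-x^2)\cdot(-1)\cdots$, which simplifies to a transform of $f$ with $a=\pm1$. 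This lift moves the point over $\infty$, so $\Aut(X)$ has no global fixed point.
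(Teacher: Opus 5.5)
Your proposal is correct and is essentially the paper's proof. You lift $x\mapsto -1/x$ as $y\mapsto a_h y+r(x)$ and compare residues at the points of $\F_n^*$ to see that $w^2$ equals one fixed element of $\F_p^*$ for every $w\in\F_n^*$; this forces $n=p=3$ and $a_h=1$, after which the pole at $0$ gives $c=\alpha$. You then verify the converse via $F(x+t)=F(x)+t$ for $F(x)=x+1/(x^3-x)$, together with a lift of the inversion.

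Two small tidy-ups:
\begin{enumerate}
\item The residue condition already pins $w^2$ to a single value, so $n-1\le 2$ follows at once. The detour through $(n-1)\mid 2(p-1)$ is unnecessary.
\item In the converse, your displayed expression for the inversion is garbled; state $F(-1/x)=F(x)$ exactly. Also note that the point over $\infty$ is the unique fixed point of $S$. Only with that fact does the inversion moving this point rule out a global fixed point of $\Aut(X)$.
\end{enumerate}
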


\begin{proof}
By Lemmas~\ref{lem:ASnormalform} and~\ref{lem:reduced-boundary},
\[
 y^p-y=f(x),\qquad f(x)=\alpha x+\frac{c}{x^n-x}.
\]
The element $\iota:x\mapsto-1/x$ belongs to $\PSL(2,n)\le H$. Choose a lift to $G$. Its conjugation action on $L\cong C_p$ is multiplication by $\lambda\in\F_p^*$, so the lift has the form $y\mapsto\lambda y+h(x)$ and
\[
 f(-1/x)-\lambda f(x)=h(x)^p-h(x).
\]
Using
\[
 (-1/x)^n-(-1/x)=\frac{x^n-x}{x^{n+1}},\qquad
 \frac{x^{n+1}}{x^n-x}=x+\frac{x^2}{x^n-x},
\]
the left side becomes
\[
 -\frac{\alpha}{x}+(c-\lambda\alpha)x+c\frac{x^2-\lambda}{x^n-x}.
\]
An Artin--Schreier coboundary has no simple poles. At each $w\in\F_n^*$ the polynomial $x^n-x$ has a simple zero with derivative $-1$, so the residue there is $-c(w^2-\lambda)$, and we therefore obtain $w^2=\lambda$. Hence $n-1\le2$. Since $n\ge p\ge3$, one has $n=p=3$. Then $\lambda=1$, and cancellation at $x=0$ gives $c=\alpha$, proving \eqref{eq:genus6}.

Conversely let $F(x)=c(x+1/(x^3-x))$. In characteristic $3$, $F(x+t)=F(x)+ct$ for $t\in\F_3$ and $F(-1/x)=F(x)$. Choose $u$ with $u^3-u=c$. The maps
\[
 (x,y)\mapsto(x+t,y+tu+r),\qquad t,r\in\F_3,
\]
form a group $S\cong C_3^2$ of order $9$. The right side has four simple poles, so $g=\gamma=6$ by Hurwitz and Deuring--Shafarevich. The deck subgroup $\{(x,y)\mapsto(x,y+r):r\in\F_3\}$ fixes exactly the four ramification points, while the translations $x\mapsto x+t$ act transitively on the three finite ones and fix the point above $\infty$. Hence the latter point is the unique common fixed point of $S$. The involution $(x,y)\mapsto(-1/x,y)$ moves it, so the full automorphism group has no global fixed point.
\end{proof}

\begin{proposition}[Full group and parameter for the genus-six boundary family]\label{prop:genus6-full}
Assume $p=3$ and put
\[
 X_c:\qquad
 y^3-y=c\left(x+\frac1{x^3-x}\right),
 \qquad c\in k^*.
\]
Let
\[
 L=\{(x,y)\mapsto(x,y+r):r\in\F_3\}.
\]
Then $L\triangleleft\Aut(X_c)$ and
\[
 1\longrightarrow L
 \longrightarrow \Aut(X_c)
 \longrightarrow \PGL(2,3)
 \longrightarrow1.
\]
In particular,
\[
 |\Aut(X_c)|=72.
\]
The induced conjugation action of $\PGL(2,3)\cong S_4$ on $L\cong C_3$ is the nontrivial character $S_4\to C_2$. The extension splits, so $\Aut(X_c)\cong C_3\rtimes S_4$ with $S_4$ acting through the sign character. Moreover
\[
 X_c\cong X_{c'}
 \quad\Longleftrightarrow\quad
 c'=\pm c.
\]
\end{proposition}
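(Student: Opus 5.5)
The plan is to run the structure theory of the boundary case in reverse on $X_c$, then compute the group explicitly.

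\emph{Step 1: $L$ is normal and the quotient is $\PGL(2,3)$.} Take $S$ to be the group of order $9$ from the converse part of Proposition~\ref{prop:ordinary-exception}. Since $|S|=9=\frac{3}{2}g$, we are in the equality case. $X_c$ is not classical, because the classical curves of \cite[Theorem~1.1]{GK10} satisfy the strict inequality with their Sylow subgroups. So Lemma~\ref{lem:S-sylow-boundary} makes $S$ a Sylow $3$-subgroup of $G=\Aut(X_c)$. Lemma~\ref{lem:Lnormal} (with $q=9>p$) gives $L\triangleleft G$, and here $L$ is exactly the displayed deck group, namely the stabilizer of a point of $\Delta$. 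Lemma~\ref{lem:reduced-boundary} with $n=3$ then shows that $H=G/L$ is $\PSL(2,3)$ or $\PGL(2,3)$, acting on $B=\PP^1(\F_3)$.

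\emph{Step 2: surjectivity onto $\PGL(2,3)$.} It suffices to lift $x\mapsto -x$, since $\PSL(2,3)$ together with this element generates $\PGL(2,3)$. Write $F(x)=c(x+1/(x^3-x))$. Then $F(-x)=-F(x)$, so $(x,y)\mapsto(-x,-y)$ is an automorphism. This also gives $|\Aut(X_c)|=3\cdot 24=72$.

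\emph{Step 3: the action on $L$ and the splitting.} The lifts already found give the character. Translations $(x,y)\mapsto(x+t,y+tu+r)$ and the involution $(x,y)\mapsto(-1/x,y)$ centralize $L$. The element $(x,y)\mapsto(-x,-y)$ inverts $L$. The translations and $x\mapsto-1/x$ generate $\PSL(2,3)=A_4$, and $x\mapsto -x$ is an odd element of $S_4$. Hence the character is the sign.

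For the splitting, the cleanest route is probably cohomological. Restricting to a Sylow $3$-subgroup $C_3$ of $S_4$, that Sylow is centralized: the translations and $L$ generate the abelian group $S\cong C_3^2$, which splits over $L$. By Gaschütz, a $3$-group extension that splits on restriction to the Sylow splits globally, since $H^2(S_4,L)$ injects into $H^2(C_3,L)$. Alternatively, one can exhibit a complement directly: the stabilizer of the point above $\infty$ is $S\rtimes C_2$, and one builds an $S_4$ from lifts of order prime to $3$ together with a chosen Sylow lift. I would use Gaschütz.

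\emph{Step 4: the isomorphism criterion.} First, $(x,y)\mapsto(-x,-y)$ maps $X_c$ to $X_{-c}$, since $F_c(-x)=-F_c(x)$ means $-y$ satisfies the equation for $-c$. Conversely, suppose $\phi:X_c\to X_{c'}$ is an isomorphism. It conjugates $L$ to $L'$, because each is the unique normal subgroup of order $3$: any normal $3$-subgroup of order $3$ lies in every Sylow, and the image of such a subgroup in $S_4$ would be a normal $3$-subgroup, which is trivial. So $\phi$ descends to a projectivity preserving $\PP^1(\F_3)$, i.e.\ an element of $\PGL(2,3)$. After composing with automorphisms, $\phi$ fixes $x$, and so $\phi$ has the form $y\mapsto\lambda y+h(x)$ with $\lambda=\pm1$. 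Then $F_{c'}-\lambda F_c$ is an Artin--Schreier coboundary. Comparing the simple poles at $0$ (and the linear term) gives $c'=\lambda c=\pm c$.

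The main obstacle is the splitting in Step 3, that is, making the restriction-to-Sylow argument rigorous. A secondary subtlety is the reduction in Step 4 that lets $\phi$ be assumed to fix $x$.
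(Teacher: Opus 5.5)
Your argument follows the paper's route: normality of $L$ via Lemma~\ref{lem:Lnormal}, $\Aut(X_c)/L\le\PGL(2,3)$ because the branch set is $\PP^1(\F_3)$, explicit lifts of generators, the sign character, Gasch\"utz for the splitting, and $L$ as the largest normal $3$-subgroup followed by an Artin--Schreier comparison for the parameter. Your cohomological version of Gasch\"utz is correct. So is your reduction in Step~4 to an isomorphism inducing the identity on the $x$-line, by composing with a lift from the surjection $\Aut(X_{c'})\to\PGL(2,3)$. Two points need repair.

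\emph{The isomorphism $X_c\to X_{-c}$ in Step~4 is wrong.} The map $(x,y)\mapsto(-x,-y)$ does not send $X_c$ to $X_{-c}$. Since $F(-x)=-F(x)$, it preserves $y^3-y=cF(x)$, so it is an automorphism of $X_c$. That is exactly how you used it in Step~2, and the two uses are incompatible for $c\ne0$. The isomorphism you want is $(x,y)\mapsto(x,-y)$, as in the paper, or equivalently $(x,y)\mapsto(-x,y)$.

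\emph{Your reason that $X_c$ is not classical does not work.} You need this to invoke Lemma~\ref{lem:S-sylow-boundary}. Saying that the classical curves satisfy the strict Giulietti--Korchm\'aros inequality does not by itself exclude $X_c$, because you do not yet know that $X_c$ has no larger Sylow subgroup. Use instead that the classical curves have $p$-rank zero while $X_c$ is ordinary. Alternatively, observe that genus $6$ does not occur among the classical curves in characteristic $3$.

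Better still, drop Lemmas~\ref{lem:S-sylow-boundary} and~\ref{lem:reduced-boundary} altogether, as the paper does:
\begin{itemize}
\item Lemma~\ref{lem:Lnormal} uses only Castelnuovo--Severi, not Sylowness of $S$.
\item The quotient $\Aut(X_c)/L$ automatically lies in the stabilizer of the four branch points, which is $\PGL(2,3)$.
\item The lifts you list in Step~3 already generate all of $\PGL(2,3)$: the translations and $x\mapsto-1/x$ give $\PSL(2,3)\cong A_4$, and $x\mapsto-x$ gives an odd element.
\end{itemize}
This route avoids Faber's classification and the appeal to \cite{GK10}. With these fixes the proof is complete.
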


\begin{proof}
The subgroup $L$ is normal by Lemma~\ref{lem:Lnormal}, since the present curve is the $q=9$ case of the ordinary boundary and
\[
 g=6>(3-1)^2.
\]
The quotient $X_c/L$ is the $x$-line, and the branch set is
\[
 \PP^1(\F_3)=\{\infty,0,1,-1\}.
\]
Hence
\[
 \Aut(X_c)/L
 \le
 \operatorname{Stab}_{\PGL(2,k)}\bigl(\PP^1(\F_3)\bigr)
 =
 \PGL(2,3).
\]

Put
\[
 F(x)=x+\frac1{x^3-x}.
\]
Choose $u\in k$ with $u^3-u=c$. For $t\in\F_3$ the transformations
\[
 (x,y)\longmapsto(x+t,y+tu)
\]
are automorphisms because
\[
 F(x+t)=F(x)+t.
\]
The transformations
\[
 (x,y)\longmapsto(-1/x,y),
 \qquad
 (x,y)\longmapsto(-x,-y)
\]
are automorphisms because
\[
 F(-1/x)=F(x),
 \qquad
 F(-x)=-F(x).
\]
Their images generate $\PGL(2,3)$ on the four branch points. Thus the reduced group is all of $\PGL(2,3)$, and the kernel of the action on the $x$-line is exactly the deck group $L$. Hence $|\Aut(X_c)|=3\cdot24=72$.

The last displayed involution conjugates $y\mapsto y+1$ to $y\mapsto y-1$, so the conjugation action on $L$ is nontrivial. Since $\PGL(2,3)\cong S_4$ has a unique nontrivial linear character, this action is the sign character.

For the splitting, note that $S=\langle\alpha\rangle\times L$ is a Sylow $3$-subgroup of $\Aut(X_c)$, where $\alpha:(x,y)\mapsto(x+1,y+u)$ has order $3$; indeed $|\Aut(X_c)|=8\cdot9$. Thus $L$ has the complement $\langle\alpha\rangle$ in $S$. Since $L$ is an abelian normal $3$-subgroup, Gaschütz's theorem \cite[Hauptsatz~I.17.4]{Hup} shows that $L$ has a complement in $\Aut(X_c)$.

Finally, $L$ is the largest normal $3$-subgroup of $\Aut(X_c)$, because the quotient $S_4$ has no nontrivial normal $3$-subgroup. Hence every isomorphism $X_c\to X_{c'}$ carries $L$ to the corresponding deck group and descends to an element of $\PGL(2,3)$ preserving the branch set. In the Artin--Schreier cohomology of the $x$-line the class of $F$ is fixed by $x\mapsto x+t$ and by $x\mapsto-1/x$, and is negated by $x\mapsto-x$. These transformations generate $\PGL(2,3)$, so the reduced group acts on the line $k[F]$ by the sign character. An isomorphism of the cyclic Artin--Schreier covers may also act on the deck group by multiplication by $\pm1$. Consequently it can carry $c[F]$ to $c'[F]$ exactly when $c'=\pm c$. Conversely $X_c\cong X_{-c}$ via $(x,y)\mapsto(x,-y)$.
\end{proof}

\begin{proof}[Proof of Theorem~\ref{thm:boundary}]
If \eqref{eq:closedbound} is strict, the result is \cite[Theorem~1.1]{GK10}. Assume equality and suppose $X$ is not classical. Proposition~\ref{prop:rank-dichotomy} gives zero $p$-rank or ordinarity. Corollary~\ref{cor:zero-boundary} treats the first case. Proposition~\ref{prop:ordinary-p} treats the ordinary case $|S|=p$, and Proposition~\ref{prop:ordinary-exception} treats $|S|>p$. The converse statements follow from those propositions and Proposition~\ref{prop:cubic-aut}.
\end{proof}

\section{Gaps below the boundary}

Throughout this section put
\[
 q=|S|,\qquad m=\frac{p-1}{p}q,\qquad \gamma=\gamma(X),
\]
and, for a point $R$ in a short $S$-orbit,
\[
 E_P=\sum_{i\ge2}(|S_P^{(i)}|-1),\qquad
 E_R=\sum_{i\ge2}(|(S_R)^{(i)}|-1).
\]

\subsection{A local statement}
The following proposition does not use the hypothesis that $\Aut(X)$ fixes no point. It merges the positive-$p$-rank parts of the proofs of Theorems~\ref{thm:gap} and~\ref{thm:refinedgap}.

\begin{proposition}\label{prop:local}
Let $X$ have genus $g\ge2$, and let $S\le\Aut(X)_P$ be a $p$-subgroup of order $q>p$. Write $g=m+\delta$ with $\delta\ge1$, and assume that either
\begin{enumerate}
\item[(a)] $\delta\le p-2$, or
\item[(b)] $q\ge p^3$ and $\delta<\frac{p(p-1)}2$.
\end{enumerate}
Then $X$ has $p$-rank zero.
\end{proposition}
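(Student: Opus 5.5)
The plan is to assume that $X$ has positive $p$-rank and to reach a contradiction by combining Deuring--Shafarevich and Hurwitz for $X\to X/S$, along the lines of Proposition~\ref{prop:rank-dichotomy}, but keeping track of the slack $\delta$. First, $\gamma\le g=m+\delta<q$: we have $q\ge p^2$, so $q/p\ge p>\delta$ in case (a), and $q/p\ge p^2>\delta$ in case (b). Hence in
\[
 \gamma=q\bar\gamma+\sum_{i=1}^r(q-\ell_i)
\]
we get $\bar\gamma=0$. Each short orbit other than $\{P\}$ has $\ell_i\le q/p$ and so contributes at least $m$. Since $m\ge p(p-1)>\delta$, at most one such orbit $\Delta$ occurs, of length $\ell$ say. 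Positive $p$-rank therefore means exactly one such orbit, and $\gamma=q-\ell$.

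Next write Hurwitz with $d_P=2(q-1)+E_P$ and $d_R=2(q/\ell-1)+E_R$:
\[
 2g=2q\bar g+2(q-\ell)+E_P+\ell E_R ,
\]
so that
\[
 2\delta=2q\bar g+2(q-\ell-m)+E_P+\ell E_R .
\]
This forces $\bar g=0$. If $\ell\le q/p^2$, then $q-\ell-m\ge q(p-1)/p^2$, which is at least $p-1$ in case (a) and at least $p(p-1)$ in case (b); both contradict the hypothesis. So $\ell=q/p$, $|S_R|=p$ and $\gamma=m$. If $E_R>0$ then $E_R\ge p-1$, and $(q/p)(p-1)\ge p(p-1)>2\delta$ in both cases, which is again impossible. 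Hence $E_R=0$ and we are left with the single identity
\[
 E_P=2\delta,
\]
and in particular $E_P>0$, i.e.\ $S_P^{(2)}\ne1$.

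The remaining, and main, step is a local lower bound on $E_P$ that exceeds $2\delta$. I would choose an order-$p$ subgroup $C$ of $S_P^{(j)}\cap Z(S)$, where $j\ge2$ is the last index with $S_P^{(j)}\ne1$; this is possible since every $S_P^{(i)}$ is normal in $S$. Then $C$ has lower jump $j$ at $P$, with $p\nmid j$. I would apply Deuring--Shafarevich and Hurwitz to $X\to X/C$. Since $C$ is central, its fixed set is $\{P\}$ together with either all of $\Delta$ (if $C=S_R$) or none of it. In the case $C=S_R$, the points of $\Delta$ would have trivial second ramification while $P$ has jump $j\ge2$. In the case where $C$ is free off $P$, we get $m=p\gamma(X/C)$, and Hurwitz gives
\[
 2\delta\ge (p-1)(j-1)+2p\bigl(g(X/C)-\gamma(X/C)\bigr).
\]
This has to be combined with the contribution of the higher groups $S_P^{(i)}$, $2\le i\le j$, to $E_P$.

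For case (a) I expect the bound $E_P\ge 2(p-1)$ to come from the Hasse--Arf/integrality constraints on the upper jumps of the abelian quotient $S/\Phi(S)$. The action of $S$ on the line $X/L$ by translations, as in Lemma~\ref{lem:Lquotient}, should rule out a single jump $j=2$ with $|S_P^{(2)}|=p$. For case (b) I would push this further, using $q\ge p^3$ to produce at least $p/2$ levels in the filtration of order $\ge p$, which gives $E_P\ge p(p-1)$. This local counting of jumps is the step I expect to be the real obstacle. If the central-subgroup argument does not directly yield it, I would fall back on the Artin--Schreier normal form over $X/C$ and argue on pole orders, as in Lemma~\ref{lem:106}.
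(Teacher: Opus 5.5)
Your reduction is correct and matches the paper: $\bar\gamma=0$, at most one extra short orbit, then $\bar g=0$, $|\Delta|=q/p$, $\gamma=m$, $E_R=0$ and $E_P=2\delta$. Getting $|\Delta|=q/p$ from the Hurwitz identity rather than from Deuring--Shafarevich is a harmless reorganization. The gap is the decisive last step, which you leave as an expectation, and the concrete pieces you sketch would not close it. Since $0<E_P=2\delta<p^2-1$, the group $S_P^{(2)}$ must have order $p$. So the filtration is $S\supset Z=\dots=Z\supset1$ with $t$ copies of $Z$, and $2\delta=t(p-1)$, where $t=1$ in case (a) and $1\le t\le p-1$ in case (b). Your $C$ is therefore just $Z$, and $E_P=(j-1)(p-1)$ exactly. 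In the subcase where $C$ is free off $P$, your relation for $X\to X/C$ is an equality and yields only $g(X/C)=\gamma(X/C)$, which is no contradiction. The bound $E_P\ge p(p-1)$ you hope for in case (b) is, after your reductions, the entire content of the proposition; ``at least $p/2$ levels'' would in any case give only $\frac p2(p-1)$. Hasse--Arf on the quotient $S/\Phi(S)$ also fails in general. Upper numbering passes to quotients, and when $\Phi(S)=Z$ (e.g.\ $S$ cyclic of order $p^2$) the jump carried by $Z$ disappears.

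The missing idea in the paper is to split according to whether $Z$ equals $L:=S_R$. If $Z\ne L$, take the quotient by $L$, not by $Z$. Since $L\cap S_P^{(2)}=1$ and $E_R=0$, the cover $X\to X/L$ is weakly ramified at every fixed point of $L$. So the ramification terms in Hurwitz and Deuring--Shafarevich coincide, and subtracting gives $g-\gamma=p(g_L-\gamma_L)$. Hence $p\mid\delta=t(p-1)/2$, so $p\mid t$, which is impossible for $1\le t<p$. If $Z=L$ (your first subcase, which you never finish), Hurwitz gives $X/Z\cong\PP^1$ and $S/Z$ acts by translations by an additive group $W$. A reduced equation is $y^p-y=f(x)$, with polynomial part of degree $t+1<p$ and simple poles at $W$. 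Lifted translations centralize $Z$, which forces equal residues. Then $f(x+v)-f(x)$ is a polynomial of exact degree $t\in[1,p-1]$ that would have to equal some $h^p-h$, which is impossible. Alternatively, your Hasse--Arf instinct can be made to work by passing to subgroups, which inherit lower numbering, rather than quotients. For $s\in S\setminus Z$, the group $A=\langle s\rangle Z$ is abelian (as $Z$ is central) of order at least $p^2$, with $A_1=A$, $A_2=\dots=A_{t+1}=Z$ and $A_{t+2}=1$. Then $\varphi_A(t+1)=1+tp/|A|$ lies strictly between $1$ and $2$, contradicting Hasse--Arf and settling (a) and (b) at once.
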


\begin{proof}
Since $q>p$, one has $q\ge p^2$, and $g<q$: in case (a), $q-g=q/p-\delta\ge2$; in case (b), $q-g\ge p^2-\frac{p(p-1)}2>0$. Let $\bar\gamma$ be the $p$-rank of $X/S$. Besides the fixed point $P$, let the short $S$-orbits have lengths $\ell_1,\ldots,\ell_r$. Deuring--Shafarevich gives
\[
 \gamma=q\bar\gamma+\sum_{i=1}^r(q-\ell_i).
\]
Since $\gamma\le g<q$, one has $\bar\gamma=0$. Every nontrivial stabilizer has order at least $p$, so every summand is at least $q-q/p=m$. As $m\ge p(p-1)>\delta$, there is at most one short orbit besides $\{P\}$.

Assume $\gamma>0$, and denote the additional orbit by $\Delta$. If its stabilizer has order $p^s$ with $s\ge2$, then
\[
 \gamma-m\ge\frac qp-\frac q{p^2}=\frac{q(p-1)}{p^2},
\]
which is at least $p-1>\delta$ in case (a), and at least $p(p-1)>\delta$ in case (b). This contradicts $\gamma\le g=m+\delta$. Hence $s=1$, so
\begin{equation}\label{eq:gap-rank}
 |\Delta|=q/p,\qquad \gamma=m.
\end{equation}

Let $\bar g$ be the genus of $X/S$, and let $R\in\Delta$. Hurwitz gives
\[
 2g-2=q(2\bar g-2)+2(q-1)+E_P
       +\frac qp\bigl(2(p-1)+E_R\bigr).
\]
Using $g=m+\delta$ yields
\begin{equation}\label{eq:gap-excess}
 2\delta=2q\bar g+E_P+\frac qp E_R.
\end{equation}
Since $q>2\delta$, necessarily $\bar g=0$. If $E_R>0$, then $E_R\ge p-1$, and $\frac qp E_R\ge p(p-1)>2\delta$ in both cases, which is impossible. Thus
\begin{equation}\label{eq:EP}
 E_R=0,\qquad E_P=2\delta>0.
\end{equation}

Since $2\delta<p^2-1$ in both cases, the group $Z:=S_P^{(2)}$ has order $p$, and every further nontrivial lower ramification group equals $Z$. Write
\[
 S_P^{(2)}=\cdots=S_P^{(t+1)}=Z,\qquad S_P^{(t+2)}=1,\qquad 2\delta=t(p-1).
\]
In case (a), $t(p-1)\le2p-4$ gives $t=1$. In case (b), $\delta<\frac{p(p-1)}2$ gives $1\le t<p$. The subgroup $Z$ is central in $S$, since it is normal of order $p$ in a $p$-group. Let $L=S_R$, so $|L|=p$.

\emph{Case $Z\ne L$.} At $P$ one has $L\cap S_P^{(2)}=L\cap Z=1$, and at the fixed points of $L$ in $\Delta$ the ramification is weak because $E_R=0$. Hence $X\to X/L$ is weakly ramified at all its ramification points. Let $g_L$ and $\gamma_L$ denote the genus and $p$-rank of $X/L$, and let $r$ be the number of fixed points of $L$. Hurwitz and Deuring--Shafarevich give
\[
 g-1=p(g_L-1)+r(p-1),
 \qquad
 \gamma-1=p(\gamma_L-1)+r(p-1),
\]
and subtracting,
\[
 g-\gamma=p(g_L-\gamma_L).
\]
Thus $p\mid\delta=\frac{t(p-1)}{2}$, so $p\mid t$. This contradicts $1\le t<p$.

\emph{Case $Z=L$.} Since $Z$ is central, it fixes every point of $\Delta$, and its fixed points are exactly $P$ and the points of $\Delta$. Its lower jump at $P$ is $t+1$, and at the points of $\Delta$ it is $1$. Hurwitz for $X\to X/Z$ gives
\[
 2g-2=p\bigl(2g(X/Z)-2\bigr)+(p-1)(t+2)+\frac qp\,2(p-1),
\]
and with $2g=2m+t(p-1)$ this yields $g(X/Z)=0$.

Choose a coordinate $x$ on $X/Z$ so that the image of $P$ is infinity. The group $S/Z$ acts by translations by an additive subgroup $W\subset k$ of order $q/p$, and the branch set is $W\cup\{\infty\}$. Since jumps of cyclic Artin--Schreier extensions of degree $p$ are prime to $p$, we have $t+1\ne p$. Hence $t+1<p$: this is automatic in case (a), where $t=1$, and in case (b) it follows from $t\le p-1$. A reduced equation therefore has the form
\[
 y^p-y=f(x)=\sum_{i=1}^{t+1}a_ix^i+\sum_{w\in W}\frac{c_w}{x-w},\qquad a_{t+1}\ne0.
\]
Because $S/Z$ centralizes the deck group $Z$, translation by every $v\in W$ lifts without changing the generator of $Z$. Comparing simple principal parts shows that all $c_w$ are equal, so the finite polar part is translation-invariant. For $0\ne v\in W$ the difference $f(x+v)-f(x)$ is then a polynomial of degree exactly $t$, with leading coefficient $(t+1)a_{t+1}v\ne0$. It must equal $h(x)^p-h(x)$ for some $h\in k(x)$, which is necessarily a polynomial. Such an expression is constant or has degree divisible by $p$, whereas $1\le t<p$. This contradiction excludes $Z=L$, and proves $\gamma=0$.
\end{proof}

\subsection{A global lemma}
When $\Aut(X)$ fixes no point, the case $Z=L$ above can be excluded for every jump.

\begin{lemma}\label{lem:ZL-global}
Let $X$ have genus $g>(p-1)^2$, put $G=\Aut(X)$, and let $Z\le G$ have order $p$. Assume that:
\begin{itemize}
\item $X/Z\cong\PP^1$;
\item $Z$ has a fixed point $P$ at which its lower ramification jump is at least $2$;
\item its jump at every other fixed point equals $1$.
\end{itemize}
Then $Z\lhd G$, and $G$ fixes $P$.
\end{lemma}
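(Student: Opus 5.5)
Normality of $Z$ will come from the Castelnuovo--Severi argument already used in Lemma~\ref{lem:Lnormal}, and the fixedness of $P$ from the fact that $P$ is distinguished among the fixed points of $Z$ by its ramification jump.

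First, suppose $Z^\sigma\ne Z$ for some $\sigma\in G$. Then $X/Z^\sigma\cong X/Z\cong\PP^1$, so $k(X)$ contains two distinct rational subfields $k(X)^Z$ and $k(X)^{Z^\sigma}$, each of index $p$. Their compositum is an intermediate field strictly containing a subfield of prime index $p$, so it equals $k(X)$. Castelnuovo--Severi then gives
\[
 g\le p\cdot0+p\cdot0+(p-1)(p-1)=(p-1)^2,
\]
which contradicts the hypothesis $g>(p-1)^2$. Hence $Z^\sigma=Z$ for every $\sigma\in G$, that is, $Z\lhd G$.

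Second, since $Z$ is normal, each $\sigma\in G$ permutes $\operatorname{Fix}(Z)$: if $z\in Z$ and $R\in\operatorname{Fix}(Z)$, then $z\sigma(R)=\sigma(\sigma^{-1}z\sigma)(R)=\sigma(R)$. Moreover $\sigma$ preserves the local ramification data of $Z$. The lower ramification filtration of the inertia group $Z$ at $\sigma(R)$ is the $\sigma$-conjugate of the filtration at $R$, because $\sigma$ induces an isomorphism of local rings carrying the valuation at $R$ to that at $\sigma(R)$. Concretely,
\[
 v_{\sigma(R)}\bigl(z(\pi')-\pi'\bigr)=v_R\bigl(z^\sigma(\pi)-\pi\bigr)
\]
for uniformizers $\pi'=\pi\circ\sigma^{-1}$. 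As $Z$ is normal, $z^\sigma\in Z$ is again a generator. Since $Z$ has order $p$, the jump does not depend on the choice of generator. So the jump of $Z$ at $\sigma(R)$ equals its jump at $R$. By hypothesis, $P$ is the only fixed point of $Z$ with jump at least $2$, so $\sigma(P)=P$ for all $\sigma\in G$, and $G$ fixes $P$.

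I expect the only step needing care to be the justification that the compositum of the two fixed fields is all of $k(X)$ and that Castelnuovo--Severi applies; both work exactly as in the proof of Lemma~\ref{lem:Lnormal}. The invariance of the jump under conjugation is standard (compare \cite[Chapter~IV]{Serre}).
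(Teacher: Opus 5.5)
Your proposal is correct and follows the paper's proof essentially step for step. Castelnuovo--Severi, applied to the two rational index-$p$ subfields whose compositum is $k(X)$, gives $Z\lhd G$; invariance of the lower jump under conjugation then makes $P$ the unique fixed point of $Z$ with jump at least $2$, so $G$ fixes it. The only cosmetic difference is that you argue directly on $\operatorname{Fix}(Z)\subset X$, while the paper first passes to the branch points of $X\to X/Z$ and then lifts back to $P$; your version is if anything slightly more direct.
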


\begin{proof}
If $Z^\sigma\ne Z$ for some $\sigma\in G$, then $k(X)^Z$ and $k(X)^{Z^\sigma}$ are distinct rational subfields of index $p$. Since $k(X)/k(X)^Z$ is cyclic of prime degree, their compositum is $k(X)$. Castelnuovo--Severi then gives $g\le(p-1)^2$, a contradiction. Hence $Z\lhd G$.

Then $G/Z$ acts faithfully on $X/Z\cong\PP^1$ and permutes the branch points of $X\to X/Z$. If $\sigma\in G$ and $Q\in\operatorname{Fix}(Z)$, then $\sigma Q\in\operatorname{Fix}(Z)$, and conjugation by $\sigma$ carries the ramification filtration of $Z$ at $Q$ to that at $\sigma Q$. Hence the jump is preserved.

It follows that $G/Z$ fixes the image of $P$, the unique branch point with jump at least $2$. Since $Z$ fixes $P$, the point $P$ is the only point of $X$ above its image. Therefore $G$ fixes $P$.
\end{proof}

\subsection{Proofs of the gap theorems}

\begin{proof}[Proof of Theorem~\ref{thm:gap}]
Since $q>p$ is a power of $p$, one has $q\ge p^2$. Suppose, towards a contradiction, that
\begin{equation}\label{eq:gap-strip}
 m<g<m+(p-1),
\end{equation}
and write $g=m+\delta$ with $1\le\delta\le p-2$. By Proposition~\ref{prop:local}(a), $X$ has $p$-rank zero. Since $q>g$, Theorem~\ref{thm:zerorank} applies. We show that none of its four families can satisfy the strict strip \eqref{eq:gap-strip}.

For the hyperelliptic family, write its parameter as $N=p^a$. Its Sylow $p$-subgroup has order $N$ and $g=(N-1)/2$. Any proper $p$-subgroup has order at most $N/p\le g$. Hence $q>g$ forces $q=N$, and then
\[
 g<\frac{p-1}{p}q=m,
\]
contrary to \eqref{eq:gap-strip}.

For the Hermitian curve with parameter $N=p^a$, the Sylow order is $N^3$ and $g=N(N-1)/2$. If a $p$-subgroup has order $q>g$, then $q\ge N^2$, because $N^2/p\le g$. Consequently
\[
 m=\frac{p-1}{p}q\ge\frac{p-1}{p}N^2\ge\frac{N(N-1)}2=g,
\]
again impossible.

For the Ree curve, put $N_0=3^r$ and $N=3N_0^2=3^{2r+1}$. Its genus is
\[
 g=\frac32N_0(N-1)(N+N_0+1).
\]

If $r=0$, then $N=3$, $g=15$, and every power of $3$ larger than $g$ is at least $27$, so $m\ge18>g$. Assume $r\ge1$.

Since $N-1>2\cdot3^{2r}$ and $N+N_0+1>3^{2r+1}$,
\[
 g\ge3^{5r+2},
\]
while $N_0+1<N/3$ gives
\[
 g<\frac32N_0\,N\left(N+\frac N3\right)
 =2\cdot3^{5r+2}.
\]
Thus every power of $3$ larger than $g$ is at least $3^{5r+3}$, and for such a subgroup order $q$,
\[
 m=\frac23q\ge2\cdot3^{5r+2}>g.
\]

Finally, for the cubic family $y^3=x^Q-x$ of Theorem~\ref{thm:zerorank}(4), one has $g=Q-1$ and Sylow order $Q$. Since $q>g$ and $q\le Q$, necessarily $q=Q$. Hence
\[
 g-m=\frac qp-1.
\]
As $q>p$ is a power of $p$, $q\ge p^2$, so $g-m\ge p-1$, again contradicting \eqref{eq:gap-strip}.

The strip is therefore empty. Under \eqref{eq:gap-bound}, either $g\le m$, in which case Theorem~\ref{thm:boundary} applies, or \eqref{eq:gap-strip} would hold, which we have just excluded. Since $q>p$, the two boundary families with $q=p$ cannot occur; the only nonclassical boundary case left is the characteristic-$3$ genus-six family.
\end{proof}

In particular, the computation above shows that the hyperelliptic, Hermitian and Ree families satisfy $g\le m$ for every $p$-subgroup of order $q>g$, while the cubic family satisfies $g-m=q/p-1$; we use this below.

\begin{proof}[Proof of Theorem~\ref{thm:refinedgap}]
Assume
\[
 m<g<m+\frac{p(p-1)}2,
 \qquad
 q\ge p^3.
\]
By Proposition~\ref{prop:local}(b), $X$ has $p$-rank zero. Since $q>g$, Theorem~\ref{thm:zerorank} applies. The hyperelliptic, Hermitian and Ree families satisfy $g\le m$. In the cubic family one has
\[
 g-m=\frac qp-1
 \ge p^2-1
 >
 \frac{p(p-1)}2
\]
because $q\ge p^3$. Hence none of the four zero-$p$-rank families lies in the displayed open interval. This proves the theorem.
\end{proof}

\begin{proof}[Proof of Theorem~\ref{thm:beyond}]
Write $g=m+\delta$ with $1\le\delta<\frac{p^2-1}2$. Then $q-g\ge p^2-\delta>0$.

If $\gamma=0$, Theorem~\ref{thm:zerorank} applies. The classical families have $g\le m$, and the cubic family has $g-m=q/p-1\ge p^2-1>\delta$. Both are impossible, so $\gamma>0$.

The first part of the proof of Proposition~\ref{prop:local} applies verbatim, because
\[
 m>\delta,\qquad \frac{q(p-1)}{p^2}\ge p(p-1)>\delta,\qquad \frac qp(p-1)\ge p^2(p-1)>2\delta,\qquad q>2\delta.
\]
It shows that $\bar g=0$, that there is a unique extra short orbit $\Delta$ of length $q/p$, that $\gamma=m$, $E_R=0$ and $E_P=2\delta$. Since $2\delta<p^2-1$, we again have $S_P^{(2)}=\dots=S_P^{(t+1)}=Z$ of order $p$, $S_P^{(t+2)}=1$ and $2\delta=t(p-1)$; now $1\le t\le p$, because $t(p-1)<(p+1)(p-1)$.

If $Z=L$, the Hurwitz computation in the proof of Proposition~\ref{prop:local} shows $X/Z\cong\PP^1$. The jump of $Z$ is $t+1\ge2$ at $P$ and $1$ at the points of $\Delta$, and $g>m\ge p^2(p-1)>(p-1)^2$. Lemma~\ref{lem:ZL-global} then shows that $G$ fixes $P$, contrary to the hypothesis.

Hence $Z\ne L$. As in Proposition~\ref{prop:local}, the cover $X\to X/L$ is weakly ramified and $g-\gamma=p(g_L-\gamma_L)$. Thus $p\mid\delta=\frac{t(p-1)}2$, so $p\mid t$ and $t=p$. Therefore
\[
 \delta=\frac{p(p-1)}2,\qquad g_L-\gamma_L=\frac\delta p=\frac{p-1}2,
\]
and all assertions follow.
\end{proof}

\begin{remark}\label{rem:endpoint}
The argument of Proposition~\ref{prop:local} in the case $Z=L$ breaks down when $t=p$, even locally.

Let $a\in k^*$, let $W=\{v\in k: a^pv^{p^2}+av=0\}$, which is an $\F_p$-plane, let $A_W=\prod_{w\in W}(x-w)$, and let $c\in k^*$. Consider the cover
\[
 y^p-y=a\,x^{p+1}+\frac{c}{A_W(x)}.
\]
\begin{itemize}
\item For $v\in W$ one has $A_W(x+v)=A_W(x)$ and
\[
 a\bigl((x+v)^{p+1}-x^{p+1}\bigr)\equiv\bigl((av)^{1/p}+av^p\bigr)x\equiv0\pmod{\wp(k(x))+k}.
\]
So the translations by $W$ lift, and the curve carries a $p$-group of order $p^3$ fixing the point $P$ above $\infty$.
\item Its genus and $p$-rank are
\[
 g=\frac{p(p-1)(2p+1)}2=m+\frac{p(p-1)}2,\qquad \gamma=m,
\]
and one checks that $S_P^{(2)}=\dots=S_P^{(p+1)}=L$.
\end{itemize}
This configuration has $Z=L$, so Lemma~\ref{lem:ZL-global} shows that its full automorphism group fixes $P$. Whether the configuration $Z\ne L$, $t=p$ of Theorem~\ref{thm:beyond} occurs with $\Aut(X)$ having no global fixed point remains open.
\end{remark}

\begin{proof}[Proof of Theorem~\ref{thm:firstlayer}]
Assume
\[
 g=m+(p-1),\qquad
 m=\frac{p-1}{p}q,\qquad q>p.
\]
If $q\ge p^3$, then $m+(p-1)<m+\frac{p(p-1)}2$, which contradicts Theorem~\ref{thm:refinedgap}. Hence $q=p^2$, and $g<q$.

Let $\gamma$ and $\bar\gamma$ be the $p$-ranks of $X$ and $X/S$. Deuring--Shafarevich gives $\bar\gamma=0$. Every short orbit other than the fixed point contributes at least $m$, while $2m>g$, so there is at most one such orbit. If $\gamma=0$, Theorem~\ref{thm:zerorank} applies because $q>g$. The hyperelliptic, Hermitian and Ree families all satisfy $g\le m$, whereas for the cubic family one has
\[
 g-m=\frac qp-1.
\]
Equality with $p-1$ would require $q=p^2\equiv2\pmod3$, but a square of an odd prime power is never congruent to $2$ modulo $3$. Hence $\gamma>0$.

Let $\Delta$ be the unique additional short orbit and let its stabilizer have order $p^s$, with $s\in\{1,2\}$. Then
\[
 \gamma=q-\frac{q}{p^s}.
\]
If $s=2$, then $\gamma=p^2-1=g$, and $\Delta$ is a second fixed point of $S$.

It remains to exclude $s=1$. Then $|\Delta|=q/p$ and $\gamma=m$. Let $R\in\Delta$. The same Hurwitz calculation as in \eqref{eq:gap-excess} gives
\[
 2(p-1)=2q\bar g+E_P+\frac qpE_R,
\]
where $\bar g=g(X/S)$. Since $q>2(p-1)$, one has $\bar g=0$. If $E_R>0$, then
\[
 \frac qpE_R\ge p(p-1)>2(p-1),
\]
a contradiction. Hence $E_R=0$ and $E_P=2(p-1)$. It follows that
\[
 Z:=S_P^{(2)}=S_P^{(3)}\cong C_p,
 \qquad S_P^{(4)}=1.
\]
The subgroup $Z$ is central in $S$. Put $L=S_R\cong C_p$.

Suppose first that $Z=L$. Then $Z$ fixes every point of $\Delta$. Hurwitz for $X\to X/Z$ gives genus zero. On $X/Z\cong\PP^1$, the group $S/Z$ acts by translations by an additive subgroup $W$ of order $q/p$. The cyclic Artin--Schreier cover $X\to X/Z$ has jump $3$ at infinity and jump $1$ at the finite branch points $W$. If $p=3$, a reduced Artin--Schreier jump cannot be divisible by $p$, so this is impossible. If $p>3$, a reduced equation has the form
\[
 y^p-y=ax^3+bx^2+cx+\sum_{w\in W}\frac{d}{x-w},
 \qquad a\ne0.
\]
The equality of the finite residues follows exactly as in the ordinary boundary argument: translations by $W$ lift and centralize the deck group. Hence the finite polar part is translation-invariant. For $0\ne v\in W$,
\[
 f(x+v)-f(x)=3avx^2+\text{lower terms}=h(x)^p-h(x).
\]
The right-hand side is either constant or has degree divisible by $p$, whereas the left-hand side is a nonconstant quadratic. This is impossible. Alternatively, Lemma~\ref{lem:ZL-global} applies.

Now suppose $Z\ne L$. Every ramification point of the degree-$p$ quotient $X\to X/L$ is weakly ramified: at $P$ one has $L\cap S_P^{(2)}=1$, and at the points of $\Delta$ this follows from $E_R=0$. If $g_L$ and $\gamma_L$ denote the genus and $p$-rank of $X/L$, Hurwitz and Deuring--Shafarevich give
\[
 g-1=p(g_L-1)+r(p-1),\qquad
 \gamma-1=p(\gamma_L-1)+r(p-1),
\]
where $r$ is the number of fixed points of $L$. Thus
\[
 g-\gamma=p(g_L-\gamma_L).
\]
But $g-\gamma=p-1$, a contradiction. Therefore $s=1$ cannot occur.

We have proved
\[
 q=p^2,\qquad g=\gamma=p^2-1.
\]
Let $P_0,P_\infty$ be the two fixed points of $S$. Hurwitz gives
\[
 0=2q\,g(X/S)+E_{P_0}+E_{P_\infty},
\]
so $X/S$ is rational and both excess terms vanish. Thus the cover is totally and weakly ramified at $P_0,P_\infty$ and nowhere else. Since
\[
 S_{P_0}^{(1)}/S_{P_0}^{(2)}=S,
\]
the group $S$ is elementary abelian, hence $S\cong C_p^2$.

We next show that $S$ is a Sylow $p$-subgroup of $G$. Suppose that $S$ is properly contained in a Sylow $p$-subgroup $S^*$. By the normalizer condition for finite $p$-groups,
\[
 S<N_{S^*}(S).
\]
The group $N_{S^*}(S)$ preserves the two-point set $\{P_0,P_\infty\}$, and since $p$ is odd it fixes both points individually. Hence $G_{P_0}$ contains a $p$-subgroup of order at least $p^3$. But
\[
 p^3>\frac{p}{p-1}(p^2-1)=p(p+1),
\]
so the strict theorem of Giulietti--Korchm\'aros applies. None of its three odd-characteristic classical families has genus $p^2-1$, a contradiction. Thus $S$ is Sylow.

Let $N=N_G(S)$. We claim that $N$ contains an element interchanging $P_0$ and $P_\infty$. Suppose not. Then $N$ fixes the two points separately. Since a point stabilizer has a unique Sylow $p$-subgroup, one has
\[
 G_{P_0}=G_{P_\infty}=N.
\]
Put $r=[G:N]$ and write $|N|=p^2h$, with $p\nmid h$. The points $P_0$ and $P_\infty$ lie in two distinct $G$-orbits: if some $g\in G$ sent $P_0$ to $P_\infty$, then $S^g$ and $S$ would both be Sylow $p$-subgroups of $G_{P_\infty}$, hence would coincide, forcing $g\in N$ and contradicting the assumption that $N$ does not interchange the two points. Both orbits have length $r$. Since $X$ is ordinary, the second ramification group at either point is trivial, and the different exponent is
\[
 d=p^2(h+1)-2.
\]
Hurwitz therefore gives
\[
 2(p^2-2)=2g-2
 \ge -2|G|+2rd
 =2r(p^2-2).
\]
Hence $r=1$. Then $N=G$ fixes both $P_0$ and $P_\infty$, contrary to the hypothesis on $G$. This proves the claim.

Choose a coordinate $x$ on $X/S\cong\PP^1$ with the images of $P_0,P_\infty$ equal to $0,\infty$. Elementary Artin--Schreier theory identifies the extension $k(X)/k(x)$ with a two-dimensional $\F_p$-subspace
\[
 V\subset k(x)/\wp(k(x)).
\]
Every nonzero class in $V$ corresponds to a degree-$p$ quotient of the $S$-cover. Since both branch points remain ramified, and the ramification is weak, the unique reduced representative of every nonzero class has a simple pole at both $0$ and $\infty$ and no other pole. Since $k$ is algebraically closed, constants are Artin--Schreier coboundaries. Hence
\[
 V\subset
 \left\{ax+\frac b x:\ a,b\in k\right\},
\]
and every nonzero vector has $a\ne0$ and $b\ne0$.

Let $\tau\in N$ interchange $P_0$ and $P_\infty$. Its induced automorphism of the quotient line is
\[
 x\longmapsto\frac{\lambda}{x}
\]
for some $\lambda\in k^*$. After replacing $x$ by a scalar multiple, we may assume $\lambda=1$. Pullback by $\tau$ preserves $V$ and acts by
\[
 J(a,b)=(b,a).
\]
This proves the invariant description in the statement.

It remains to classify the $J$-invariant planes. Put
\[
 E_+=\{(a,a):a\in k\},
 \qquad
 E_-=\{(a,-a):a\in k\}.
\]
Since $V$ is an $\F_p$-space preserved by the involution $J$ and $\frac12\in\F_p$, every $v\in V$ decomposes as $v=\frac{v+Jv}2+\frac{v-Jv}2$ with both summands in $V$. Thus
\[
 V=(V\cap E_+)\oplus(V\cap E_-).
\]
If one summand has dimension $2$, then, after replacing $x$ by $\iota x$ with $\iota^2=-1$ if necessary, we obtain
\[
 V=A\left(x+\frac1x\right)
\]
for a two-dimensional $\F_p$-subspace $A\subset k$. Otherwise both intersections have dimension $1$, and
\[
 V=
 \F_p\,a\left(x+\frac1x\right)
 \oplus
 \F_p\,b\left(x-\frac1x\right).
\]
The condition that no nonzero vector have a zero coefficient is equivalent to $a/b\notin\F_p$.

Conversely, let $V$ have either of these two forms and put
\[
 k(X)=k(x)\bigl(\wp^{-1}(V)\bigr).
\]
The local coefficient maps at $0$ and $\infty$ are injective on $V$, so the inertia group at each point is all of $C_p^2$, with lower jump $1$. Hurwitz and Deuring--Shafarevich give
\[
 g(X)=\gamma(X)=p^2-1.
\]
In case (1), $x\mapsto1/x$ lifts while fixing chosen Artin--Schreier generators; in case (2), it lifts by fixing the $+$ generator and negating the $-$ generator. Thus the subgroup generated by $S$ and this lift has no common fixed point.

Finally, the curve
\[
 y^{p^2}-y=x+\frac{c}{x}
\]
is a special case of (1): its degree-$p$ subextensions correspond to scalar multiples of the same two-pole function by elements of a two-dimensional $\F_p$-space. Hence the first layer is nonempty, but only for $q=p^2$. The gap theorem is therefore optimal uniformly in $q$; for every fixed $q\ge p^3$ the actual gap is strictly wider.
\end{proof}

\begin{proof}[Proof of Corollary~\ref{cor:firstlayer-moduli}]
First consider marked pairs. An isomorphism conjugating $S$ to $S'$ descends to an isomorphism of the quotient lines and must preserve the unordered branch pair. Hence, after the chosen normalizations, it is of the form
\[
 x\longmapsto\lambda x
 \quad\text{or}\quad
 x\longmapsto\frac{\lambda}{x}.
\]
On reduced Artin--Schreier classes these transformations act, respectively, by
\[
 (a,b)\longmapsto(\lambda^{-1}a,\lambda b)
\]
and by the composition with $J$. Therefore the displayed condition is necessary. Conversely, either transformation carries $V$ onto $V'$, and hence lifts to an isomorphism of the corresponding elementary abelian extensions conjugating $S$ onto $S'$.

Now let $\varphi:X\to X'$ be an arbitrary isomorphism. Since $S$ is a Sylow $p$-subgroup, $\varphi S\varphi^{-1}$ is a Sylow $p$-subgroup of $\Aut(X')$ and is therefore conjugate to $S'$. Postcomposing $\varphi$ with a suitable automorphism of $X'$ reduces to the marked case. Hence the same criterion classifies the underlying curves.

Finally, an automorphism normalizing $S$ descends to a projectivity of $X/S$ preserving the unordered branch pair, and its class in $k^*\rtimes C_2$ must stabilize the Artin--Schreier plane $V$. The kernel of the descent map is exactly the deck group $S$. Conversely, every projectivity in $\operatorname{Stab}_{k^*\rtimes C_2}(V)$ preserves the Artin--Schreier extension and lifts to an automorphism normalizing $S$. This proves the normalizer formula.
\end{proof}

\begin{remark}\label{rem:moduli}
\begin{enumerate}
\item The stabilizer in Corollary~\ref{cor:firstlayer-moduli} always contains $\langle D_{-1},J\rangle\cong C_2\times C_2$, because $-V=V$. Hence $|N_{\Aut(X)}(S)|\ge4p^2$.
\item The two normal forms of Theorem~\ref{thm:firstlayer} are never isomorphic to each other. Indeed, $D_\lambda$ and $J$ are $k$-linear: they carry the $k$-span of $V$, which is a line in case~(1) and all of $k^2$ in case~(2), to the $k$-span of the image.
\item In case~(1), distinct planes $A\ne A'$ give non-isomorphic curves. Indeed, $D_\lambda(E_+)\subseteq E_+$ forces $\lambda^2=1$, and then $D_\lambda V=V$ and $JD_\lambda V=V$ because $-A=A$.
\item In case~(2), suppose $D_\lambda V$ is again $J$-invariant. Since $JD_\lambda=D_{\lambda^{-1}}J$, the plane $V$ is then $D_{\lambda^{-2}}$-invariant. As $V$ is finite and spans $k^2$, the set of such $\lambda$ is finite. Hence each isomorphism class contains only finitely many normalized planes.
\end{enumerate}
Consequently each normal form contributes a two-parameter family of curves: case~(1) is parametrized by the planes $A\subset k$, and case~(2) by the pairs $(a,b)$.
\end{remark}

\section*{Statements and Declarations}
OpenAI's ChatGPT was used for language editing, literature searches,
and assistance with selected calculations and proof steps. The author
takes full responsibility for the mathematical content, references,
and final formulation of the manuscript.

\section*{Acknowledgments}

The author was partially supported by CNPq grant no.~302774/2025-4,
FAPESP grant no.~2024/00923-6, and FAEPEX grant no.~3485/25.

\end{document}